\newtheorem{definition}{Definition}
\newtheorem{theorem}[definition]{Theorem}
\newtheorem{corollary}[definition]{Corollary}
\newtheorem{lemma}[definition]{Lemma}
\newtheorem{example}{Example}
\newcommand{\C}{{\mathcal C}}
\newcommand{\E}{{\mathcal E}}
\newcommand{\I}{{\mathcal I}}
\newcommand{\N}{{\mathbb N}}
\renewcommand{\P}{{\mathcal P}}
\newcommand{\R}{{\mathcal R}}
\renewcommand{\S}{{\mathcal S}}
\newcommand{\Cbar}{{\overline C}}
\newcommand{\Ubar}{{\overline U}}
\newcommand{\interior}{\mathaccent"7017\relax}
\newcommand{\sm}{\setminus}
\newcommand{\im}{{\rm Im\>}}
\let\eps=\varepsilon
\let\sub=\subseteq
\let\subset=\subseteq
\let\supset=\supseteq
\let\phi=\varphi
\let\es=\emptyset
\newcommand\restr{\!\restriction\!}
\newcommand{\tst}{topological spanning tree}
\newcommand{\COMMENT}[1]{}
\newcommand{\noproof}{\unskip\nobreak\hfill\penalty50\hskip2em\hbox{}\nobreak\hfill%
       $\square$\parfillskip=0pt\finalhyphendemerits=0\par}
\newcommand{\emtext}[1]{\text{\em #1}}
\newenvironment{txteq}
  {\begin{equation}\begin{minipage}[c]{0.8\textwidth}\em}
  {\end{minipage}\ignorespacesafterend\end{equation}\ignorespacesafterend}
\newcommand{\assign}{
  \mathrel{\mathop{:}}=
}
\def\lowfwd #1#2#3{{\setbox0\hbox{$#1$}\setbox1\hbox{$E'\!$}
            \mathchoice
            {{\mathop{\kern0pt #1}\limits^{\kern#2pt\raise.#3ex
     \vbox to 0pt{\hbox{$\scriptscriptstyle\rightarrow$}\vss}}}}%
            {{\mathop{\kern0pt #1}\limits^{\kern#2pt\raise.#3ex
     \vbox to 0pt{\hbox{$\scriptscriptstyle\rightarrow$}\vss}}}}%
            {\ifdim\wd0<\wd1{\,\vec{#1}\,}\else
     {\mathop{\kern0pt #1}\limits^{\kern#2pt\raise.0ex
     \vbox to 0pt{\hbox{$\scriptscriptstyle\rightarrow$}\vss}}}\fi}%
            {{\vec{#1}}}%
            }}
\def\fwd #1#2{{\lowfwd{#1}{#2}{15}}}
\def\lowbkwd #1#2#3{{\mathop{\kern0pt #1}\limits^{\kern#2pt\raise.#3ex
     \vbox to 0pt{\hbox{$\scriptscriptstyle\leftarrow$}\vss}}}}
\def\vC{\kern-1pt\fwd C3\kern-.5pt}
\def\vCC{\kern-.7pt\fwd{\C}3\kern-.7pt}
\def\vd{\kern-1pt\lowfwd d2{10}\kern-1pt}
\def\vD{\kern-.7pt\fwd D3\kern-.5pt}
\def\ve{\kern-1pt\lowfwd e{1.5}1\kern-1pt}
\def\vf{\kern-1pt\lowfwd f{1.5}1\kern-1pt}
\def\fv{\kern-1pt\lowbkwd f{1.5}1\kern-1pt}
\def\ev{\kern-1pt\lowbkwd e{1.5}1\kern-1pt}
\def\veStar{{\mathop{\kern0pt e\lower1.5pt\hbox{${}^*$}}\limits^{\kern0pt
   \raise.02ex\vbox to 0pt{\hbox{$\scriptscriptstyle\rightarrow$}\vss}}}}
\def\eStarv{{\mathop{\kern0pt e\lower1.5pt\hbox{${}^*$}}\limits^{\kern0pt
   \raise.02ex\vbox to 0pt{\hbox{$\scriptscriptstyle\leftarrow$}\vss}}}}
\def\vedash{{\mathop{\kern0pt e\lower.5pt\hbox{${}
     \scriptstyle'$}}\limits^{\kern0pt\raise.02ex
     \vbox to 0pt{\hbox{$\scriptscriptstyle\rightarrow$}\vss}}}}
\def\vE{\kern-.7pt\fwd E3\kern-.7pt}
\def\vA{\kern-.7pt\fwd A3\kern-.7pt}
\def\vEE{\kern-.7pt\fwd{\E}3\kern-.7pt}
\def\vF{\kern-.7pt\fwd F3\kern-.7pt}
\def\vG{\kern-.7pt\fwd G3\kern-.7pt}
\def\vH{\kern-.5pt\fwd H3\kern-.5pt}
\def\vP{\kern-.7pt\fwd P3\kern-.6pt}
\def\specrel#1#2{\mathrel{\mathop{\kern0pt #1}\limits_{#2}}}
\title{The fundamental group of a locally finite\\ graph with ends}
\author{Reinhard Diestel and Philipp Spr\"ussel}
 \date{}
\begin{document}      

\maketitle

\begin{abstract}
We characterize the fundamental group of a locally finite graph $G$ with ends combinatorially, as a group of infinite words. Our characterization gives rise to a canonical embedding of this group in the inverse limit of the free groups $\pi_1(G')$ with $G'\sub G$ finite.
   \end{abstract}

\section{Introduction}

The purpose of this paper is to give a combinatorial characterization of the fundamental group of the compact space $|G|$ formed by a locally finite graph~$G$---such as a Cayley graph of a finitely generated group---together with its ends. The space~$|G|$, known as the \emph{Freudenthal compactification\/} of~$G$, is the standard setting in which locally finite graphs are studied from a topological point of view~\cite{DiestelBook05}. However, no combinatorial characterization of its fundamental group has so far been known.

When $G$ is finite, $\pi_1(|G|) = \pi_1(G)$ is the free group on the set of (arbitrarily oriented) \emph{chords} of a spanning tree of~$G$, those edges of $G$ that are not edges of the tree. When $G$ is infinite and there are infinitely many chords, then $\pi_1(|G|)$ is not a free group. However, we show that it embeds canonically as a subgroup in an inverse limit $F^*$ of free groups: those on the finite sets of (oriented) chords of any \emph{topological} spanning tree~$T$, one whose closure in $|G|$ contains no non-trivial loop.

More precisely, we characterize $\pi_1(|G|)$ in terms of subgroup embeddings
 $$\pi_1(|G|)\to F_\infty \to F^*,$$
where $F_\infty$ is a group formed by `reduced' infinite words of chords of~$T$. These words arise as the traces of loops in~$|G|$, so in general they will have arbitrary countable order types. Unlike for finite graphs, many natural homotopies between such loops do not proceed by retracting passes through chords one by one. (We give a simple example in Section~\ref{wordsec}.) Nevertheless, we show that to generate the homotopy classes of loops in $|G|$ from suitable representatives we only need homotopies that do retract passes through chords one at a time, in some linear order. As a consequence, we are again able to define reduction of words as a linear sequence of steps each cancelling one pair of letters, although the order in which the steps are performed may now have any countable order type (such as that of the rationals).

The fact that our sequences of reduction steps are not well-ordered will make it difficult or impossible to handle reductions in terms of their definition. However we show that reduction of infinite words can be characterized in terms of the reductions they induce on all their finite subwords. A formalization of this observation yields the embedding $F_\infty \to F^*$.

An end of $G$ is \emph{trivial} if it has a contractible neighbourhood. If every end of $G$ is trivial, then $|G|$ is homotopy equivalent to a finite graph. If $G$ has exactly one non-trivial end, then $|G|$ is homotopy equivalent to the Hawaiian Earring. Its fundamental group was studied by Higman~\cite{HigmanFreeProd} and Cannon~\& Conner~\cite{CannonConnerER}. Our characterization of $\pi_1(|G|)$ is equivalent to their combinatorial description of this group when $G$ has only one non-trivial end.

Our motivation for this paper is primarily that that the fundamental group of such a classical space as $|G|$ ought to be understood. Our characterization achieves this aim while remaining as close to the standard representation of $\pi_1(G)$ for finite $G$ as possible; indeed we shall see that the only added complication, the fact that both words and reductions may now have arbitrary (countable) order type, is necessary. 

Our characterization of $\pi_1(|G|)$ already has a first substantial application. In~\cite{Hom1} we use it to show that, in contrast to finite graphs, the first singular homology of~$|G|$ differs essentially from the \emph{topological cycle space} $\C(G)$ of~$G$. It has been amply demonstrated in recent years---see e.g.\ \cite{LocFinTutte,Duality,Partition,LocFinMacLane,AgelosFleisch,Arboricity}, or \cite{RDsBanffSurvey} for a survey---that the relatively new notion of~$\C(G)$, rather than the usual finitary cycle space, is needed to describe the homology of a locally finite graph. But it had remained an open problem whether $\C(G)$ was a truly new object, or just the first singular homology group of~$|G|$ in a new guise. This question was answered positively in~\cite{Hom1}: $\C(G)$~differs essentially from~$H_1(|G|)$, and Theorem~\ref{pi1thm} below is the cornerstone of the proof.

This paper is organized as follows. We begin with a section collecting together the definitions and known background that we need; some elementary general lemmas are also included here. In Section~\ref{wordsec} we introduce our group $F_\infty$ of infinite words, and show how it embeds in the inverse limit of the free groups on its finite subsets of letters. In Section~\ref{sec:pi1toFinf} we embed $\pi_1(|G|)$ in~$F_\infty$, leaving the proof of the main lemma to Section~\ref{sec:injectivity}.

\section{Terminology and basic facts}

In this section we briefly run through any non-standard terminology we use. We also list a few elementary lemmas that we shall need, and use freely, later on.
Some of these are given with references,
the others are proved for the sake of completeness. The reader is encouraged to skim this section for definitions, but to turn to the proofs of the lemmas only as needed.

For graphs we use the terminology of~\cite{DiestelBook05}, for topology that of Hatcher~\cite{Hatcher}. Our graphs may have multiple edges but no loops. This said, we shall from now on use the terms \emph{path} and \emph{loop} topologically, for continuous but not necessarily injective maps $\sigma\colon [0,1]\to X$, where $X$ is any topological space. If $\sigma$ is a loop, it is \emph{based at} the point~$\sigma(0) = \sigma(1)$. We write $\sigma^-$ for the path $s\mapsto \sigma(1-s)$. The image of an injective path is an \emph{arc} in~$X$, the image of an `injective loop' (a subspace of $X$ homeomorphic to~$S^1$) is a \emph{circle} in~$X$.

\begin{lemma}[\cite{ElemTop}]\label{arc}
The image of a topological path with distinct endpoints $x,y$ in a Hausdorff space $X$ contains an arc in $X$ between $x$ and~$y$.
\end{lemma}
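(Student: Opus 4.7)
The plan is to extract an injective continuous $\tau\colon [0,1] \to X$ with image in $\sigma([0,1])$ sending $0$ to $x$ and $1$ to $y$, by a Zorn's lemma argument applied to suitable ``admissible'' closed subsets of $[0,1]$. I expect the Zorn chain-condition to be the main obstacle.

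First I would normalise: since $X$ is Hausdorff, $\sigma^{-1}(x)$ and $\sigma^{-1}(y)$ are closed in $[0,1]$. Setting $s_0 \assign \sup \sigma^{-1}(x)$ and $s_1 \assign \min\{t \ge s_0 : \sigma(t) = y\}$, both are attained with $s_0 < s_1$, so after restricting $\sigma$ to $[s_0,s_1]$ and reparametrising I may assume $\sigma^{-1}(x) = \{0\}$ and $\sigma^{-1}(y) = \{1\}$.

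Next I would consider the family $\F$ of closed $C \sub [0,1]$ with $\{0,1\} \sub C$ satisfying $\sigma(a) = \sigma(b)$ for every connected component $(a,b)$ of $[0,1] \sm C$, ordered by reverse inclusion; note $[0,1] \in \F$. To verify the Zorn chain condition, take a descending chain $(C_\alpha)$ and let $C = \bigcap_\alpha C_\alpha$. Closedness and $\{0,1\} \sub C$ are immediate; the matching condition is the key point. Given a component $(a,b)$ of $[0,1]\sm C$ and any $\eps > 0$, the compact set $[a+\eps, b-\eps]$ is disjoint from $C$, so by the finite intersection property some $C_\alpha$ already avoids it. Then $[a+\eps, b-\eps]$ lies in a single component $(a_\alpha, b_\alpha)$ of $[0,1]\sm C_\alpha$, whose endpoints (being in $C_\alpha$) satisfy $a \le a_\alpha \le a+\eps$, $b-\eps \le b_\alpha \le b$, and $\sigma(a_\alpha) = \sigma(b_\alpha)$; letting $\eps \to 0$ and using continuity of $\sigma$, one obtains $\sigma(a) = \sigma(b)$. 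So $C \in \F$, and Zorn's lemma yields a minimal $C \in \F$.

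Finally I would form the quotient $Q$ of $[0,1]$ obtained by collapsing each closed gap $[a,b]$ of $C$ to a single point. Since the equivalence classes are closed intervals and the relation is closed in $[0,1]^2$, $Q$ is a compact, connected, metrisable, linearly ordered Hausdorff space with endpoints $[0]$ and $[1]$, hence $Q \cong [0,1]$. The rule $\tau([t]) = \sigma(t)$ for $t \in C$ and $\tau([a,b]) = \sigma(a) = \sigma(b)$ on each collapsed gap defines a continuous $\tau\colon Q \to X$ with image in $\sigma([0,1])$ and $\tau([0]) = x$, $\tau([1]) = y$. Injectivity follows from minimality: a failure would give $s' < t'$ in $C$ with $\sigma(s') = \sigma(t')$, and then $C \sm (s', t') \in \F$ (its enlarged gap around $(s',t')$ still has matching endpoints, as its boundary either coincides with $s',t'$ or lies in an adjacent gap of $C$ whose endpoints are already $\sigma$-identified with $s', t'$) would strictly refine $C$. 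Thus $\tau$ gives the desired arc.
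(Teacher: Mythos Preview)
The paper does not prove this lemma: it is quoted from an external reference (marked \texttt{[ElemTop]}) without argument. So there is no ``paper's own proof'' to compare against; I can only assess your argument on its own terms.

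Your proof is correct and is essentially the classical argument. A few remarks on points where your write-up is looser than it needs to be:

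\textbf{The injectivity step.} Your parenthetical about the ``enlarged gap around $(s',t')$'' possibly extending into an adjacent gap of $C$ is unnecessary. Since you chose $s',t'\in C$ and $s',t'\notin (s',t')$, both points lie in $C\sm(s',t')$. Hence the component of $[0,1]\sm(C\sm(s',t'))$ created by the deletion is exactly $(s',t')$, with endpoints $s',t'$ and $\sigma(s')=\sigma(t')$ by assumption. What does need one line of justification is that $C\sm(s',t')$ is \emph{strictly} smaller than $C$: if $C\cap(s',t')=\es$ then $(s',t')$ would already be a gap of $C$, so $[s',t']$ would be a single equivalence class, contradicting $[s']\neq[t']$.

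\textbf{The quotient $Q\cong[0,1]$.} You assert this by listing properties of $Q$. This is fine as a citation of a standard fact, but note that it implicitly uses minimality of $C$: without it, a point of $C$ could be simultaneously the right endpoint of one gap and the left endpoint of another, in which case collapsing closed gaps would merge them and the decomposition into closed intervals would not be a partition. Minimality rules this out (such an isolated point could be removed to get a smaller member of $\F$), and also guarantees $[0]\neq[1]$ (else $C=\{0,1\}$ forces $\sigma(0)=\sigma(1)$, contradicting $x\neq y$).

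\textbf{The normalisation.} Your first paragraph is not actually needed for the argument you give: nowhere later do you use $\sigma^{-1}(x)=\{0\}$ or $\sigma^{-1}(y)=\{1\}$. The hypothesis $x\neq y$ alone suffices to rule out $C=\{0,1\}$ and to guarantee $Q$ has more than one point.
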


All homotopies between paths that we consider are relative to the first and last point of their domain, usually~$\{0,1\}$. We shall often construct homotopies between paths segment by segment. The following lemma enables us to combine certain homotopies defined separately on infinitely many segments.

\begin{lemma}
   \label{infhomotopies}
  Let $\alpha,\beta$ be paths in a topological space~$X$. Assume that there is a sequence $(a_0,b_0),(a_1,b_1),\dotsc$ of disjoint subintervals of $[0,1]$ such that $\alpha$ and $\beta$ conincide on $[0,1]\sm\bigcup_n(a_n,b_n)$, while each segment $\alpha \restr [a_n,b_n]$ is homotopic in $\alpha([a_n,b_n])\cup\beta([a_n,b_n])$ to $\beta \restr [a_n,b_n]$. Then $\alpha$ and $\beta$ are homotopic.
\end{lemma}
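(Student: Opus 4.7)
My plan is to assemble $H\colon [0,1]^2\to X$ from the given segmental homotopies. For each $n$ I would fix a homotopy $H_n\colon [a_n,b_n]\times[0,1]\to \alpha([a_n,b_n])\cup\beta([a_n,b_n])$ between $\alpha\restr[a_n,b_n]$ and $\beta\restr[a_n,b_n]$ that is relative to $\{a_n,b_n\}$, and set $H(t,s):=H_n(t,s)$ when $t\in(a_n,b_n)$ and $H(t,s):=\alpha(t)=\beta(t)$ otherwise. The boundary conditions $H(\cdot,0)=\alpha$, $H(\cdot,1)=\beta$, $H(0,\cdot)\equiv\alpha(0)$, $H(1,\cdot)\equiv\alpha(1)$ are then immediate (using $0,1\notin\bigcup_n(a_n,b_n)$), and on the open set $\bigcup_n (a_n,b_n)\times[0,1]$, where $H$ locally agrees with a single $H_n$, continuity is automatic.

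The only thing that requires work is continuity of $H$ at pairs $(t,s)$ with $t$ in the closed set $J:=[0,1]\sm\bigcup_n(a_n,b_n)$; this will be the main obstacle. Given an open neighbourhood $U$ of $H(t,s)=\alpha(t)$, I would use the continuity of $\alpha$ and $\beta$ to pick an open interval $V''\ni t$ whose closure satisfies $\alpha(\overline{V''})\cup\beta(\overline{V''})\sub U$. The crucial observation will be that, because the intervals $(a_n,b_n)$ are pairwise disjoint and none of them contains $t$, at most two of them---one straddling each boundary point of $V''$, say $I_L=(a_L,b_L)$ and $I_R=(a_R,b_R)$---can meet $V''$ without being contained in $\overline{V''}$. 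For every other $(a_n,b_n)$ that meets $V''$ one has $[a_n,b_n]\sub\overline{V''}$, so the image restriction on $H_n$ gives
$$H([a_n,b_n]\times[0,1])\sub \alpha([a_n,b_n])\cup\beta([a_n,b_n])\sub U.$$

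It then remains to cope with the two straddling intervals. For $I_L=(a_L,b_L)$, since $t\notin I_L$ lies to the right of $\inf V''\in I_L$, one has $t\ge b_L$. If $t>b_L$, I would simply shrink $V''$ to a sub-interval $V'\ni t$ disjoint from $[a_L,b_L]$, thereby removing $I_L$ from consideration. If $t=b_L$, I would instead invoke the continuity of $H_L$ at the point $(b_L,s)$---noting that $H_L(b_L,s)=\alpha(b_L)=\alpha(t)\in U$ by the rel-endpoint property of $H_L$---to obtain a product neighbourhood $V_L\times W_L\sub [a_L,b_L]\times[0,1]$ of $(b_L,s)$ that $H_L$ maps into $U$, and then shrink $V''$ so that its intersection with $[a_L,b_L]$ lies in $V_L$. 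Performing the analogous step for $I_R$ produces a product neighbourhood $V\times W$ of $(t,s)$ on which $H$ takes values in $U$. The image restriction on each $H_n$ is essential throughout: without it, homotopies on small intervals accumulating at $t$ could drift arbitrarily far from $\alpha(t)$ in $X$, and $H$ would fail to be continuous at points of $J$.
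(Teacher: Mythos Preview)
Your proof is correct and follows essentially the same approach as the paper: define $H$ piecewise from the segmental homotopies, and use the image constraint $H_n([a_n,b_n]\times[0,1])\subset\alpha([a_n,b_n])\cup\beta([a_n,b_n])$ to control continuity at accumulation points. The organization differs only slightly: the paper works one side at a time and, in the non-trivial case, directly chooses the endpoint $x-\eps$ to lie in $[0,1]\setminus D$ (so that every $(a_n,b_n)$ meeting $(x-\eps,x]$ is entirely contained in it), whereas you take a full neighbourhood $V''$ and then explicitly dispose of the at-most-two straddling intervals $I_L,I_R$; both arguments reduce to the same key observation.
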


\begin{proof}
  Write $D\assign\bigcup_n(a_n,b_n)$. For every $n\in\N$ let $F^n={(f^n_t)}_{t\in[0,1]}$ be a homotopy in $\alpha([a_n,b_n])\cup\beta([a_n,b_n])$ between $\alpha \restr [a_n,b_n]$ and $\beta \restr [a_n,b_n]$. We define the desired homotopy $F={(f_t)}_{t\in[0,1]}$ between $\alpha$ and $\beta$ as
  \begin{equation*}
    f_t(x)\assign
    \begin{cases}
      f^n_t(x) & \text{if }x\in(a_n,b_n),\\
      \alpha(x)=\beta(x) & \text{if }x\in[0,1]\sm D.
    \end{cases}
  \end{equation*}
  Clearly, $f_0=\alpha$ and $f_1=\beta$. It remains to prove that $F$ is continuous.

Let $x,t\in[0,1]$ and a neighbourhood $U$ of $F(x,t)$ in $X$ be given. We find an $\eps>0$ so that $F((x-\eps,x],(t-\eps,t+\eps)) \subset U$; the case $F([x,x+\eps),(t-\eps,t+\eps)) \subset U$ is analogous. Suppose first that there is an $\eps_0>0$ such that $(x-\eps_0,x)\subset D$. As the intervals $(a_i,b_i)$ are disjoint, this means that $(x-\eps,x)\subset(a_n,b_n)$ for some~$n$. Then $(x-\eps_0,x]\subset[a_n,b_n]$, and hence $F \restr (x-\eps_0,x]\times[0,1] = F^n \restr (x-\eps_0,x]\times[0,1]$. As $F^n$ is continuous, there is an $\eps<\eps_0$ with $F((x-\eps,x],(t-\eps,t+\eps)) \subset U$.

Now suppose that for every $\eps > 0$ the interval $(x-\eps,x)$ meets $[0,1]\sm D$. Then also $x\in [0,1]\sm D$, and hence $F(x,t)=\alpha(x)=\beta(x)$. Pick $\eps>0$ with $x-\eps\in[0,1]\sm D$ small enough that both $\alpha$ and $\beta$ map $[x-\eps,x]$ into $U$. Then $F((x-\eps,x],(t-\eps,x+\eps))\subset U$. Indeed, for every $x'\in(x-\eps,x]\sm D$ and every $t'\in(t-\eps,t+\eps)$ we have $F(x',t')=\alpha(x')=\beta(x')\in U$. On the other hand, for every $x'\in(x-\eps,x]\cap D$ and $t'\in(t-\eps,t+\eps)$ we have $x'\in(a_n,b_n)$ for some $n$. As $x$ and $x-\eps$ lie in $[0,1]\sm D$, we have $(a_n,b_n)\subset(x-\eps,x)$ and hence $F(x',t')=F^n(x',t')\in\alpha([a_n,b_n])\cup\beta([a_n,b_n])\subset U$.
\end{proof}

Locally finite CW-complexes can be compactified by adding their \emph{ends}. This compactification can be defined, without reference to the complex, for any connected, locally connected, locally compact topological space $X$ with a countable basis. Very briefly, an \emph{end} of $X$ is an equivalence class of sequences $U_1\supseteq U_2\supseteq \ldots$ of connected non-empty open sets with compact boundaries and an empty overall intersection of closures, $\bigcap_n\overline U_n = \emptyset$, where two such sequences $(U_n)$ and $(V_m)$ are \emph{equivalent} if every $U_n$ contains all sufficiently late $V_m$ and vice versa. This end is said to \emph{live in} each of the sets~$U_n$, and every $U_n$ together with all the ends that live in it is \emph{open} in the space whose point set is the union of $X$ with the set $\Omega(X)$ of its ends and whose topology is generated by these open sets and those of~$X$. This is a compact space, the \emph{Freudenthal compactification} of~$X$ \cite{Freudenthal31, Freudenthal42}. More topological background on this can be found in~\cite{AbelsStrantzalos, BauesQuintero, HughesRanicki}; for applications to groups see e.g.~\cite{AbelsStrantzalos, RoggiEndsI, RoggiEndsII, ThomassenWoess, woessBook}.

For graphs, ends and the Freudenthal compactification are more usually defined combinatorially~\cite{DiestelBook05, halin64, jung71}, as follows. Let $G$ be a connected locally finite graph. A 1-way infinite graph-theoretical path in $G$ is a \emph{ray}. Two rays are \emph{equivalent} if no finite set of vertices separates them in~$G$, and the resulting equivalence classes are the \emph{ends} of~$G$. It is not hard to see~\cite{Ends} that this combinatorial definition of an end coincides with the topological one given earlier for locally finite complexes. We write $\Omega = \Omega(G)$ for the set of ends of~$G$. The Freudenthal compactification of~$G$ is now denoted by~$|G|$; its topology is generated by the open sets of $G$ itself (as a 1-complex) and the sets $\hat C (S,\omega)$ defined for every end $\omega$ and every finite set $S$ of vertices, as follows. $C(S,\omega) =: C$ is the unique component of $G-S$ in which $\omega$ \emph{lives} (i.e., in which every ray of~$\omega$ has a \emph{tail}, or subray), and $\hat C (S,\omega)$ is the union of $C$ with the set of all the ends of $G$ that live in $C$ and the (finitely many) open edges between $S$ and~$C$.%
   \footnote{The definition given in~\cite{DiestelBook05} is slightly different, but equivalent to the simpler definition given here when $G$ is locally finite. Generalizations are studied in~\cite{KroenEnds, ThomassenVellaContinua}.}
Note that the boundary of $\hat C (S,\omega)$ in $|G|$ is a subset of~$S$, that every ray converges to the end containing it, and that the set of ends is totally disconnected.

Many topological spaces that are not normally associated with graphs can be expressed as a graph with ends, or as a subspace thereof. The Hawaiian Earring, for example, is homeomorphic to the subspace of the infinite grid that consists of all the vertical double rays and its end. Since the subspaces of graphs with ends form a richer class than the spaces of graphs with ends themselves, we prove all our results not just for  $|G|$ but more generally for subspaces $H$ of~$|G|$. However, the reader will lose little by thinking of $H$ as the entire space~$|G|$. The subspaces we shall be considering will be \emph{standard} subspaces of~$|G|$: connected subspaces that are closed in $|G|$ and contain every edge of which they contain an inner point.

We shall frequently use the following non-trivial lemma.

\begin{lemma}[\cite{TST}]\label{arccntd}
  For a locally finite graph $G$, every closed, connected subspace of $|G|$ is arc-connected.
\end{lemma}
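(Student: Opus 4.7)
The plan is to reduce the statement to path-connectedness of~$H$ and then invoke Lemma~\ref{arc}. Since $|G|$ is Hausdorff, a topological path from $x$ to $y$ inside~$H$ has an image that lies in~$H$ and, by Lemma~\ref{arc}, contains an arc from $x$ to~$y$. So it suffices to show that any two points $x,y$ of~$H$ are joined by a path in~$H$.

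To prove this I would fix $x_0\in H$ and show that the set $A\sub H$ of points reachable from~$x_0$ by a path in~$H$ is both open and closed in~$H$; since $H$ is connected and $x_0\in A$, this forces $A=H$.

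For openness I need local path-connectedness of $H$ at each point. At interior edge points and at vertices this is immediate, since $|G|$ is there locally an open interval or a finite star and $H$ contains every edge of which it contains an inner point. The essential case is an end $\omega\in H$, where I would show that every $z\in H$ lying in a sufficiently small basic neighbourhood $\hat C(S,\omega)$ is joined to~$\omega$ by a path inside $H\cap\hat C(S,\omega)$. For this I would take a nested exhaustion $S=S_0\sub S_1\sub\dots$ of finite vertex sets of $C(S,\omega)$ and, at each level~$n$, contract every region $\hat C(S_n,\omega')$ with $\omega'$ living in $C(S_n,\omega)$ to a single point to obtain a finite graph-like quotient $Q_n$ of $\hat C(S,\omega)$. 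The image of $H\cap\hat C(S,\omega)$ in~$Q_n$ is closed and connected, hence contains a finite walk joining the image of~$z$ to the image of~$\omega$; lifting these walks back to paths in~$H$ whose diameters shrink as $n\to\infty$ and gluing them in the style of Lemma~\ref{infhomotopies} produces a continuous path in~$H$ from~$z$ that terminates at~$\omega$.

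For closedness, if $y\in\overline A\cap H$ and $y_n\to y$ with $y_n\in A$, local path-connectedness at~$y$ places, for large~$n$, the point $y_n$ inside a neighbourhood of~$y$ in which it is joined to~$y$ by a path in~$H$; concatenation then yields $y\in A$. The main obstacle is the local path-connectedness at ends: one must arrange the lifted walks in the successive quotients so that they converge continuously to~$\omega$, which requires exploiting the metrizability of~$|G|$ and a diagonal control over the diameters of the lifted segments as they approach~$\omega$.
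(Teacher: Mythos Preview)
The paper does not give a proof of this lemma: it is quoted from~\cite{TST} and explicitly flagged as ``non-trivial''. So there is no in-paper argument to compare against; I can only assess your outline on its own terms.

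Your overall architecture---reduce arc-connectedness to path-connectedness via Lemma~\ref{arc}, then prove path-connectedness by a clopen argument resting on local path-connectedness of~$H$---is reasonable. But two points need attention.

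First, a slip: you assert that ``$H$ contains every edge of which it contains an inner point''. That is the \emph{definition} of a standard subspace, not a consequence of $H$ being closed and connected. (Take a triangle and let $H$ be two half-edges meeting at a vertex.) For the local argument at inner edge points and vertices this is easily repaired---a closed connected subset of a finite star is still locally path-connected---but you should not invoke a hypothesis you do not have.

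Second, and more seriously, the step at ends is where the whole difficulty of the lemma lives, and your sketch does not resolve it. Lifting a finite walk from the quotient $Q_n$ back to a path in~$H$ already presupposes that the relevant pieces of $H$ between contracted regions are path-connected, which is essentially what you are trying to prove. And even granting the lifts, the concatenation ``in the style of Lemma~\ref{infhomotopies}'' is not automatic: Lemma~\ref{infhomotopies} is about homotopies, not about gluing countably many paths end to end with a prescribed limit point; what you need is closer to an inverse-limit or nested-sequence argument showing that the tails of your path genuinely converge to~$\omega$ inside~$H$. You yourself identify this as ``the main obstacle'' and defer it to unspecified ``diagonal control''. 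That is precisely the content of the lemma, so as written your proposal is a plan rather than a proof.
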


A~\emph{topological tree in~$|G|$} is an arc-connected standard subspace of~$|G|$ that contains no circle. Note that the subgraph that such a space induces in $G = |G|\sm\Omega$ need not be connected: its arc-connectedness may hinge on the ends it contains. A~\emph{chord} of a topological tree $T$ is any edge of $G$ that has both its endvertices in $T$ but does not itself lie in~$T$.

\begin{lemma}
   \label{locarccon}
   Topological trees in $|G|$ are locally arc-connected.
\end{lemma}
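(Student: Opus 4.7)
The plan is to establish local arc-connectedness by case analysis on the type of point $x\in T$: either $x$ is an interior point of an edge, a vertex, or an end. For each case I exhibit an arc-connected open neighborhood of $x$ inside a prescribed open neighborhood $U$ of $x$ in $T$.

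\emph{Edge case.} If $x$ lies in the interior of an edge $e$, then $e\sub T$ by the standard-subspace property, and any sufficiently small open subinterval of $e$ about $x$ provides the required arc-connected open $T$-neighborhood.

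\emph{Vertex case.} If $x=v$ is a vertex, a basic $|G|$-neighborhood of $v$ consists of $v$ together with half-open segments $[v,v+\eps_e)$ along each of the (by local finiteness of $G$, finitely many) edges $e$ at $v$. Intersecting with $T$ retains only those segments whose underlying edge lies in $T$, yielding a finite star, which is arc-connected.

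\emph{End case.} This is where the main work lies. Given an open neighborhood $U$ of $\omega$ in $T$, pick a finite $S\sub V(G)$ with $\hat C(S,\omega)\cap T\sub U$, and let $V$ be the arc-component of $\omega$ in $T\cap \hat C(S,\omega)$. By construction $V\sub U$ is arc-connected, so the remaining task is to prove that $V$ is open in $T$. At any $y\in V$ that is an interior edge point or a vertex, the first two cases supply an arc-connected open $T$-neighborhood $W$ of $y$ inside $T\cap \hat C(S,\omega)$, and since $W$ meets $V$ at $y$ and is arc-connected, $W\sub V$.

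The main obstacle is the openness of $V$ at points $y=\omega'$ that are themselves ends lying in $V$. Here I would use that $T$ is circle-free to conclude that any two points of $T$ are joined by a \emph{unique} arc in $T$: two distinct arcs between the same endpoints would, via Lemma~\ref{arc}, yield a circle in $T$, contradicting the definition of a topological tree. Together with local finiteness of $G$, which bounds the number of edges of $T$ crossing the finite boundary $S$, this uniqueness implies that only finitely many arcs of $T$ issued from $\omega'$ can leave $\hat C(S,\omega)$. I would then choose a finite $S'\supseteq S$ large enough to separate $\omega'$ in $G$ from the initial segments of these finitely many exiting arcs, so that every point of $\hat C(S',\omega')\cap T$ is joined to $\omega'$ by an arc lying entirely inside $\hat C(S,\omega)\cap T$. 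Concatenating with the unique arc in $V$ from $\omega'$ to $\omega$ then places $\hat C(S',\omega')\cap T$ inside $V$, as required. Along the way I expect to apply Lemma~\ref{arccntd} to the closed connected subspace $T\cap \overline{\hat C(S,\omega)}$ (and to its component of $\omega$) to guarantee that the arcs witnessing these arc-connections can be found inside the intended closed subspaces.
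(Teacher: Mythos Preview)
Your edge and vertex cases match the paper's (which dismisses them as trivial). The divergence is in the end case, and there your argument has a genuine gap.

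The paper proceeds differently: instead of taking the arc-component $V$ of $\omega$ in $T\cap\hat C(S,\omega)$ and showing it is open, it takes the \emph{connected component} $C_\omega$ of $\omega$ in $T\setminus S$, observes it is open in $T$ (because $T\setminus S$ has only finitely many components), and then proves that $C_\omega$ is arc-connected. For this last step it uses that $T\setminus S$ has only finitely many arc-components (removing a finite-degree vertex from an arc-connected standard subspace creates at most degree-many arc-components). If $C_\omega$ were not arc-connected, some arc-component $C$ of it would fail to be closed in $C_\omega$; the closure $\bar C$ in $|G|$ is then closed and connected, hence arc-connected by Lemma~\ref{arccntd}, and an arc in $\bar C$ from $C$ to $C_\omega\setminus C$ can be chosen to avoid $S$ (since $\bar C\setminus(S\cup C)\sub\Omega$ is totally disconnected), contradicting that $C$ was an arc-component.

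Your approach, by contrast, tries to show $V$ is open at each end $\omega'\in V$. But this amounts to proving local arc-connectedness at $\omega'$, which is exactly the statement in question, and your sketch does not escape this circularity. The claim that ``only finitely many arcs of $T$ issued from $\omega'$ can leave $\hat C(S,\omega)$'' is imprecise: there are uncountably many such arcs, and what is true is only that each must pass through one of the finitely many vertices of $S$. The step ``choose $S'$ large enough to separate $\omega'$ from the initial segments of these finitely many exiting arcs'' does not make sense as written, since those initial segments contain $\omega'$. What you would actually need is that for each $s\in S\cap T$ the set $\{z\in T : s\in\omega'Tz\}$ is closed and misses a neighbourhood of $\omega'$; verifying closedness of this set at its end points again requires the lemma. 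Finally, your proposed application of Lemma~\ref{arccntd} to $T\cap\overline{\hat C(S,\omega)}$ does not go through as stated, since that intersection need not be connected. The missing idea is precisely the paper's: work with the connected component, invoke Lemma~\ref{arccntd} on the closure of an arc-component, and use that ends are totally disconnected to keep the resulting arc away from~$S$.
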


\begin{proof}
Let $T$ be a topological tree in~$|G|$. Let $D$ be any open subset of~$T$, and $x\in D$. We have to find an arc-connected open neighbourhood of $x$ in $T$ inside~$D$. This is trivial if $x$ is a vertex or an inner point of an edge, so we assume that $x$ is an end. Then $D$ may be chosen of the form $D = \hat C(S,x)\cap T$, for some finite set $S\sub V(G)$. Since $G-S$ has only finitely many components, $T\setminus S$ is a finite union of open sets of this form, so $D$ is open and closed in~$T\setminus S$.

Similarly, $T\setminus S$ has only finitely many arc-components, and hence only finitely many components. Each of them is closed and open in~$T\setminus S$, and open even in~$T$. One of them, $C_x$~say, contains~$x$. Then $C_x\sub D$, since $D$ is open and closed in $T\sm S$. To complete the proof, we show that $C_x$ is arc-connected. 

Suppose not. As $C_x$ is the union of some of the finitely many arc-components of $T\setminus S$, it has only finitely many arc-components. Not all of them can be closed in $C_x$, since $C_x$ is connected. Let $C$ be an arc-component of $C_x$ that is not closed in~$C_x$. Then its closure $\Cbar$ in~$T$ meets~$C_x\sm C$, and clearly $\Cbar\cap (C_x\sm C)\sub\Omega$.

Since the components of $T\sm S$ other than $C_x$ are open in~$T$, we have $\Cbar\sub C_x\cup S$.
   As~$C$ is connected and $T$ is closed in~$|G|$, we know that $\Cbar$~is connected and closed in~$|G|$, and hence arc-connected by Lemma~\ref{arccntd}. Let $A$ be an arc in $\Cbar$ from a point in $C$ to one in~$C_x\sm C$. As $S$ is finite and $\Cbar\sm(S\cup C)\sub\Omega$ contains no arc, we can choose $A$ so that $A\cap S=\es$. But then $A\sub C_x$, contradicting the definition of $C$ as an arc-component of~$C_x$.
\end{proof}

Between any two of its points, $x$ and~$y$ say, a topological tree $T$ in~$|G|$ contains a unique arc, which we denote by~$xTy$. These arcs are `short' also in terms of the topology that $|G|$ induces on~$T$:

\begin{lemma}\label{shortarcs}
  If a sequence $z_0,z_1,\dotsc$ of points in $T$ converges to a point~$z$, then every neighbourhood of~$z$ contains all but finitely many of the arcs~$z_iTz_{i+1}$.
\end{lemma}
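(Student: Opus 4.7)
The plan is to reduce the lemma to an application of the local arc-connectedness of topological trees (Lemma~\ref{locarccon}) combined with the uniqueness of arcs between two points of a topological tree, as stated just before the lemma.

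First, I fix an arbitrary neighbourhood $U$ of $z$ in $|G|$; since each arc $z_iTz_{i+1}$ lies entirely in $T$, it suffices to arrange $z_iTz_{i+1}\subset U\cap T$ for all but finitely many~$i$, so I may work in the subspace $T$ and treat $U\cap T$ as a neighbourhood of $z$ there. Invoking Lemma~\ref{locarccon}, I pick an arc-connected open neighbourhood $V$ of $z$ in $T$ with $V\subset U\cap T$. Since $z_i\to z$ in $T$, there exists $N$ such that $z_i\in V$ whenever $i\ge N$.

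Now for every $i\ge N$, both $z_i$ and $z_{i+1}$ lie in the arc-connected set~$V\subset T$, so there is an arc in $V$ joining them. This arc is, in particular, an arc in~$T$ from $z_i$ to~$z_{i+1}$, so by the uniqueness of such arcs in the topological tree $T$ (which follows from $T$ containing no circle, and which has already been asserted before the lemma), it must coincide with $z_iTz_{i+1}$. Therefore $z_iTz_{i+1}\subset V\subset U$ for every $i\ge N$, as required.

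The only non-trivial input is Lemma~\ref{locarccon}, which I treat as given. When $z$ is an inner point of an edge or a vertex the existence of the required arc-connected open neighbourhood is immediate from the local star structure of $|G|$ at such points; the only genuinely delicate case is when $z$ is an end of $G$, and this is precisely what Lemma~\ref{locarccon} was set up to handle. Given that result, there is no further obstacle.
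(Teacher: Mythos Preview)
Your proof is correct and follows essentially the same approach as the paper's own proof, which is a one-sentence appeal to Lemma~\ref{locarccon} together with the uniqueness of arcs in~$T$. You have simply spelled out the argument in full detail.
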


\begin{proof} Since the arcs~$z_i T z_{i+1}$ are unique, Lemma~\ref{locarccon} implies that they lie in arbitrarily small neighbourhoods of~$z$.
\end{proof}

We shall need topological trees in $|G|$ as spanning trees for our analysis of~$\pi_1(|G|)$: arbitrary graph-theoretical spanning trees of $G$ can have non-trivial loops in their closures, which would leave no trace of chords and thus be invisible to our intended representation of homotopy classes by words of such chords.

Let us call a topological tree $T$ in $|G|$ a \emph{\tst} of~$G$ if $T$ contains~$V(G)$. Since $T$ is closed in~$|G|$, it then also contains~$\Omega(G)$. Similarly, a topological tree $T$ in~$|G|$ is a \emph{\tst} of a subspace $H$ of $|G|$ if $T\sub H$ and $T$ contains every vertex or end of $G$ that lies in~$H$.

Topological spanning trees are known to exist in all locally finite connected graphs (and in many more~\cite{DiestelBook05, DiestelLeaderBGC, DiestelLeaderNST}). They also exist in all the relevant subspaces. We need a slight technical strengthening of this:

\begin{lemma}
   \label{Hspanning}
  Let $T\sub H$ be standard subspaces of~$|G|$. If $T$ is a topological tree, it can be extended to a topological spanning tree of~$H$.
\end{lemma}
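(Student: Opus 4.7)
The plan is to start from a topological spanning tree $T_H$ of~$H$ (whose existence is cited immediately before the lemma) and modify it by a simultaneous edge swap so as to absorb~$T$. For each edge $e\in E(T)\sm E(T_H)$, the subspace $T_H\cup\{e\}$ contains a unique topological circle $C_e$, consisting of $e$ together with the unique $T_H$-arc between the endpoints of~$e$. Since $T$ contains no circle, $C_e\not\sub T$, so I can choose an edge $f_e\in C_e\cap(E(T_H)\sm E(T))$ for each such~$e$. The candidate topological spanning tree~$T^*$ is then the closure in~$|G|$ of the subspace of $T\cup T_H$ obtained by deleting the open interior of every chosen~$f_e$. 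A naive direct Zorn argument on the family of topological trees $T'$ with $T\sub T'\sub H$ breaks down because the closure of a chain can introduce a circle through an end (as in an exhausting chain of finite subpaths of a double ray that converges from both sides to a common end), so this kind of constructive edge swap seems unavoidable.

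Three of the four required properties of $T^*$ are routine. It is closed and standard by construction (the closure restores limit points, and no inner point of an edge in $T \cup T_H \sm \{f_e\}$ is lost), it contains~$T$ because no edge of~$T$ was removed, and it spans~$H$ because $T_H$ spans~$H$ and only open edges were deleted, so every vertex and every end of~$H$ is retained. For arc-connectedness, removing~$f_e$ splits $T_H$ into two components, but they are reconnected in~$T^*$ via the arc $C_e\sm\{f_e\}$, which runs from one endvertex of~$f_e$ to the other inside~$T^*$; inductively (or simultaneously) every pair of points remains joined by an arc in~$T^*$.

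The main obstacle is proving that $T^*$ contains no circle. A hypothetical circle $C\sub T^*$ would lie in $T\cup T_H$, and its edge set would form a non-trivial element of the topological cycle space of $T\cup T_H$. Since $T_H$ is a topological spanning tree of that subspace, this cycle space is generated---as a $\Z/2$-module under infinite thin sums---by the fundamental circles $\{C_e:e\in E(T)\sm E(T_H)\}$; but every $C_e$ uses the deleted edge~$f_e$, so any nontrivial combination of them would force~$C$ to include some~$f_e$, contradicting $C\sub T^*$. The delicate point is justifying the thin-sum decomposition for a circle that traverses ends, which relies on the topological cycle space theory referenced in the introduction; a more self-contained alternative would be to analyze directly, using local finiteness and the structure of neighbourhoods of ends, how any circle in $T\cup T_H$ near an end must traverse some~$f_e$. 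Either way, this no-circle verification is where the proof invests its main effort.
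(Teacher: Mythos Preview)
Your simultaneous edge-swap argument has a genuine gap, in fact two. The cycle-space step ``every $C_e$ uses the deleted edge~$f_e$, so any nontrivial combination would force $C$ to include some~$f_e$'' is false: the edge $f_e$ may also lie on $C_{e'}$ for some $e'\ne e$, and the two occurrences cancel in the $\Z/2$-sum. A finite example already breaks it. Take $G=K_4$ on $\{1,2,3,4\}$, $H=|G|$, $T_H$ the path with edges $12,23,34$, and $T$ the path with edges $13,14$. Then $C_{13}=\{13,12,23\}$ and $C_{14}=\{14,12,23,34\}$; choosing $f_{13}=23$ and $f_{14}=12$ (distinct, and each in its own fundamental circle) gives $T^*$ with edge set $\{13,14,34\}$ together with the isolated vertex~$2$. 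So $T^*$ contains the triangle $1\text{--}3\text{--}4\text{--}1$ and is not even connected. The rerouting argument for arc-connectedness likewise fails once several $f_e$'s are removed at once: the arc $C_e\sm f_e$ you want to use may itself pass through some other deleted~$f_{e'}$.

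You dismissed Zorn's Lemma too quickly. Your objection applies to the \emph{upward} Zorn argument on topological trees between $T$ and~$H$, and you are right that closures of ascending chains of trees can acquire circles through ends. But the paper runs Zorn \emph{downward}: it considers all standard (hence closed, hence compact) subspaces $S$ with $T\sub S\sub H$ that contain every vertex and end of~$H$, and takes a minimal one. Chains now go down, their intersection is again compact and contains $T$ and all vertices and ends, and connectedness is preserved because a nested intersection of compact connected Hausdorff spaces is connected. A minimal element is then arc-connected by Lemma~\ref{arccntd} and has no circle (else delete an edge), so it is the required topological spanning tree. No edge-swapping or cycle-space machinery is needed.
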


\begin{proof}
  As $G$ is locally finite and connected, $|G|$~is a compact Hausdorff space~\cite{DiestelBook05}. Let $\S$ be the set of standard subspaces of~$|G|$ such that $T\sub S\sub H$ and $S$ contains all the vertices and ends of $G$ that lie in~$H$. Every $S\in\S$ is closed in~$|G|$, and therefore compact. Since the intersection of a nested chain of compact connected Hausdorff spaces is connected~\cite[p.~203]{Willard}, $\S$~has a minimal element~$T'$ by Zorn's Lemma. By Lemma~\ref{arccntd}, $T'$~is arc-connected, and it contains no circle: if it did, we could delete an edge to obtain a smaller element of~$\S$. (Since $V(G)\cup\Omega(G)$ is totally disconnected, every circle in $|G|$ contains an edge.) Hence $T'$ is a topological tree in~$|G|$, and by definition of~$\S$ a topological spanning tree of~$H$ containing~$T$.
\end{proof}

Like graph-theoretical trees, topological trees in $|G|$ are contractible. We shall need a slightly technical strengthening of this. Call a homotopy $F(x,t)$ \emph{time-injective} if for every $x$ the map $t\mapsto F(x,t)$ is either constant or injective.

\begin{lemma}
   \label{TST}
For every point $x$ in a topological tree~$T$ in~$|G|$ there is a time-injective deformation retraction of $T$ onto~$x$.
\end{lemma}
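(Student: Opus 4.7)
Pick positive weights $\ell_e$ for the edges of~$T$ with $\sum_e\ell_e<\infty$ (possible since $T$ has only countably many edges), and let $L(y)$ denote the total length of the arc~$yTx$ under these weights. For $y\neq x$ let $\gamma_y\colon[0,L(y)]\to yTx$ be the corresponding length parametrization with $\gamma_y(0)=x$, and set $F(y,t)\assign\gamma_y((1-t)L(y))$ for $y\neq x$ and $F(x,t)\assign x$. Then $F(y,0)=y$, $F(y,1)=x$ and $F(x,t)=x$, and time-injectivity is immediate: for $y=x$ the track is constant, and for $y\neq x$ the map $t\mapsto(1-t)L(y)$ is strictly decreasing while $\gamma_y$ is a homeomorphism.

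The substance of the argument is to show that $F$ is continuous. The key step is continuity of $L$, which is clear at vertices and interior points of edges by local finiteness of~$T$. At an end $\omega\in T$, given $\eps>0$, pick a finite edge set $E_0\sub T$ with $\sum_{e\notin E_0}\ell_e<\eps$ and a finite vertex set $S\sub V(G)$ that contains the endpoints of $E_0$ and separates $\omega$ from~$x$. By Lemma~\ref{locarccon}, the arc-component $C_\omega$ of $T\cap\hat C(S,\omega)$ containing $\omega$ is open in~$T$; it contains no edge of $E_0$ and hence has total length~$<\eps$. For any $y\in C_\omega$ the median $m$ of $y,\omega,x$ in the tree~$T$ lies on $yT\omega\sub C_\omega$, so the arcs $yTx$ and $\omega Tx$ share the common suffix $mTx$ and differ only by the sub-arcs $yTm,\omega Tm\sub C_\omega$; hence $|L(y)-L(\omega)|<\eps$.

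Continuity of $F$ then follows by case analysis on $(y_0,t_0)$. When $F(y_0,t_0)=x$ (i.e.\ $y_0=x$ or $t_0=1$), continuity reduces to the assertion that every $z\in T$ of sufficiently small length-distance from $x$ lies in any prescribed subspace-neighbourhood of~$x$; this is verified separately according as $x$ is a vertex, an edge-interior, or an end (in the latter case using the $\hat C(S,x)$-neighbourhoods exactly as above). When $y_0\neq x$ and $t_0\in[0,1)$, for $y$ close to $y_0$ the arc $yTx$ shares with $y_0Tx$ a common suffix $bTx$ (from the branch point~$b$, which is close to $y_0$ by Lemma~\ref{shortarcs}) that contains $z_0\assign F(y_0,t_0)$; on this common suffix the length parametrization is independent of~$y$, and $(1-t)L(y)\to(1-t_0)L(y_0)$ by continuity of~$L$, so $F(y,t)\to z_0$. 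The main obstacle is continuity of $L$ at ends, and relatedly the length-to-subspace continuity at~$x$ when $x$ itself is an end.
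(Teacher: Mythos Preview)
Your proof is correct and follows the same construction as the paper: assign summable edge lengths and retract each $y$ toward $x$ along $yTx$ at unit speed in the resulting length metric. The only difference is organizational: the paper first observes (via Lemma~\ref{shortarcs}) that this metric induces the given topology on~$T$ and then establishes continuity of $F$ in one stroke via the Lipschitz estimate $d(F(y,t),F(y',t))\le d(y,y')$, whereas you prove continuity of $L=d(x,\cdot)$ directly and handle $F$ by a branch-point case analysis---the same median argument appears in both, just deployed at different points.
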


\begin{proof}
  The space $T$ is metrizable as follows.
   Choose an enumeration of the edges in $T$ and give the $n$th edge length~$2^{-n}$. Define the distance $d(y,z)$ between points $y,z$ in $T$ as the sum of lengths of the edges (and partial edges) in $yTz$; note that if $y\not=z$ then $yTz$ meets the interior of at least one edge. Then clearly $d$ is a metric with $d(y,z)\le 1$ for all $y,z\in T$, and using Lemma~\ref{shortarcs} it is easy to check that it induces the given topology on $T$. Further, if $z\in yTy'$ for some $y,y'\in T$ we have $d(y,y')=d(y,z)+d(z,y')$. We construct a time-injective homotopy $F$ in $T$ from the identity on $T$ to the map $T\to \{x\}$; then we have $F(y,t) \in xTy \sub X$ for every $y\in X$ and $t\in[0,1]$, and hence $F \restr (X\times[0,1])$ will be the desired time-injective homotopy for $X$. For every $y\in T$ and $t\in[0,1]$ let $F(y,t)$ be the unique point on $xTy$ at distance $(1-t)\cdot d(x,y)$ from $x$.

   For the proof that $F$ is continuous, we show that $d(F(y,t),F(y',t)) \le d(y,y')$ for every $y,y'\in T$ and $t\in[0,1]$; then for every $\eps>0$ and every $y,y'\in T$ with $d(y,y')<\eps/2$ and $t,t'\in[0,1]$ with $|t-t'|<\eps/2$ we have
   \begin{align*}
     d(F(y,t),F(y',t')) &\le d(F(y,t),F(y',t)) + d(F(y',t),F(y',t'))\\
     &\le d(y,y') + |t-t'|\cdot d(x,y')\\
     &< \eps/2 + (\eps/2) \cdot 1 = \eps.
   \end{align*}
   As $xTy$ and $xTy'$ are closed, there is a last point $z$ on $xTy$ that also lies in $xTy'$; this point satisfies $xTz = xTy \cap xTy'$ as the unique $x$--$z$~arc $xTz$ is contained in both $xTy$ and $xTy'$. Then $yTz \cup zTy'$ is a $y$--$z$~arc in $T$ and hence $yTy'=yTz \cup zTy'$. This implies $d(y,y')=d(y,z)+d(z,y')$. If $F(y,t)\in zTy$ and $F(y',t)\in zTy'$, then
   \begin{equation*}
      d(F(y,t),F(y',t)) \le d(F(y,t),z) + d(z,F(y',t)) \le d(y,z)+d(z,y') = d(y,y').
   \end{equation*}
   Otherwise at least one of $F(y,t),F(y',t)$ lies in $xTz = xTy \cap xTy'$ and hence both $F(y,t)$ and $F(y',t)$ are contained in $xTy$ or in $xTy'$. In particular, one of $F(y,t),F(y',t)$ lies on the arc between the other and~$x$. Then
   \begin{align*}
      d(F(y,t),F(y',t)) &= |d(x,F(y,t))-d(x,F(y',t))|\\
      &= (1-t)\cdot |d(x,y)-d(x,y')| \le d(y,y').
   \end{align*}\vskip-\baselineskip
\end{proof}

Given a standard subspace $H$ of~$|G|$, let us call an end~$\omega$ of $G$ \emph{trivial in~$H$} if $\omega\in H$ and $\omega$ has a contractible neighbourhood in~$H$. For instance, all the ends of $G$ are trivial in every \tst\ of~$G$, by Lemma~\ref{TST}. Trivial ends in larger subspaces can also be made visible by \tst s:

\begin{lemma}
   \label{trivial}
  Let $T$ be a topological spanning tree of a standard subspace $H$ of $|G|$. An end $\omega\in H$ of $G$ is trivial in $H$ if and only if $\omega$ has a neighbourhood in $H$ that contains no chord of~$T$.
\end{lemma}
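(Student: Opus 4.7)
The plan is to prove each direction of the equivalence separately; the ``if'' direction follows essentially from Lemma~\ref{TST}, while the ``only if'' direction requires a contradiction argument whose main step is more subtle.

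For the ``if'' direction, assume $U$ is a neighbourhood of~$\omega$ in~$H$ containing no chord of~$T$. The strategy is to replace $U$ by a smaller basic neighbourhood of~$\omega$ lying entirely in~$T$, so that Lemma~\ref{TST} can be applied. First I would pick $V=\hat C(S,\omega)\cap H\sub U$. Because no chord can lie entirely in~$V$, the only chords that meet~$V$ must have one endvertex in~$S$ and the other in~$C(S,\omega)$, and by local finiteness there are only finitely many of these. Enlarging $S$ to~$S'$ by adding their $C(S,\omega)$-side endvertices yields $V':=\hat C(S',\omega)\cap H$, and a short case check---using again that no chord has both endvertices in $C(S,\omega)$---shows that no chord meets~$V'$ at all, whence $V'\sub T$. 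Taking $W$ to be the arc-component of~$\omega$ in~$V'$, Lemma~\ref{locarccon} makes $W$ open in~$T$, hence open in~$H$ since $V'$ is. By uniqueness of arcs in the topological tree~$T$ together with arc-connectedness of~$W$, we get $\omega Ty\sub W$ for every $y\in W$, so the deformation retraction of~$T$ onto~$\omega$ from Lemma~\ref{TST} restricts to a contraction of~$W$ onto~$\omega$, showing that $\omega$ is trivial in~$H$.

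For the ``only if'' direction, assume $\omega$ has a contractible neighbourhood $U\sub H$, and suppose for contradiction that every neighbourhood of~$\omega$ in~$H$ contains a chord of~$T$. I would choose an exhausting nested sequence $S_1\sub S_2\sub\dotsb$ of finite vertex sets of~$G$ with each $V_n:=\hat C(S_n,\omega)\cap H$ inside~$U$, and pick chords $e_n=u_nv_n\sub V_n$. Then $u_n,v_n\to\omega$, and Lemma~\ref{shortarcs} applied in~$T$ to the interleaved sequence $u_1,v_1,u_2,v_2,\ldots$ gives $u_nTv_n\sub U\cap T$ for all sufficiently large~$n$. Hence for such~$n$ the circle $C_n:=e_n\cup u_nTv_n$ lies in~$U$, and so is null-homotopic in~$U$ and therefore in~$H$. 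The contradiction will come from the fact that such a circle---a chord of a topological spanning tree together with the corresponding tree arc---cannot be null-homotopic in~$H$.

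The main obstacle is proving this last non-triviality statement. The approach I would take is to produce a continuous retraction $\rho\colon H\to T\cup e_n$ that fixes $T\cup e_n$ pointwise and maps every other chord $e'=u'v'$ onto its tree arc $u'Tv'$. Verifying continuity at vertices and ends requires care because infinitely many chords may accumulate; the tools needed are the metric on~$T$ built in the proof of Lemma~\ref{TST} together with Lemma~\ref{shortarcs}, which ensures that arcs between close vertices are small. Once such a~$\rho$ is available, $T\cup e_n$ deformation retracts onto~$C_n\cong S^1$, and so $\pi_1(H)$ surjects onto~$\pi_1(C_n)\cong\Z$ with $[C_n]$ mapping to a generator, contradicting the null-homotopy of~$C_n$ in~$H$.
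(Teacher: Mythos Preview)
Your proof is correct, but the ``only if'' direction takes a substantially longer route than the paper's.

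For the ``if'' direction, your argument is essentially the paper's: shrink to a basic neighbourhood lying in~$T$, take an arc-connected piece, and contract via Lemma~\ref{TST}. The paper passes to the closure of this piece and applies Lemma~\ref{TST} to that standard subspace; you instead restrict the global retraction of~$T$, which works equally well since arc-components of open sets in~$T$ are closed under taking $T$-arcs to~$\omega$.

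For the ``only if'' direction, the paper avoids both your contradiction argument and your sequence of chords. It uses Lemma~\ref{locarccon} to find an arc-connected open neighbourhood $T'$ of $\omega$ in~$T$ inside~$U$, writes $T' = U'\cap T$ for some open $U'\sub U$ in~$H$, and observes that any chord contained in~$U'$ would have both endpoints in~$T'$; the tree arc between them, together with the chord, would then form a circle in~$U$. The paper then simply asserts that the contractible $U$ contains no circle, which finishes the proof in one line.

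That bare assertion is exactly what you spend most of your effort justifying, and your retraction $H\to T\cup e_n\to C_n$ does the job. But a lighter argument suffices: for any edge~$e$, the map $H\to S^1$ that collapses $H\sm\interior e$ to a point is continuous (the only nontrivial check is at the endpoints of~$e$, where local finiteness gives small half-edge neighbourhoods), and it sends any loop traversing $e$ exactly once to a generator of~$\pi_1(S^1)$. So your approach is more self-contained than the paper's one-line assertion, at the cost of building a heavier retraction than the problem requires; and once one knows that circles through chords are essential, the paper's direct construction of~$U'$ is shorter than reaching a contradiction via a convergent sequence of chords.
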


\begin{proof}
  Suppose first that $\omega$ has a neighbourhood in~$H$ containing no chord of~$T$. This neighbourhood $U$ can be chosen of the form $\hat C(S,\omega)\cap H$, since these form a neighbourhood basis of~$\omega$, and so that the $S$--$C$ edges in $H$ are no chords of $T$ either. Then~$U$, indeed its closure $\Ubar$ in~$H$, contains no inner point of any chord of~$T$, i.e., $\Ubar\sub T$. By Lemma~\ref{locarccon}, there is an arc-connected neighbourhood $U'\sub U$ of $\omega$ in~$H$, which we may clearly choose so that its closure $T'$ in $H$ is a standard subspace of~$|G|$. Then $T'\sub\Ubar\sub T$, and $T'$ is arc-connected by Lemma~\ref{arccntd}. So $T'$ is a topological tree in~$|G|$, and contractible by Lemma~\ref{TST}.

  Conversely, suppose that $\omega$ has a contractible neighbourhood $U$ in~$H$; this cannot contain a circle. By Lemma~\ref{TST}, the end $\omega$~has an open arc-connected neighbourhood $T'$ in~$T$ inside~$U$. Since $T$ carries the subspace topology from~$H$, this has the form $T'=U'\cap T$ for an open subset $U'\sub U$ of~$H$. This $U'$ is a neighbourhood of~$\omega$ in~$H$ that contains no chord of~$T$: for any such chord it would also contain an arc in~$T'\sub U$ between its vertices, to form a circle in $U$ that does not exist.
\end{proof}

An edge $e=uv$ of $G$ has two {\em directions\/}, $(u,v)$ and~$(v,u)$. A~triple $(e,u,v)$ consisting of an edge together with one of its two directions is an {\em oriented edge\/}. The two oriented edges corresponding to $e$ are its two {\em orientations\/}, denoted by $\ve$ and~$\ev$. Thus, $\{\ve,\ev\} = \{(e,u,v), (e,v,u)\}$, but we cannot generally say which is which. However, from the definition of $G$ as a CW-complex we have a fixed homeomorphism $\theta_e\colon [0,1]\to e$. We call $(\theta_e(0),\theta_e(1))$ the {\em natural direction\/} of~$e$, and $(e,\theta_e(0),\theta_e(1))$ its {\em natural orientation\/}.

Let $\sigma\colon [0,1]\to |G|$ be a path in~$|G|$. Given an edge $e = uv$ of~$G$, if $[s,t]$ is a subinterval of $[0,1]$ such that $\{\sigma(s),\sigma(t)\} = \{u,v\}$ and $\sigma((s,t)) = \interior e$, we say that $\sigma$ \emph{traverses~$e$} on~$[s,t]$. It does so \emph{in the direction of~$(\sigma(s), \sigma(t))$}, or \emph{traverses $\ve = (e,\sigma(s),\sigma(t))$}. We then call its restriction to $[s,t]$ a \emph{pass of $\sigma$ through~$e$}, or~$\ve$, \emph{from $\sigma(s)$ to~$\sigma(t)$}.

Using that $[0,1]$ is compact and $|G|$ is Hausdorff, one easily shows that a path in $|G|$ contains at most finitely many passes through any given edge:

\begin{lemma}
   \label{pass}
A path in $|G|$ traverses each edge only finitely often.
\end{lemma}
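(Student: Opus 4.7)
My plan is to argue by contradiction, using compactness of $[0,1]$ and the Hausdorff property of $|G|$. Suppose $\sigma\colon [0,1]\to|G|$ has infinitely many passes through a fixed edge $e=uv$, say on intervals $[s_i,t_i]$ for $i\in\N$.

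The first step is to observe that distinct passes have disjoint \emph{open} interiors: if, say, $s_1<s_2<t_1$, then $s_2\in(s_1,t_1)$ would give $\sigma(s_2)\in\interior e$, contradicting $\sigma(s_2)\in\{u,v\}$. Hence the $(s_i,t_i)$ are pairwise disjoint subintervals of $[0,1]$, so $\sum_i(t_i-s_i)\le 1$ and therefore $t_i-s_i\to 0$.

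Next, for each $i$ I would choose $r_i\in(s_i,t_i)$ with $\sigma(r_i)$ equal to the midpoint $p\assign\theta_e(\tfrac{1}{2})$ of $e$; this is possible since $\sigma((s_i,t_i))=\interior e$. By compactness, some subsequence $r_{i_k}$ converges to a point $r\in[0,1]$. From $s_{i_k}\le r_{i_k}\le t_{i_k}$ and $t_{i_k}-s_{i_k}\to 0$, we also get $s_{i_k}\to r$. Continuity of $\sigma$ together with $\sigma(r_{i_k})=p$ forces $\sigma(r)=p$. Since $\{u,v\}$ is a finite, hence closed, subset of $|G|$, after passing to a further subsequence I may assume $\sigma(s_{i_k})$ is constantly one of $u,v$, say $u$; continuity then gives $\sigma(r)=u$. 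But $p\ne u$, contradicting the fact that $\sigma(r)$ is a single point of the Hausdorff space $|G|$.

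There is no serious obstacle here; the only subtle point is the first step, since the definition of ``pass'' in the paper does not explicitly require the interval $[s,t]$ to be maximal, so disjointness of the open parts has to be verified directly. Everything else reduces to a standard compactness-and-Hausdorff squeeze.
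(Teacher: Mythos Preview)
Your proof is correct and follows essentially the same approach as the paper: both argue that the pass-intervals have disjoint interiors, hence shrinking length, extract a convergent subsequence of endpoints, and derive a contradiction to continuity via the Hausdorff property. The only cosmetic difference is that you use the midpoint $p$ versus an endpoint $u$ to witness the failure of continuity, whereas the paper uses the two endpoints $u$ and $v$ against each other; your argument is also slightly more explicit about why the interiors $(s_i,t_i)$ are disjoint, a step the paper leaves as ``clearly''.
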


\begin{proof}
  Let $\sigma$ be a path in $|G|$, and let $e=uv$ be an edge such that $\sigma$ contains infinitely many passes $\sigma\restr [s_n,t_n]$ through~$e$, $n=1,2,\dots$. Passing to a subsequence if necessary, we may assume that the sequence $s_1,s_2,\dots$ converges, say to $s\in [0,1]$. Then the sequence of the corresponding $t_n$ also converges to~$s$: given $\epsilon > 0$, choose $m$ large enough that for all $n > m$ both $|s_n - s| < \epsilon/2$ and $t_n - s_n < \epsilon/2$ (using that the lengths of the intervals $[s_n,t_n]$ converge to~0, which they clearly do); then $|t_n - s| < \epsilon$ for all $n > m$. But now $\sigma$ fails to be continuous at~$s$, because $\{\sigma(s_n), \sigma(t_n)\} = \{u,v\}$ for each~$n$ but each $u,v$ has a neighbourhood not containing the other.
\end{proof}

\section{Infinite words, and limits of free groups}\label{wordsec}

In the this section and the next, we give a combinatorial description of~$\pi_1(|G|)$---indeed of $\pi_1(H)$ for any standard subspace $H$ of~$|G|$, when $G$ is any connected locally finite graph. Our description will involve infinite words and their reductions in a continuous setting, and embedding the group they form as a subgroup of a limit of finitely generated free groups. Such things have been studied also by Eda~\cite{EdaInfFreeProd}, Cannon~\& Conner~\cite{CannonConnerER}, and Chiswell~\& M\"uller~\cite{ChiswellMueller}.

When $G$ is finite, $\pi_1(|G|)$~is the free group $F$ on the set of \emph{chords} (arbitrarily oriented) of any fixed spanning tree, the edges of $G$ that are not edges of the tree. The standard description of $F$ is given in terms of reduced words of those oriented chords, where reduction is performed by cancelling adjacent inverse pairs of letters such as $\ve_i\ev_i$ or~$\ev_i\ve_i$. The map assigning to a path in $|G|$ the sequence of chords it traverses defines the canonical group isomorphism between $\pi_1(|G|)$ and~$F$; in particular, reducing the words obtained from homotopic paths yields the same reduced word.

Our description of $\pi_1(|G|)$ when $G$ is infinite will be similar in spirit, but more complex. We shall start not with an arbitrary spanning tree but with a topological spanning tree of~$|G|$. Then every path in $|G|$ defines as its `trace' an infinite word in the oriented chords of that tree, as before. However, these words can have any countable order type, and it is no longer clear how to define the reduction of words in a way that captures homotopy of paths.

Consider the following example. Let $G$ be the infinite ladder, with a topological spanning tree~$T$ consisting of one side of the ladder, all its rungs, and its unique end~$\omega$ (Figure~\ref{fig:singleladder}). The path running along the bottom side of the ladder and back is a null-homotopic loop. Since it traces the chords $\ve_0, \ve_1,\dotsc$ all the way to~$\omega$ and then returns the same way, the infinite word $\ve_0\ve_1\dotso\ev_1\ev_0$ should reduce to the empty word. But it contains no cancelling pair of letters, such as $\ve_i\ev_i$ or~$\ev_i\ve_i$.

\begin{figure}[htbp]
\centering
\includegraphics[width=.7\linewidth]{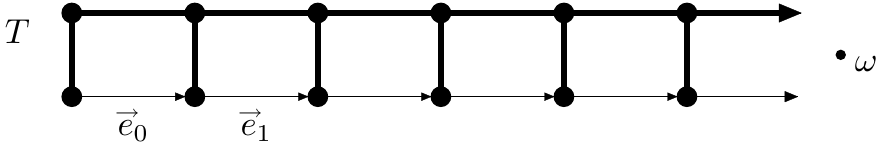}
\caption{The null-homotopic loop $\ve_0 \ve_1\dots\omega\dots\ev_1\ev_0$} 
\label{fig:singleladder}
\end{figure}

This simple example suggests that some transfinite equivalent of cancelling pairs of letters, such as cancelling inverse pairs of infinite sequences of letters, might lead to a suitable notion of reduction. However, in graphs with infinitely many ends one can have null-homotopic loops whose trace of chords contains no cancelling pair of subsequences whatsoever: 

\begin{example}\label{T2}
  There is a locally finite graph $G$ with a null-homotopic loop $\sigma$ in~$|G|$ whose trace of chords contains no cancelling pair of subsequences, of any order type.
\end{example}

\begin{proof}
Let $T$ be the binary tree with root $r$. Write $V_n$ for the set of vertices at distance~$n$ in~$T$ from~$r$, and let $T_n$ be the subtree of $T$ induced by ${V_0\cup\dots\cup V_n}$. Our first aim will be to construct a loop $\sigma$ in $|T|$ that traverses every edge of $T$ once in each direction. We shall obtain $\sigma$ as a limit of similar loops $\sigma_n$ in $T_n\sub |T|$.

Let $\sigma_0$ be the unique (constant) map $[0,1]\to T_0$. Assume inductively that $\sigma_n\colon [0,1]\to T_n$ is a loop traversing every edge of $T_n$ exactly once in each direction. Assume further that $\sigma_n$ pauses every time it visits a vertex in $V_n$ (i.e.,\ a~leaf of $T_n$), remaining stationary at that vertex for some time. More precisely, we assume for every vertex $v\in V_n$ that $\sigma_n^{-1}(v)$ is a non-trivial closed interval. Let us call the restriction of $\sigma_n$ to such an interval a {\em pass\/} of $\sigma_n$ through~$v$.

Let $\sigma_{n+1}$ be obtained from $\sigma_n$ by replacing, for each vertex $v$ in~$V_n$, the pass of $\sigma_n$ through $v$ by a topological path that first travels from $v$ to its first neighbour in~$V_{n+1}$ and back, and then to its other neighbour in~$V_{n+1}$ and back, pausing at each of those neighbours for some non-trivial time interval. Outside the passes of $\sigma_n$ through leaves of~$T_n$, let $\sigma_{n+1}$ agree with~$\sigma_n$.

Let us now define~$\sigma$. Let $s\in [0,1]$ be given. If its values $\sigma_n (s)$ coincide for all large enough~$n$, let $\sigma(s) := \sigma_n (s)$ for these~$n$. If not, then $s_n := \sigma_n (s)\in V_n$ for every~$n$, and $s_0 s_1 s_2\dots$ is a ray in~$T$; let $\sigma$ map $s$ to the end of $G$ containing that ray. This map $\sigma$ is easily seen to be continuous, and by Lemma~\ref{TST} it is null-homotopic. It is also easy to check that no sequence of passes of $\sigma$ through the edges of $T$ is followed immediately by the inverse of this sequence.

   The edges of $T$ are not chords of a topological spanning tree, but this can be achieved by changing the graph: just double every edge and subdivide the new edges once. The new edges together with all vertices and ends then form a topological spanning tree in the resulting graph~$G$, whose chords are the original edges of our tree~$T$, and $\sigma$ is still a (null-homotopic) loop in~$|G|$.
\end{proof}

Example~\ref{T2} shows that there is no hope of capturing homotopies of loops in terms of word reduction defined recursively by cancelling pairs of inverse subwords, finite or infinite. We shall therefore define the reduction of infinite words differently, though only slightly. We shall still cancel inverse letters in pairs, even one at a time, and these reduction `steps' will be ordered linearly (rather unlike the simultaneous dissolution of all the chords by the homotopy in the example). However, the reduction steps will not be well-ordered.

This definition of reduction is less straightforward, but it has an important property: as for finite~$G$, it will be purely combinatorial in terms of letters, their inverses, and their linear order, making no reference to the interpretation of those letters as chords and their relative positions under the topology of~$|G|$. 

Another problem, however, is more serious: since the reduction steps are not well-ordered, it will be difficult to handle reductions---e.g.\ to prove that every word reduces to a unique reduced word, or that word reduction captures the homotopy of loops, i.e.\ that traces of homotopic loops can always be reduced to the same word. The key to solving these problems will lie in the observation that the property of being reduced can be characterized in terms of all the finite subwords of a given word. We shall formalize this observation by way of an embedding of our group $F_\infty$ of infinite words in the inverse limit $F^*$ of the free groups on the finite subsets of letters.

The remainder of this section is devoted to carrying out this programme. In Sections~\ref{sec:pi1toFinf} and~\ref{sec:injectivity} we shall then study how $\pi_1(|G|)$ embeds as a subgroup in~$F_\infty$ when its letters are interpreted as oriented chords of a \tst\ of~$G$. We shall prove that, as in the finite case, the map assigning to a loop in $|G|$ its trace of chords and reducing that trace is well defined on homotopy classes, giving us injective homomorphisms
 $$\pi_1(|G|)\to F_\infty\to F^*\,.$$
By determining their precise images we shall complete our combinatorial characterization of~$\pi_1(|G|)$---and likewise of $\pi_1(H)$ for subspaces~$H$ of~$|G|$.

\medbreak

Let $\vA = \{\ve_0, \ve_1, \dots\}$ and $\{\ev_0, \ev_1,\dots\}$ be disjoint countable sets. Let us call the elements of
 $$A:= \{\ve_0, \ve_1, \dots\}\cup\{\ev_0, \ev_1,\dots\}$$
\emph{letters}, and say that $\ve_i$ and~$\ev_i$ are \emph{inverse} to each other. A~\emph{word} in~$A$ is a map $w\colon S\to A$ from a totally ordered countable set~$S$, the set of \emph{positions} of (the letters used by)~$w$, such that $w^{-1}(a)$ is finite for every $a\in A$. The only property of $S$ relevant to us is its order type, so two words $w\colon S\to A$ and $w'\colon S'\to A$ will be considered the same if there is an order-preserving bijection $\varphi\colon S\to S'$ such that $w=w'\circ \varphi$. If $S$ is finite, then $w$ is a \emph{finite} word; otherwise it is~\emph{infinite}. The \emph{concatenation} $w_1 w_2$ of two words is defined in the obvious way: we assume that their sets $S_1,S_2$ of positions are disjoint, put $S_1$ before~$S_2$ in~$S_1\cup S_2$, and let $w_1w_2$ be the combined map $w_1\cup w_2$. For $I\sub \N$ we let
 $$A_I\assign\{\ve_i \mid i\in I\}\cup \{\ev_i \mid i\in I\}\,,$$
and write $w\restr I$ as shorthand for the restriction $w\restr w^{-1}(A_I)$. Note that if $I$ is finite then so is the word~$w\restr I$, since $w^{-1}(a)$ is finite for every~$a$.

An \emph{interval} of~$S$ is a~sub\-set $S'\sub S$ closed under betweenness, i.e., such that whenever $s'<s<s''$ with $s',s''\in S'$ then also $s\in S'$. The most frequently used intervals are those of the form $[s',s'']_S\assign\{s\in S \mid {s' \le s \le s''}\}$  and $(s',s'')_S\assign\{s\in S \mid s'<s<s''\}$. If $(s',s'')_S=\es$, we call $s',s''$ \emph{adjacent} in~$S$.

A~\emph{reduction} of a finite or infinite word $w\colon S\to A$ is a totally ordered set $R$ of disjoint 2-element subsets of~$S$ such that the two elements of each $p\in R$ are adjacent in $S\sm\bigcup\{q\in R \mid q<p\}$ and are mapped by $w$ to inverse letters~$\ve_i,\ev_i$. We say that \emph{$w$ reduces to} the word $w \restr (S\sm\bigcup R)$. If $w$ has no nonempty reduction, we call it \emph{reduced}.

Informally, we think of the ordering on~$R$ as expressing time. A~reduction of a finite word thus recursively deletes cancelling pairs of (positions of) inverse letters; this agrees with the usual definition of reduction in free groups. When $w$ is infinite, cancellation no longer happens `recursively in time', because $R$ need not be well ordered.

As is well known, every finite word $w$ reduces to a unique reduced word, which we denote as~$r(w)$. Note that $r(w)$ is unique only as an abstract word, not as a restriction of~$w$: if $w = \ve_0\ev_0\ve_0$ then $r(w) = \ve_0$, but this letter $\ve_0$ may have either the first or the third position in~$w$. The set of reduced finite words forms a group, with multiplication defined as $(w_1,w_2)\mapsto r(w_1 w_2)$, and identity the empty word~$\es$. This is the free group with free generators $\ve_0,\ve_1,\dots$ and inverses $\ev_0,\ev_1\dots$. For finite $I\sub \N$, the subgroup
 $$F_I := \{w\mid \im w\sub A_I\}$$
 is the free group on $\{\ve_i \mid i\in I\}$.

Consider a word~$w$, finite or infinite, and $I\sub \N$. It is easy to check the following:
\begin{equation}\begin{minipage}[c]{0.72\textwidth}\label{inducedreduction}\em
If $R$ is a reduction of~$w$ then $\big\{\{s,s'\}\in R\mid w(s)\in A_I\big \}$, with the ordering induced from~$R$, is a reduction of~$w\restr I$.
  \end{minipage}\ignorespacesafterend\end{equation}

\noindent
   In particular:
\begin{equation}\begin{minipage}[c]{0.72\textwidth}\label{inducedreductioninformal}\em
Any result of first reducing and then restricting a word can also be obtained by first restricting and then reducing it.\looseness=-1
  \end{minipage}\ignorespacesafterend\end{equation}

By~\eqref{inducedreductioninformal}, mapping $w\in F_J$ to $r(w\restr I)\in F_I$ for $I\sub J$ defines an inverse system of homomorphisms $F_J\to F_I$. Let us write
 $$F^* := F^*(\vA) := \varprojlim F_I$$
 for the corresponding inverse limit of the~$F_I$. By our assumption that $I$ runs through all the finite subsets of some countable set, and $F_I$ can be viewed as the free group on~$I$, this defines $F^*$ uniquely as an abstract group.

\medbreak

Our next aim is to show that also every infinite word reduces to a unique reduced word. We shall then be able to extend the map $w\mapsto r(w)$, defined so far only for finite words~$w$, to infinite words~$w$. The operation $(w_1,w_2)\mapsto r(w_1 w_2)$ will then make the set of reduced (finite or infinite) words into a group, our desired group~$F_\infty$.

Existence is immediate:

\begin{lemma}\label{lemma:reduce}
  Every word reduces to some reduced word.
\end{lemma}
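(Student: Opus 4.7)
The natural strategy is to produce a reduction of $w$ that is \emph{maximal} in an appropriate sense, and then to show that the word to which it reduces cannot admit a further nonempty reduction. Explicit (recursive) construction is not an option here because reductions need not be well ordered, so I would use Zorn's lemma on the set of all reductions of $w$.

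Concretely, order the set of reductions of $w$ by declaring $R\le R'$ if $R$ is an initial segment of~$R'$; that is, $R\sub R'$ and the ordering of $R$ is the ordering it inherits from~$R'$, with every element of $R'\sm R$ lying above every element of~$R$. To apply Zorn, I need every chain of reductions to have an upper bound. Given such a chain~$(R_i)$, form $\bigcup_i R_i$ with the total order whose restriction to each $R_i$ is the given one (this is well defined since each $R_i$ is an initial segment of the next). To verify that $\bigcup_i R_i$ is itself a reduction, fix $p\in R_i$; then $\{q\in\bigcup_j R_j \mid q<p\}=\{q\in R_i\mid q<p\}$ because $R_i$ is an initial segment, so the adjacency requirement in $S\sm\bigcup\{q<p\}$ holds by virtue of $R_i$ being a reduction.

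Now let $R$ be a maximal reduction of $w$ obtained from Zorn, and set $w':=w\restr(S\sm\bigcup R)$. I would show $w'$ is reduced by contradiction: if $w'$ admits a nonempty reduction $R'$, define an extension $R\cup R'$ of $R$ by placing every element of $R'$ above every element of $R$ and keeping the orderings on $R$ and $R'$ as given. This is again a reduction of~$w$: for $p\in R$ nothing changes, and for $p\in R'$ the set of pairs strictly below $p$ in $R\cup R'$ is $R\cup\{q\in R'\mid q<_{R'} p\}$, so
\[
 S\,\sm\,\textstyle\bigcup\{q\in R\cup R'\mid q<p\}\;=\;(S\sm\bigcup R)\,\sm\,\bigcup\{q\in R'\mid q<_{R'}p\},
\]
and the two positions forming $p$ are adjacent in this set because $R'$ was chosen to be a reduction of~$w'$. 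Since $R'\ne\emptyset$, this contradicts maximality of~$R$, so $w'$ is reduced.

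The only point that requires care is the verification that an increasing union of reductions is a reduction, which is why I set up the partial order by \emph{initial segments} rather than by mere containment: the adjacency condition for a pair $p$ depends precisely on the pairs strictly below $p$ in the order, and this is preserved exactly when earlier reductions sit as initial segments of later ones. Apart from this subtlety, both the Zorn step and the maximality argument are routine.
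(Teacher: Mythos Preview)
Your proof is correct and follows the same approach as the paper: apply Zorn's Lemma to obtain a maximal reduction~$R$, and observe that $w\restr(S\sm\bigcup R)$ is then reduced. The paper's proof is a two-sentence sketch that leaves implicit exactly the details you supply---the choice of partial order, the verification that chains have upper bounds, and the argument that maximality forces the residual word to be reduced---so your version is simply a fleshed-out form of the same argument. Your remark that the partial order should be by initial segments rather than by mere containment is well taken; this is precisely what makes the union of a chain again a reduction.
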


\begin{proof}
  Let $w\colon S\to A$ be any word. By Zorn's Lemma there is a maximal reduction $R$ of~$w$. Since $R$ is maximal, the word $w\restr (S\sm\bigcup R)$ is reduced.
\end{proof}

To prove uniqueness, we begin with a characterization of the reduced words in terms of reductions of their finite subwords.
Let $w\colon S\to A$ be any word. If $w$ is finite, call a position $s\in S$ \emph{permanent} in~$w$ if it is not deleted in any reduction, i.e., if $s\in S\sm\bigcup R$ for every reduction $R$ of~$w$. If $w$ is infinite, call a position $s\in S$ \emph{permanent} in $w$ if there exists a finite $I\sub\N$ such that $w(s)\in A_I$ and $s$ is permanent in $w\restr I$. By~\eqref{inducedreductioninformal}, a permanent position of $w\restr I$ is also permanent in $w\restr J$ for all finite $J\supseteq I$. The converse, however, need not hold: it may happen that $\{s,s'\}$ is a pair (`of cancelling positions') in a reduction of~$w\restr I$ but $w\restr J$ has a letter from $A_J\sm A_I$ whose position lies between $s$ and~$s'$, so that $s$ and $s'$ are permanent in~$w\restr J$.

\begin{lemma}\label{permanent}
  A word is reduced if and only if all its positions are permanent.
\end{lemma}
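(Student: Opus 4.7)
The plan is to handle the two directions of the biconditional separately. The implication ``$w$ not reduced $\Rightarrow$ some position not permanent'' will follow in essentially one line from~\eqref{inducedreduction}, while the converse will require building a nonempty reduction of~$w$ out of the locally assumed reductions of the finite restrictions~$w\restr I$.

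For the easy direction (the contrapositive of ``all permanent $\Rightarrow$ reduced''), I~would take any nonempty reduction $R$ of~$w$, pick any pair $\{s,s'\}\in R$, and let $i$ be the common index of $w(s)$ and~$w(s')$. Then for every finite $I\ni i$, the set $\{q\in R\mid w(q)\sub A_I\}$ with its induced order is by~\eqref{inducedreduction} a reduction of $w\restr I$ that still contains $\{s,s'\}$; hence $s$ is not permanent in any $w\restr I$, and so $s$ is not permanent in~$w$ in the sense of the infinite-word definition.

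For the forward direction I~would again argue contrapositively: assuming that $s\in S$ with $w(s)=\ve_i$ (WLOG) is not permanent in~$w$, I~aim to construct a nonempty reduction of~$w$. By hypothesis, for every finite $I\ni i$ there is a reduction $R_I$ of $w\restr I$ pairing $s$ with some $s'_I\in w^{-1}(\ev_i)$. As $w^{-1}(\ev_i)$ is finite, some fixed $s'$ will occur as~$s'_I$ for a family $\mathcal I$ of $I$'s that is cofinal among finite subsets of~$\N$ containing~$i$; WLOG $s<s'$ in~$S$, and set $Y:=(s,s')_S$. The adjacency of $s,s'$ inside each $R_I$ ($I\in\mathcal I$) will force every position in $Y\cap w^{-1}(A_I)$ to be paired strictly before $\{s,s'\}$ in~$R_I$, and a short non-crossing observation---two pairs of a single reduction cannot cross, since each would then have to strictly precede the other---confines each such partner to~$Y$. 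I~would then diagonalize over an enumeration $Y=\{y_1,y_2,\ldots\}$, successively refining $\mathcal I$ so that the partner of $y_k$ in $R_I$ stabilizes to some $\pi(y_k)\in Y$; this is possible because each partner ranges over a fixed finite set. Reciprocity of pairing passes to the limit to give an involution $\pi\colon Y\to Y$ with $w(y)$ and $w(\pi(y))$ inverse, and the non-crossing observation applied pair-by-pair inside each~$R_I$ implies that $\{\{y,\pi(y)\}\mid y\in Y\}$ is non-crossing on~$Y$.

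To finish, I~would order these pairs innermost first---extending the partial order ``$[y,\pi(y)]_Y\subsetneq[y',\pi(y')]_Y$ implies $\{y,\pi(y)\}<\{y',\pi(y')\}$'' to any linear extension---and place $\{s,s'\}$ above them all. Non-crossing then ensures that every position of~$Y$ strictly between $y$ and $\pi(y)$ is paired within $[y,\pi(y)]_Y$, and hence dissolved strictly before $\{y,\pi(y)\}$; each pair's adjacency condition therefore holds, and after all of~$Y$ has been processed the endpoints $s,s'$ are adjacent, so $\{s,s'\}$ sits legitimately on top. This produces a nonempty reduction of~$w$, contradicting reducedness. The main obstacle will be the extraction of the limit involution~$\pi$ together with the verification that it is non-crossing: both rely on the finiteness of $w^{-1}(a)$ for each letter~$a$ (which feeds the pigeonhole choice of $s'$ and the diagonal stabilization of partners) and on the elementary but indispensable fact that two pairs of a single reduction cannot cross, which is what ultimately permits the finite-level reductions to be glued into a single reduction of~$w$.
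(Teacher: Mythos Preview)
Your proof is correct; in the hard direction it takes a route that differs in its mechanics from the paper's. For ``all permanent $\Rightarrow$ reduced'' both arguments are identical, via~\eqref{inducedreduction}. For the converse, the paper works with the chain $I_n=\{0,\dots,n\}$ and uses K\"onig's infinity lemma on the finitely-branching tree of reductions of the~$w\restr I_n$ to obtain a coherent sequence $R_N, R_{N+1},\dots$ (each a reduction of $w\restr I_n$ deleting~$s$, with $R_n$ the restriction of~$R_{n+1}$); the limit $R:=\bigcup_m\bigcap_{n\ge m} R_n$ is then shown to be a reduction of~$w$ under the ordering inherited from the~$R_n$. You instead stabilize partners one position at a time by pigeonhole over cofinal families of finite~$I$, localize the construction entirely to the interval~$[s,s']_S$, and rebuild the ordering from scratch as an innermost-first linear extension of the nesting order on the resulting non-crossing matching. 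The paper's K\"onig argument buys coherence of the entire finite reductions at once, so no separate reciprocity or non-crossing check is needed; your approach is more hands-on and makes explicit the pleasant structural fact that any non-crossing perfect matching on an interval, ordered by containment of spans, already constitutes a reduction. Both rest on the same two ingredients you flagged: finiteness of each $w^{-1}(a)$, and the observation that two pairs of a single reduction cannot cross.
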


\begin{proof}
  The assertion is clear for finite words, so let $w\colon S\to A$ be an infinite word. Suppose first that all positions of $w$ are permanent. Let $R$ be any reduction of~$w$; we will show that $R=\es$.
Let $s$ be any position of~$w$. As $s$ is permanent, there is a finite $I\sub\N$ such that $w(s)\in A_I$ and $s$ is not deleted in any reduction of~$w\restr I$. By~\eqref{inducedreduction}, the pairs in $R$ whose elements map to~$A_I$ form a reduction of~$w\restr I$, so $s$ does not lie in such a pair. As $s$ was arbitrary, this proves that $R=\es$.

Now suppose that $w$ has a non-permanent position~$s$. We shall construct a non-trivial reduction of~$w$. For all $n\in\N$ put $S_n := \{s\in S \mid w(s)\in A_{\{0,\dots,n\}}\}$; recall that these are finite sets. Write $w_n$ for the finite word $w\restr I$ with $I=\{0,\dots,n\}$, the restriction of $w$ to~$S_n$. For any reduction $R$ of~$w_{n+1}$, the set $R^- := \big\{\{t,t'\}\in R\mid t,t'\in S_n\big\}$ with the induced ordering is a reduction of~$w_n$, by~\eqref{inducedreduction}.

Pick $N\in\N$ large enough that $s\in S_N$. Since $s$ is not permanent in~$w$, every $w_n$ with $n\ge N$ has a reduction in which $s$ is deleted. As $w_n$ has only finitely many reductions, K\"onig's infinity lemma~\cite{DiestelBook05} gives us an infinite sequence $R_N, R_{N+1}, \dots$ in which each $R_n$ is a reduction of~$w_n$ deleting~$s$, and $R_n = R_{n+1}^-$ for every~$n$. Inductively, this implies:
  \begin{txteq}\label{pairs}
  For all $m\le n$, we have $R_m = \big\{\{t,t'\}\in R_n\mid t,t'\in S_m\big\}$, and the ordering of $R_m$ on this set agrees with that induced by~$R_n$.
   \end{txteq}
Let $s'\in S$ be such that $\{s,s'\}\in R_n$ for some~$n$; then $\{s,s'\}\in R_n$ for every $n\ge N$, by~\eqref{pairs}.

Our sequence $(R_n)$ divides the positions of~$w$ into two types. Call a position $t$ of $w$ \emph{essential} if there exists an $n\ge N$ such that $t\in S_n$ and $t$ remains undeleted in~$R_n$; otherwise call $t$ \emph{inessential}. Consider the set

$$R\assign\bigcup_{m\ge N}\bigcap_{n\ge m}R_n$$
 of all pairs of positions of $w$ that are eventually in~$R_n$. Let $R$ be endowed with the ordering $p < q$ induced by all the orderings of $R_n$ with $n$ large enough that $p,q\in R_n$; these orderings are compatible by~\eqref{pairs}. Note that $R$ is non-empty, since it contains~$\{s,s'\}$. We shall prove that $R$ is a reduction of~$w$.

We have to show that the elements of each $p\in R$, say $p = \{t_1,t_2\}$ with $t_1 < t_2$, are adjacent in $S\sm\bigcup\{q\in R \mid q<p\}$. Suppose not, and pick $t\in (t_1,t_2)_S\sm\bigcup\{q\in R \mid q<p\}$. If $t$ is essential, then $t$ is a position of $w_n$ remaining undeleted in~$R_n$ for all large enough~$n$. But then $\{t_1,t_2\}\notin R_n$ for all these~$n$, contradicting the fact that $\{t_1,t_2\}\in R$. Hence $t$ is inessential. Then $t$ is deleted in every $R_n$ with $n$ large enough. By~\eqref{pairs}, the pair $\{t,t'\}\in R_n$ deleting~$t$ is the same for all these~$n$, so $\{t,t'\} =: p'\in R$. By the choice of~$t$, this implies $p'\not< p$. For $n$ large enough that $p,p'\in R_n$, this contradicts the fact that $t_1,t_2$ are adjacent in $S_n\sm\bigcup\{q\in R_n, q<p\}$, which they are since $R_n$ is a reduction of~$w_n$.
\end{proof}

Note that a word can consist entirely of non-permanent positions and still reduce to a non-empty word: the word $\ve_0\ev_0\ve_0$ is again an example.

Lemma~\ref{permanent} offers an easy way to check whether an infinite word is reduced. In general, it can be hard to prove that a given word $w$ has no non-trivial reduction, since this need not have a `first' cancellation. By Lemma~\ref{permanent} it suffices to check whether every position becomes permanent in some large enough but finite~$w\restr I$.

Similarly, it can be hard to prove that two words reduce to the same word. The following lemma provides an easier way to do this, in terms of only the finite restrictions of the two words:

\begin{lemma}\label{reduceonI}
  Two words $w,w'$ can be reduced to the same (abstract) word if and only if $r(w\restr I) = r(w'\restr I)$ for every finite $I\sub\N$.
\end{lemma}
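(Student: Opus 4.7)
The plan is to prove the two directions separately, relying chiefly on \eqref{inducedreductioninformal} and Lemma~\ref{permanent}.

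\emph{Forward direction.} Suppose $w$ and $w'$ both reduce to a common word~$u$. For any finite $I\sub\N$, \eqref{inducedreductioninformal} shows that $u\restr I$ is also obtainable by first restricting $w$ to $I$ and then reducing; in particular $w\restr I$ admits a reduction to~$u\restr I$. Continuing this reduction to the unique reduced form of the finite word~$w\restr I$ yields $r(w\restr I)=r(u\restr I)$. The symmetric argument for~$w'$ gives $r(w'\restr I)=r(u\restr I)$, and hence $r(w\restr I)=r(w'\restr I)$.

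\emph{Backward direction.} Apply Lemma~\ref{lemma:reduce} to reduce $w$ and $w'$ to reduced words $u$ and~$u'$. By the forward direction, $r(u\restr I)=r(u'\restr I)$ for every finite $I\sub\N$. It then suffices to show $u=u'$ as abstract words, for then both $w$ and $w'$ reduce to this common word. Write $\mathrm{perm}(u\restr I)$ for the positions of $u$ surviving the unique reduction of the finite word $u\restr I$, so that $r(u\restr I)$ is naturally $u\restr\mathrm{perm}(u\restr I)$; similarly for~$u'$. The abstract equality $r(u\restr I)=r(u'\restr I)$ supplies a unique order- and letter-preserving bijection $\psi_I\colon\mathrm{perm}(u\restr I)\to\mathrm{perm}(u'\restr I)$. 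Since $u$ is reduced, Lemma~\ref{permanent} together with the monotonicity of permanence gives $S=\bigcup_I\mathrm{perm}(u\restr I)$, where $S$ is the position set of~$u$; likewise $S'=\bigcup_I\mathrm{perm}(u'\restr I)$. The desired bijection $S\to S'$ will be $\phi:=\bigcup_I\psi_I$.

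\emph{Main obstacle.} The delicate step is verifying that the $\psi_I$ cohere, so that $\phi$ is well defined. Concretely I must check that for $I\sub J$ the bijection $\psi_J$ restricts on $\mathrm{perm}(u\restr I)$ to~$\psi_I$. The plan is to apply \eqref{inducedreductioninformal} twice: once to $u\restr J$ and once to $u'\restr J$, with restriction to~$I$, so that both $r(u\restr J)\restr I$ and $r(u'\restr J)\restr I$ further reduce to $r(u\restr I)$ and $r(u'\restr I)$ respectively. The identification $r(u\restr J)=r(u'\restr J)$ restricts to an identification $r(u\restr J)\restr I=r(u'\restr J)\restr I$ of abstract finite words, and passing to reduced form commutes with this identification. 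Hence the restriction of $\psi_J$ to $\mathrm{perm}(u\restr I)$ is itself an order- and letter-preserving bijection onto $\mathrm{perm}(u'\restr I)$, which by uniqueness of $\psi_I$ equals $\psi_I$ as required.
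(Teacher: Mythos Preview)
Your forward direction is fine and matches the paper's. The backward direction, however, contains a genuine gap.

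You write $\mathrm{perm}(u\restr I)$ for ``the positions of $u$ surviving the unique reduction of the finite word $u\restr I$'', and treat this as a well-defined subset of~$S$. It is not. The reduced word $r(u\restr I)$ is unique only as an abstract word; different reductions of $u\restr I$ can leave different position sets surviving. The paper itself stresses this point with the example $\ve_0\ev_0\ve_0$, which reduces to~$\ve_0$ with either the first or the third position surviving. Such ambiguity arises even when $u$ is reduced: for instance $u=\ve_0\ve_1\ev_0\ve_2\ve_0$ is reduced, yet $u\restr\{0\}=\ve_0\ev_0\ve_0$. Worse, the surviving sets need not cohere across different~$J$: here $r(u\restr\{0,1\})$ forces the surviving $\ve_0$-position to be the first, while $r(u\restr\{0,2\})$ forces it to be the last. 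So no single choice of $\mathrm{perm}(u\restr\{0\})$ sits inside $\mathrm{perm}(u\restr J)$ for all $J\supseteq\{0\}$, and the union $\bigcup_I\psi_I$ cannot be a well-defined function in the way you claim.

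The paper's proof confronts exactly this non-uniqueness. It considers, for each finite~$I$, \emph{all} possible domains $S_I\sub S$ of $r(v\restr I)$ and $S'_I\sub S'$ of $r(v'\restr I)$, together with all order-isomorphisms between them. Since there are only finitely many such maps for each~$I$, and each map at level~$J$ restricts (via~\eqref{inducedreductioninformal}) to one at level~$I\sub J$, K\"onig's infinity lemma yields a coherent chain $\phi_0\sub\phi_1\sub\cdots$ along $I=\{0,\dots,n\}$, whose union is the desired bijection. Your argument is missing precisely this compactness step; the ``uniqueness of~$\psi_I$'' you invoke would need the domains to be uniquely determined first, and they are not.
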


\begin{proof}
The forward implication follows easily from~\eqref{inducedreductioninformal}. Conversely, suppose that $r(w\restr I) = r(w'\restr I)$ for every finite $I\sub\N$. By Lemma~\ref{lemma:reduce}, $w$~and $w'$ can be reduced to reduced words $v$ and~$v'$, respectively. Our aim is to show that $v=v'$, that is to say, to find an order-preserving bijection $\phi\colon S\to S'$ between the domains $S$ of $v$ and $S'$ of~$v'$ such that $v = v'\circ\phi$. For every finite~$I$, our assumption and the forward implication of the lemma yield
 $$r(v\restr I)  = r(w\restr I) = r(w'\restr I) = r(v'\restr I)\,.$$
 Hence for every possible domain $S_I\sub S$ of $r(v\restr I)$ and every possible domain $S'_I\sub S'$ of~$r(v'\restr I)$ there exists an order isomorphism $S_I\to S'_I$ that commutes with $v$ and~$v'$. For every~$I$, there are only finitely many such maps $S_I\to S'_I$, since there are only finitely many such sets $S_I$ and~$S'_I$. And for $I\sub J$, every such map $S_J\to S'_J$ induces such a map $S_I\to S'_I$ with $S_I\sub S_J$ and $S'_I\sub S'_J$, by \eqref{inducedreductioninformal}. Hence by the infinity lemma~\cite{DiestelBook05} there exists a sequence $\phi_0\sub \phi_1\sub\dotsb$ of such maps $\phi_n\colon S_{\{0,\dots,n\}}\to S'_{\{0,\dots,n\}}$, whose union~$\phi$ maps all of $S$ onto~$S'$, since by Lemma~\ref{permanent} every position of $v$ and of $v'$ is permanent.
\end{proof}

With Lemma~\ref{reduceonI} we are now able to prove:

\begin{lemma}\label{uniquereduced}
  Every word reduces to a unique reduced word. 
\end{lemma}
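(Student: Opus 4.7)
The plan is to derive uniqueness as an almost immediate consequence of Lemma~\ref{reduceonI}, once existence is supplied by Lemma~\ref{lemma:reduce}. Given a word $w\colon S\to A$, existence of some reduced word it reduces to is exactly Lemma~\ref{lemma:reduce}. So suppose $w$ reduces to two reduced words $v$ and~$v'$; I~want to show $v=v'$ as abstract words.

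First I would invoke the forward direction of Lemma~\ref{reduceonI} (which in turn comes from~\eqref{inducedreductioninformal}): since $w$ reduces to~$v$, for every finite $I\sub\N$ we have $r(w\restr I)=r(v\restr I)$, and analogously $r(w\restr I)=r(v'\restr I)$. Chaining these gives
\[
 r(v\restr I)=r(v'\restr I)\quad\text{for every finite } I\sub\N.
\]
Now I would apply Lemma~\ref{reduceonI} in the other direction to the pair $(v,v')$: this condition on all finite restrictions is enough to conclude that $v$ and $v'$ can be reduced to a common (abstract) reduced word~$u$.

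Finally I would use Lemma~\ref{permanent} to observe that a word which is already reduced admits only the trivial reduction (the empty set), since every one of its positions is permanent and hence cannot be deleted by any reduction. Therefore the only reduced word to which $v$ reduces is $v$ itself, and likewise for~$v'$, forcing $u=v=v'$.

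I do not expect any serious obstacle here: the substantive work has already been carried out in Lemmas~\ref{permanent} and~\ref{reduceonI}, and the only thing to be careful about is the last step, namely ruling out that a reduced word could be reduced further in a non-trivial way. That is handled cleanly by the characterisation of reducedness via permanence in Lemma~\ref{permanent}, so the whole argument amounts to a short chain of citations.
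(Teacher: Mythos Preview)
Your proof is correct and follows essentially the same route as the paper's: existence via Lemma~\ref{lemma:reduce}, then Lemma~\ref{reduceonI} in both directions to show that any two reduced words $v,v'$ obtained from $w$ reduce to a common word, and finally that a reduced word reduces only to itself. The one minor remark is that your appeal to Lemma~\ref{permanent} in the last step is not needed: by definition a reduced word has no nonempty reduction, so it reduces only to itself directly.
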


\begin{proof}
  By Lemma~\ref{lemma:reduce}, every word $w$ reduces to some reduced word~$w'$. Suppose there is another reduced word $w''$ to which $w$ can be reduced. By the easy direction of Lemma~\ref{reduceonI}, we have
 $$r(w'\restr I) = r(w\restr I) = r(w''\restr I)$$
 for every finite~$I\sub\N$. By the non-trivial direction of Lemma~\ref{reduceonI}, this implies that $w'$ and~$w''$ can be reduced to the same word. Since $w'$ reduces only to~$w'$ and $w''$ reduces only to~$w''$, this must be the word $w'=w''$.
\end{proof}

As in the case of finite words, we denote the unique reduced word that a word $w$ reduces to by~$r(w)$. The set of reduced words now forms a group
 $$F_\infty = F_\infty(\vA)\,,$$
with multiplication defined as $(w_1,w_2)\mapsto r(w_1w_2)$, identity the empty word~$\es$, and inverses $w^-$ of $w\colon S\to A$ defined as the map on the same~$S$, but with the inverse ordering, satisfying $\{w(s),w^-(s)\} = \{\ve_i,\ev_i\}$ for some~$i$ for every~${s\in S}$. (Thus, $w^-$~is $w$ taken backwards, replacing each letter with its inverse.) Note that the proof of associativity requires an application of Lemma~\ref{uniquereduced}.

As indicated earlier, we claim that $F_\infty$ embeds canonically in the inverse limit $F^*$ of the groups $F_I$. By~\eqref{inducedreductioninformal}, the maps $h_I\colon w\mapsto r(w\restr I)$ are homomorphisms $F_\infty\to F_I$ that commute with the homomorphisms $F_J\to F_I$ from the inverse system, so they define a homomorphism
   $$h\colon F_\infty\to F^*$$
satisfying $\pi_I\circ h = h_I$ for all~$I$ (where $\pi_I$ is the projection $F^* \to F_I$). To show that $h$ is injective, consider an element $w$ of its kernel. For every~$I$, we have
 $$r(w\restr I) = h_I (w) = \pi_I(h(w)) = \pi_I({\rm id}_{}) = \es,$$
where $\rm id$ denotes the identity in $F^*$ and $\es$ that of~$F_I$, the empty word. Thus, $w$~is a reduced word which has no permanent positions. By Lemma~\ref{permanent}, this means that $w=0$. Thus, $h$~is a group embedding of~$F_\infty$ in~$F^*$, as claimed.

We remark that $h$ is never surjective. Indeed, while every letter occurs only finitely often in a given word, there are elements of $F^*$ whose projections to the $F_I$ contain some fixed letter unboundedly often; such an element will not lie in the image of~$h$. (For example, the words $\ve_1\ve_0\ve_1\ev_0\    \dots\  \ve_i\ve_0\ve_i\ev_0 \in F_I$ for $I = \{1,\dots,i\}$ define such an element of~$F^*$.) However, these are clearly the only elements of $F^*$ that $h$ misses: the subgroup $h(F_\infty)$ of $F^*$ consists of precisely those elements $(w_I)$ of $F^*$ that are \emph{bounded} in the sense that for every letter $\ve\in A$ there exists a $k\in\N$ such that $|w_I^{-1}(\ve)|\le k$ for all~$I$.

Theorem~\ref{pi1thm}$\,$\eqref{FinfProjLim} below summarizes what we have shown so far.

\section{\boldmath Embedding $\pi_1(H)$ in~$F_{\infty}$}\label{sec:pi1toFinf}

Let $G$ be a locally finite connected graph. Let $H$ be a standard subspace of~$|G|$, and let $T$ be a fixed topological spanning tree of~$H$. If $T$ has only finitely many chords in~$H$, then $H$ is homotopy equivalent to a finite graph---apply Lemma~\ref{TST} to the maximal topological subtrees not meeting the interior of an arc between two chords---and all we shall prove below will be known. We therefore assume that $T$ has infinitely many chords in~$H$. Enumerate these as $e_0,e_1,\dotsc$, let $\vA := \{\ve_0, \ve_1, \dots\}$ be the set of their natural orientations, and put\looseness=-1
 $$A:= \{\ve_0, \ve_1, \dots\}\cup\{\ev_0, \ev_1,\dots\}\,.$$
 Let
 $$F_\infty = F_\infty(\vA)$$
 be the group of infinite reduced words with letters in~$A$, as defined in Section~\ref{wordsec}.

Unless otherwise mentioned, the endpoints of all paths considered from now on will be vertices or ends, and any homotopies between paths will be relative to~$\{0,1\}$. When we speak of `the passes' of a given path~$\sigma$, without referring to any particular edges, we shall mean the passes of $\sigma$ through chords of~$T$.

Every path $\sigma$ in $H$ defines a word~$w_{\sigma}$ by its passes through the chords of~$T$. Formally, we take as $S$ the set of the domains $[a,b]$ of passes of~$\sigma$, ordered naturally as internally disjoint subsets of~$[0,1]$, and let $w_{\sigma}$ map every $[a,b]\in S$ to the directed chord that $\sigma$ traverses on~$[a,b]$. We call $w_{\sigma}$ the \emph{trace} of $\sigma$. Our aim is to show that $\langle\alpha\rangle\mapsto r(w_\alpha)$ defines a group embedding $\pi_1(H)\to F_\infty$.

For a proof that $\langle\alpha\rangle\mapsto r(w_\alpha)$ is well defined, consider homotopic loops $\alpha\sim\beta$ in~$H$. We wish to show that $r(w_\alpha) = r(w_\beta)$. By Lemma~\ref{reduceonI} it suffices to show that $r(w_{\alpha}\restr I) = r(w_{\beta}\restr I)$ for every finite $I\subset\N$. Consider the space obtained from $H$ by attaching a 2-cell to $H$ for every $j\notin I$, by an injective attachment map from the boundary of the 2-cell onto the fundamental circle of~$e_j$, the unique circle in~$T+e_j$. This space deformation-retracts onto $T\cup\bigcup\{e_i\mid i\in I\}$, and hence is homotopy equivalent by Lemma~\ref{TST} to the wedge sum $W_I$ of $|I|$ circles, one for every~$e_i$. Composing $\alpha$ and $\beta$ with the map $H\to W_I$ from this homotopy equivalence yields homotopic loops $\alpha'$ and $\beta'$ in~$W_I$, whose traces in $F_I$ are $w_{\alpha'} = w_{\alpha}\restr I$ and $w_{\beta'} = w_{\beta}\restr I$. Since $\langle\gamma\rangle\mapsto r(w_\gamma)$ is known to be well defined for wedge sums of finitely many circles, we deduce that
$$r(w_{\alpha}\restr I) = r(w_{\alpha'}) = r(w_{\beta'}) = r(w_{\beta}\restr I)\,.$$
  This completes the proof that $\langle\alpha\rangle\mapsto r(w_\alpha)$ is well defined, and by the uniqueness of reduction it is a homomorphism. For injectivity, we shall prove in Section~\ref{sec:injectivity} the following extension to paths that need not be loops:

\begin{lemma}\label{lemma:reducible}
  Paths $\sigma,\tau$ in $H$ with the same endpoints are homotopic in $H$ if (and only if) their traces reduce to the same word.
\end{lemma}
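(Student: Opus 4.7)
The `only if' direction extends the wedge-sum argument already given in the text for loops: for each finite $I\sub\N$, form a space $H_I\supseteq H$ by attaching 2-cells along the fundamental circles of the chords $e_j$ with $j\notin I$; by Lemma~\ref{TST}, $H_I$ is homotopy equivalent to the wedge $W_I$ of $|I|$ circles. The images of $\sigma,\tau$ in $W_I$ are homotopic rel endpoints (viewing endpoints as fixed basepoints), so by the classical result for finite graphs $r(w_\sigma\restr I)=r(w_\tau\restr I)$ for every~$I$; Lemma~\ref{reduceonI} then yields $r(w_\sigma)=r(w_\tau)$.

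For the `if' direction, it suffices to show that every loop $\rho$ in $H$ with $r(w_\rho)=\es$ is null-homotopic. Indeed, if $\sigma,\tau$ share endpoints and $r(w_\sigma)=r(w_\tau)=:v$, the loop $\rho:=\sigma\cdot\tau^-$ has trace $w_\sigma\cdot w_\tau^-$, whose reduction equals $r(v\cdot v^-)=\es$ (cancelling the reduced word and its inverse from the middle outward, and using that reduction commutes with concatenation via Lemma~\ref{uniquereduced}); once $\rho$ is null-homotopic, we get $\sigma\simeq\tau$.

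To prove this loop claim, fix a maximal reduction $R$ of~$w_\rho$. Since $r(w_\rho)=\es$, every position of $w_\rho$ lies in some pair of $R$. Each pair $p=\{s,s'\}\in R$ with $s<s'$ corresponds to two time intervals $[a_p,b_p]$ and $[a'_p,b'_p]$ on which $\rho$ traverses the orientations $\ve_{i(p)}$ and $\ev_{i(p)}$ of a chord $e_{i(p)}$; from this one checks $\rho(a_p)=\rho(b'_p)$. A short analysis of the adjacency condition in the definition of reduction shows that the pairs of $R$ are \emph{non-crossing}: the enclosing intervals $I_p:=[a_p,b'_p]$ for distinct $p\in R$ are either nested or disjoint. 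Let $D:=\bigcup_{p\in R}\mathrm{int}(I_p)$; then $\rho$ maps $[0,1]\sm D$ into $T$, and by continuity together with non-crossing each maximal open sub-interval $L_k=(c_k,d_k)$ of $D$ satisfies $\rho(c_k)=\rho(d_k)\in T$: a vertex if $L_k=\mathrm{int}(I_p)$ for some (necessarily unique) pair $p$, and an end otherwise, arising because in that case $c_k$ is a limit of chord-pass endpoints through infinitely many distinct chords (using Lemma~\ref{pass} and local finiteness). In particular each $\rho\restr[c_k,d_k]$ is a loop whose trace reduces to~$\es$.

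With this structure in place, apply Lemma~\ref{infhomotopies} to the sequence of intervals $[c_k,d_k]$: homotope each $\rho\restr[c_k,d_k]$ to the constant loop at $\rho(c_k)$ within the image of $\rho\restr[c_k,d_k]$; since $\rho$ already maps $[0,1]\sm D$ into~$T$, the resulting path lies entirely in $T$ and is null-homotopic by Lemma~\ref{TST}. The hard part will be producing these local null-homotopies on each $L_k$. The reduction restricted to $[c_k,d_k]$ again has empty reduced word and the same case distinction recurs, so a naive recursion does not terminate when the nesting forest inside $L_k$ has no maximum---which is precisely what happens when $\rho(c_k)$ is an end. My proposed remedy is to handle the `well-founded' part of the pair-forest by standard recursion (for each pair $p$ with a maximal inner sub-forest, first contract the inner loop $\rho\restr[b_p,a'_p]$ and then cancel the resulting bump $\ve_{i(p)}\ev_{i(p)}$ through an arc in~$T$), and to use the time-injective deformation retraction of Lemma~\ref{TST} at limit stages to collapse a $T$-neighbourhood of the relevant end, thereby flattening an entire ascending chain of nested pair-homotopies at once. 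Verifying that these local ingredients assemble into a globally continuous homotopy---particularly at the endpoints of the $L_k$'s, where $\rho$ visits ends of~$G$---is the delicate step that presumably warrants the separate Section~\ref{sec:injectivity}.
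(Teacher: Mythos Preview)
Your reduction to the loop case and the identification of the non-crossing pair structure are correct and match the paper's setup. The gap is in your proposed construction of the local null-homotopies on each~$L_k$.

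The recursion you sketch is not well-founded: the nesting order~$\le$ on pairs can have infinite strictly descending chains with no minimal element (already in the single-ladder example of Figure~\ref{fig:singleladder}, the pairs $p_0 > p_1 > \dotsb$ have no innermost member), so there is no base case from which to start cancelling. Your remedy of invoking Lemma~\ref{TST} ``at limit stages'' does not apply as stated, since Lemma~\ref{TST} retracts subspaces of~$T$, whereas the segment of~$\rho$ you need to contract at such a stage still traverses infinitely many chords near the end in question. Even if one patches this, the genuine obstacle is continuity of the assembled homotopy in the time variable: at limit times the partial homotopy can jump between distinct ends (this is exactly what happens in Example~\ref{T2}), and these jumps are not repaired by a single retraction in~$T$.

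The paper's proof does \emph{not} recurse on~$\le$. Instead it extends~$\le$ to a carefully chosen total order~$\preceq$ on~$\P$, built from a sequence of maximal $\le$-chains~$M_i$, and cancels pairs one at a time in this linear order. The choice of~$\preceq$ ensures that the resulting homotopy can fail to be continuous only at certain ``bad'' cuts of~$(\P,\preceq)$; the central combinatorial lemma is that there are only \emph{countably} many bad cuts. This allows one to reserve a non-trivial time interval for each bad cut and insert there an explicit correction homotopy (a retraction in~$T$ onto the relevant end), after which a lengthy case analysis verifies global continuity. Your outline supplies none of this machinery, and the decomposition via Lemma~\ref{infhomotopies} into the intervals~$L_k$ does not reduce the difficulty: each non-trivial~$L_k$ reproduces the same problem.

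Two minor points. First, your claim that $\rho$ maps $[0,1]\sm D$ into~$T$ requires the straightening step~\eqref{straight}; as written, $\rho$ may touch inner points of chords without traversing them. Second, your assertion that $\rho(c_k)=\rho(d_k)$ when $L_k$ has no maximal pair is correct, but uses that the pairs contained in~$[c_k,d_k]$ are upward-directed under~$\subseteq$ (so that a single cofinal chain exists); this deserves a line of justification.
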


We remark that the map $\langle\alpha\rangle\mapsto r(w_\alpha)$ will not normally be surjective. For example, a sequence $\ve_0,\ve_1,\dots$ of distinct chords will always be a reduced word, but no loop in $|G|$ can pass through these chords in order if they do not converge to an end. Hence if two ends are non-trivial in~$H$, then by Lemma~\ref{trivial} there is a non-convergent sequence $\ve_0,\ve_1,\dots$ of chords of $T$ in~$H$ (picked alternately from smaller and smaller neighbourhoods of the two ends), which forms a reduced word in $F_\infty(\vA)$ outside the image of our map $\langle\alpha\rangle\mapsto r(w_\alpha)$.

In order to describe the image of this map precisely, let us call a subword $w':= w\restr S'$ of a word $w\colon S\to A$ \emph{monotonic} if $S'$ is infinite and can be written as $S' = \{s_0,s_1,\dots\}$ so that either $s_0 < s_1 < \dots$ or ${s_0 > s_1 > \dots}$~. Let us say that $w'$ \emph{converges} (in~$|G|$) if there exists an end to which every sequence $x_0, x_1,\dots$ with $x_n\in w(s_n)$ for all~$n$ converges. If $w$ is the trace of a path in~$H$, then by the continuity of this path all the monotonic subwords of~$w$---and hence those of~$r(w)$---converge.

We can now summarize our combinatorial description of~$\pi_1(H)$ as follows.

\begin{theorem}\label{pi1thm}
   Let $G$ be a locally finite connected graph, and let $H$ be a standard subspace of~$|G|$. Let $T$ be a topological spanning tree of~$H$, and let $e_0, e_1,\dots$ be its chords in~$H$.
\begin{enumerate}[\rm (i)]
  \item \label{pi1Finf}
    The map $\langle\alpha\rangle\mapsto r(w_\alpha)$ is an injective homomorphism from $\pi_1(H)$ to the group $F_\infty$ of reduced finite or infinite words in $\{\ve_0, \ve_1, \dots\}\cup\{\ev_0, \ev_1,\dots\}$. Its image consists of those reduced words whose monotonic subwords all converge in~$|G|$.
  \item \label{FinfProjLim}
    The homomorphisms $w\mapsto r(w\restr I)$ from $F_\infty$ to~$F_I$ embed $F_\infty$ as a subgroup in~$\varprojlim F_I$. It consists of those elements of~$\varprojlim F_I$ whose projections $r(w\restr I)$ use each letter only boundedly often. (The bound may depend on the letter.)
\end{enumerate}
\end{theorem}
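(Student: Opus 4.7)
The plan is to assemble part (i) from results already in hand plus a single construction, and to obtain part (ii) by collecting what was established in the discussion just before the theorem. The discussion preceding the theorem already shows that $\langle\alpha\rangle\mapsto r(w_\alpha)$ is a well-defined group homomorphism $\pi_1(H)\to F_\infty$. For injectivity I would simply invoke Lemma~\ref{lemma:reducible}: if $r(w_\alpha)=r(w_\beta)$ for loops $\alpha,\beta$ based at the same point, then $\alpha\sim\beta$. The forward direction of the image characterisation---that every reduced trace has only converging monotonic subwords---is observed in the text above the theorem: by the continuity of a path and of reduction, any monotonic sequence of passes through chords yields a convergent sequence of points in $|G|$, whose limit must be an end.

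The substantive remaining task for (i) is the converse direction of the image characterisation: given a reduced word $w\colon S\to A$ whose monotonic subwords all converge in $|G|$, to construct a loop $\alpha$ in $H$ with $r(w_\alpha)=w$. I would embed $S$ as a set of pairwise internally disjoint, non-degenerate closed intervals $J_s\sub[0,1]$ (possible because $S$ is countable and, for each letter $a$, $w^{-1}(a)$ is finite), with the order of the $J_s$ matching that of~$S$. On each $J_s$ let $\alpha$ traverse the chord $w(s)$ in its prescribed direction. Whenever two positions $s<s'$ are adjacent in $S$, let $\alpha$ run along the unique $T$-arc between the relevant endvertices of $w(s)$ and $w(s')$ on the interval between $J_s$ and $J_{s'}$. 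Every remaining $t\in[0,1]$ is a limit of the right endpoints of a monotonic sequence of $J_s$'s; by hypothesis the corresponding chords converge to a unique end $\omega\in|G|$, and I set $\alpha(t):=\omega$.

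Continuity of $\alpha$ inside a $J_s$ or on a $T$-arc between consecutive $J_s,J_{s'}$ is immediate; continuity at a limit point $t$ is the content of the construction. Given a basic open neighbourhood $\hat C(S_0,\alpha(t))$ of~$\omega=\alpha(t)$, all but finitely many of the chords in the converging monotonic subword lie in that neighbourhood; Lemma~\ref{shortarcs} then gives the same for the connecting $T$-arcs, so $\alpha$ maps a sufficiently small interval around $t$ into the neighbourhood. Because $T\sub H$ is closed and contains all chord endvertices and limit ends involved, $\alpha$ lands in~$H$. Finally, reading off the trace of $\alpha$ produces $w$ with at most some additional adjacent inverse pairs inserted by backtracking inside the $T$-arcs; reducing those recovers $w$ itself, and since $w$ is reduced, $r(w_\alpha)=w$.

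Part (ii) is essentially a restatement of what has been shown between Lemma~\ref{uniquereduced} and the theorem. The maps $h_I\colon w\mapsto r(w\restr I)$ are homomorphisms $F_\infty\to F_I$ that commute with the bonding maps by~\eqref{inducedreductioninformal}, so they induce $h\colon F_\infty\to\varprojlim F_I=F^*$; Lemma~\ref{permanent} gives injectivity, since any nonempty reduced word has a position which is permanent in some finite restriction and therefore survives in the corresponding coordinate of $h(w)$. The description of the image as the bounded elements of $F^*$ is the observation, also recorded in the text, that every single word $w\in F_\infty$ uses each letter only finitely often whereas a coherent family $(w_I)\in F^*$ may use a fixed letter unboundedly often; conversely, a uniformly bounded such family can be assembled, letter by letter, into a single word of~$F_\infty$ mapping to it. The main obstacle in the entire proof is the continuity verification in the construction of $\alpha$: the order type of $S$ may be arbitrary countable, so the limit set of the $J_s$ in $[0,1]$ can be intricate, and it is precisely the \emph{monotonic-convergence} hypothesis, combined with Lemma~\ref{shortarcs}, that keeps the construction continuous at every such limit.
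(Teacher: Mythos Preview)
Your treatment of part~(ii), and of the well-definedness, injectivity, and forward inclusion in part~(i), matches the paper's proof. The gap is in your construction of~$\alpha$ for the converse direction of the image characterisation.

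The claim that at every remaining limit point $t$ the approaching chords converge to \emph{a unique end} is false in general. Consider $S$ ordered as $s_0 < s_2 < s_4 < \dotsb \mid \dotsb < s_5 < s_3 < s_1$, with the even-indexed chords converging to an end~$\omega_1$ and the odd-indexed ones to a different end~$\omega_2$. Every monotonic subword of~$w$ converges, so the hypothesis holds. But if the intervals $J_{s_{2k}}$ accumulate at some point $t$ from the left and the $J_{s_{2k+1}}$ from the right, there is no value you can assign to~$\alpha(t)$ making $\alpha$ continuous there: the left-hand limit is~$\omega_1$ and the right-hand limit is~$\omega_2$. Your construction leaves no room to insert a $T$-arc from $\omega_1$ to~$\omega_2$.

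The paper handles this by introducing \emph{critical partitions} of~$S$: bipartitions $(S^-,S^+)$ into a down-set and an up-set for which the limiting points $z^-(P)$ and $z^+(P)$ differ. The key lemma, proved using the monotonic-convergence hypothesis and a separating-vertex-set argument, is that there are only \emph{countably many} critical partitions. One then reserves, alongside the intervals $I_n$ for the positions of~$S$, a further countable family of padding intervals~$J_n$, one for each critical partition; on each~$J_n$ the path~$\alpha$ runs along the $T$-arc from $z^-$ to~$z^+$. The continuity proof then has real content and is carried out carefully via an auxiliary statement~\eqref{precontinuous} and Lemma~\ref{shortarcs}. Without this countability argument and the padding intervals, your construction cannot be made continuous.

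A minor point: the connecting $T$-arcs traverse no chords, so the trace of your~$\alpha$ would be exactly~$w$, not $w$ with extra inverse pairs inserted; and~$\alpha$ as constructed need not be a loop, so one must append a return path through~$T$ at the end (this does not alter the trace).
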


\begin{proof}
  \eqref{pi1Finf} We already saw that $\langle\alpha\rangle\mapsto r(w_\alpha)$ is a homomorphism, and injectivity follows from Lemma~\ref{lemma:reducible} (which will be proved in Section~\ref{sec:injectivity}). We have also seen that for every loop~$\alpha$ in~$H$ all the monotonic subwords of $r(w_\alpha)$ converge in~$|G|$. It remains to show the converse: that if all the monotonic subwords of a reduced word $w$ converge, then there is a loop $\alpha$ in $H$ such that $w=r(w_{\alpha})$.

  We prove the following more general fact: If $w$ is a word (not necessarily reduced) whose monotonic subwords all converge, then $w$ is the trace of a loop in~$H$. So let $w\colon S\to A$ be such a word. Enumerate~$S$ as $s_0, s_1,\dots$. We will inductively choose disjoint closed intervals $I_n\sub [0,1]$ ordered correspondingly, i.e.\ so that $I_m$ precedes $I_n$ in $[0,1]$ whenever $s_m < s_n$. For each~$n$, we will let $\alpha_n$ be an order-preserving homeomorphism from $I_n$ to the oriented chord~$w(s_n)$. We will then extend the union of all the $\alpha_n$ to a loop $\alpha\colon [0,1]\to H$.

In order that such a continuous extension $\alpha$ exist, we have to take some precautions when we choose the~$I_n$. For example, suppose that the chords $w(s_0),w(s_2),\dotsc$ converge to one end, while the chords $w(s_1),w(s_3),\dotsc$ converge to another end. If $S$ is ordered as $s_0 < s_2 < \dotsb \vert \dotsb < s_3 < s_1$ (note that every monotonic subword of $w$ converges), there may be a point $x\in [0,1]$ such that every interval around $x$ contains all but finitely many of the intervals~$I_n$. In this case, any extension of $\bigcup_n \alpha_n$ will fail to be continuous at~$x$. In order to prevent this, we shall first formalize such critical situations in terms of partitions of~$S$, then prove that there are only countably many of them, reserve open intervals as padding around potentially critical points such as~$x$, and finally choose the $I_n$ so as to avoid these intervals.

Consider a partition $P$ of $S$ into non-empty parts $S^-(P)$ and~$S^+(P)$, such that $s^-<s^+$ whenever $s^-\in S^-(P)$ and $s^+\in S^+(P)$. Note that for all co\-final sequences in $S^-(P)$, finite or infinite, the final vertices of the corresponding chords of $T$ converge to a common point $z^-(P)\in H$: otherwise there would be a monotonic subword of $w$ that does not converge in~$|G|$. Likewise, there is a point $z^+(P)\in H$ such that for all coinitial sequences in $S^+(P)$ the first vertices of the corresponding chords converge to~$z^+(P)$. We call $P$ \emph{critical} if $z^-(P)\not= z^+(P)$.

Let us show that there are only countably many critical partitions~$P$. If not, there are uncountably many for which neither $S^-(P)$ has a greatest element nor $S^+(P)$ has a least element. For each such $P$, both $z^-(P)$ and $z^+(P)$ are ends. Let $W(P)$ be a finite set of vertices that separates $z^-(P)$ from $z^+(P)$ in~$|G|$. As $G$ is countable, there is a set $W$ such that $W=W(P)$ for uncountably many~$P$. Pick a sequence $P_0,P_1,\dotsc$ of critical partitions with $W(P_i)=W$, and so that either $S^-(P_0)\subsetneq S^-(P_1)\subsetneq \dots$ or $S^+(P_0)\subsetneq S^+(P_1)\subsetneq \dots$. We assume that $S^-(P_0)\subsetneq S^-(P_1)\subsetneq \dots$, the other case being analogous. To obtain a contradiction, let us use this sequence to construct a non-convergent monotonic subword of~$w$.

  Choose $s^-_0\in S^-(P_0)$ and $s^+_0\in S^+(P_0)\cap S^-(P_1)$ so that $W$ separates $w(s^-_0)$ from $w(s^+_0)$ in~$G$; this is possible, since $W$ separates $z^-(P_0)$ from $z^+(P_0)$. Then for $i=1,2,\dotsc$ in turn choose $s^-_i\in S^-(P_i)$ and $s^+_i\in S^+(P_i)\cap S^-(P_{i+1})$ so that $s^-_i>s^+_{i-1}$ and $W$ separates $w(s^-_i)$ from $w(s^+_i)$ in~$G$. Then $w\restr \{s^-_0,s^+_0,s^-_1,s^+_1,\dotsc\}$ is a monotonic subword of $w$ that does not converge in~$|G|$, since $W$ separates all the pairs $w(s^-_i),w(s^+_i)$. This completes the proof that there are only countably many critical partitions; enumerate them as $P_0,P_1,\dotsc$.

We now construct~$\alpha$. Inductively choose disjoint, closed, non-trivial intervals $I_n,J_n\sub[0,1]$ so that $I_m$ precedes $I_n$ on $[0,1]$ whenever $s_m<s_n$, and so that $I_m$ precedes $J_n$ if and only if $s_m\in S^-(P_n)$. For each~$n$, let $\alpha_n$ be an order-preserving homeomorphism from $I_n$ to the oriented chord~$w(s_n)$. Extend the union of all the~$\alpha_n$ to a loop~$\alpha$, as follows. Consider the connected components $I$ of $[0,1]\sm \bigcup I_n$. These are again intervals, possibly trivial; we call them \emph{connecting intervals}. We shall first define $\alpha$ on the boundary points $a\le b$ of each~$I$, and then extend it continuously to map $\overline I$ onto the arc $\alpha(a)T\alpha(b)$. To help with our subsequent proof that $\alpha$ is continuous, let us make sure when we choose $\alpha(a)$ and $\alpha(b)$ that they satisfy the following for $x=a$ or $x=b$:
\begin{txteq}\label{precontinuous}
  For every boundary point $x$ of a connecting interval, and every neighbourhood $U$ of $\alpha(x)$, there is an $\eps>0$ such that $\alpha(y)\in U$ whenever $y\in(x-\eps,x+\eps)$ lies in an interval $I_n$.
\end{txteq}

Suppose first that $I$ is not an initial or final segment of $[0,1]$, i.e.,\ that the boundary points $a,b$ of $I$ satisfy $0<a\le b<1$. Now the sets
 $$S^-\assign\{s_n \mid I_n\text{ precedes }I\}\text{ and }
   S^+\assign\{s_n \mid I\text{ precedes }I_n\}$$
  form a partition $P$ of $S$ into non-empty parts. If $P$ is critical, then $P=P_m$ for some $m$ and hence $J_m\sub I$ by the choice of $I$ as a component of $[0,1]\sm\bigcup I_n$. In particular, $I$ is non-trivial in this case. Let $\alpha$ map $a$ to $z^-(P)$ and $b$ to $z^+(P)$. (Note that if one of these points, $a$~say, is a boundary point of some $I_n$, then $\alpha_n(a)=z^-(P)$, so $\alpha$ coincides with $\alpha_n$ on $I_n$.) If $P$ is not critical, we have $z^-(P)=z^+(P)$ and let $\alpha$ map $a$ and $b$ to this point. In both these cases, \eqref{precontinuous} follows easily from the definition of $z^-(P)$ and~$z^+(P)$.

  Now consider the case $a=0$. If $S$ has a least element~$s_n$, then $b$ is a boundary point of $I_n$ on which $\alpha$ is already defined. Otherwise, $S$~has a coinitial sequence $\dotsc < s_{n_1} < s_{n_0}$, in which case the restriction of $w$ to this sequence is a monotonic subword of~$w$; this converges in $|G|$ to some end~$\omega$, and we put $\alpha(b) =\omega$. Note that $\omega$ is independent of the choice of the coinitial sequence of~$S$, as otherwise there would be a monotonic subword of $w$ that does not converge. In both cases, our choice of $\alpha(b)$ satisfies~\eqref{precontinuous}. As $\alpha(a)$, if $a\not=b$, we choose any vertex or end in~$H$, satisfying~\eqref{precontinuous} trivially. In the case of $b=1$, we proceed analogously.

  For each~$I$, we now extend $\alpha$ to $\overline I\to \alpha(a)T\alpha(b)$ as planned. In particular, if $\alpha(a)=\alpha(b)$ we let $\alpha$ map all of $I$ to that point.

It remains to check that $\alpha$ is continuous at boundary points $x$ of intervals $I_n$ or of connecting intervals. Suppose first that $x$ is the boundary point of a connecting interval. Let $x_0,x_1,\dots$ be a sequence of points in $[0,1]$ converging to~$x$. If all but finitely many of these lie in intervals~$I_n$, their images under $\alpha$ converge to $\alpha(x)$ by~\eqref{precontinuous}. If not, we may assume that each $x_i$ lies in some connecting interval~$I^i = [y_i,z_i]$. At most one of these intervals~$I$ contains infinitely many~$x_j$ (because then it contains~$x$), whose values then converge to~$\alpha(x)$ by the continuity of~$\alpha\restr I$. Disregarding these $x_j$, we may thus assume that each $I^i$ contains only finitely many~$x_j$.

Let us show that the sequence $\alpha(y_0), \alpha(z_0), \alpha(y_1), \alpha(z_1),\dots$ converges to~$\alpha(x)$.
Let a neighbourhood $U\assign\hat C(S,\alpha(x))$ of $\alpha(x)$ be given, and let $\eps$ be as provided by~\eqref{precontinuous}. For all but finitely many $i$ we have $y_i,z_i\in(x-\eps,x+\eps)$, and we claim that $\alpha(y_i),\alpha(z_i)\in U$ for such $i$. By definition of~$y_i$ (the case of $z_i$ is analogous), there is a sequence of boundary points of intervals $I_n$ that converges to~$y_i$, and we may choose this sequence in~$(x-\eps,x+\eps)$. By~\eqref{precontinuous}, $\alpha$~maps these points to vertices converging to~$\alpha(y_i)$. By our choice of~$\eps$, these vertices lie in~$U$. As every sequence of vertices in $U$ converges to a point in~$U$, we obtain $\alpha(y_i)\in U$ as desired.
We now apply Lemma~\ref{shortarcs} to the sequence $\alpha(y_0), \alpha(z_0), \alpha(y_1), \alpha(z_1),\dotsc$. By the lemma, the entire arcs $\alpha(I^i) = \alpha(y_i) T \alpha(z_i)$ converge to~$\alpha(x)$. In particular, $\alpha(x_i)\to \alpha(x)$ as desired.

Suppose now that $x$ is not the boundary point of a connecting interval but of an interval $I_n$, say $I_n=[x,y]$. Then the sets $S^-\assign\{s_m \mid I_m\text{ precedes }x\}$ and $S^+\assign\{s_m \mid x\text{ precedes }I_m\}$ form a partition $P$ of~$S$, with $s_n = \min S^+$. If $S^-=\es$, then $x=0$ and continuity at $x$ is trivial. Otherwise, $S^-(P)$ has a greatest element or $P$ is critical. In both cases, $x$ would be the boundary point of a connecting interval, a contradiction.

Note that $\alpha$ as defined here does not need to be a loop. But we can turn it into a loop without changing its trace, by appending to it a path in $T$ from $\alpha(1)$ to $\alpha(0)$.

  \eqref{FinfProjLim} This was proved at the end of Section~\ref{wordsec}.
\end{proof}

\begin{corollary}\label{subgroup}
  Let $H\sub H'$ be standard subspaces of~$|G|$. Then $\pi_1(H)$ is a subgroup of $\pi_1(H')$.
\end{corollary}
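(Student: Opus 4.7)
The plan is to use Theorem~\ref{pi1thm}(i) to translate the statement into a claim about the word groups $F_\infty$, so that the desired injectivity reduces to a correspondence between chord sets. I would first fix a base vertex $x_0 \in H$ and apply Lemma~\ref{Hspanning} to extend a topological spanning tree $T$ of $H$ to a topological spanning tree $T'$ of $H'$. Write $\vA$ and $\vA'$ for the sets of natural orientations of the chords of $T$ in $H$ and of $T'$ in $H'$, respectively. The key combinatorial step is to verify that the chords of $T$ in $H$ are precisely those chords of $T'$ in $H'$ that lie inside $H$, giving a natural inclusion $\vA \subseteq \vA'$. Indeed, if $e$ is a chord of $T'$ in $H'$ contained in $H$, then its endpoints lie in $H$ and hence in $T$ (since $T$ contains every vertex of $G$ in $H$), while $e \notin T' \supseteq T$, so $e$ is a chord of $T$ in $H$. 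Conversely, if a chord $e$ of $T$ in $H$ lay in $T'$, then the circle $e \cup uTv$ (with $u,v$ the endpoints of $e$) would lie in $T'$, contradicting the fact that the topological tree $T'$ contains no circle.

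Next, I would observe that this inclusion of letter sets induces a canonical injective homomorphism $\iota \colon F_\infty(\vA) \hookrightarrow F_\infty(\vA')$ that commutes with reduction. This is because reduction is purely combinatorial, depending only on the pairing of inverse letters and on positional adjacency in the domain, so a word with letters in $\vA$ has the same set of reductions whether regarded as a word in $F_\infty(\vA)$ or in $F_\infty(\vA')$. Moreover, any loop $\alpha$ in $H$ can only traverse chords that lie in $H$, so its trace $w'_\alpha$ with respect to $T'$ coincides under $\iota$ with its trace $w_\alpha$ with respect to $T$.

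Finally, I would execute the diagram chase. Suppose $\alpha$ is a loop in $H$ based at $x_0$ whose class is trivial in $\pi_1(H')$. By Theorem~\ref{pi1thm}(i) applied to $(H',T')$, the reduced word $r(w'_\alpha) \in F_\infty(\vA')$ is empty; since $\iota(w_\alpha) = w'_\alpha$ and $\iota$ commutes with reduction, also $r(w_\alpha) = \es$ in $F_\infty(\vA)$. Theorem~\ref{pi1thm}(i) applied to $(H,T)$ then forces $\alpha$ to be null-homotopic in~$H$, so the inclusion-induced homomorphism $\pi_1(H) \to \pi_1(H')$ is injective. The one place where actual work is needed is the chord-correspondence argument of the first paragraph; everything else is a mechanical diagram chase once Theorem~\ref{pi1thm} is in hand.
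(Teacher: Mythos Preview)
Your proof is correct and follows essentially the same approach as the paper's: extend a topological spanning tree $T$ of $H$ to one $T'$ of $H'$ via Lemma~\ref{Hspanning}, observe that the chords of $T$ in $H$ are precisely the chords of $T'$ lying in $H$, and then invoke Theorem~\ref{pi1thm}\,(\ref{pi1Finf}). The paper's own proof is a terse two-liner that states only the forward inclusion (``every chord of $T$ is also a chord of~$T'$'') and leaves the diagram chase implicit; you have simply spelled out both directions of the chord correspondence and the injectivity argument that the paper takes for granted.
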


\begin{proof}
  Let $T$ be a topological spanning tree of~$H$. By Lemma~\ref{Hspanning}, $T$~extends to a \tst~$T'$ of~$H'$. Since every chord of $T$ is also a chord of~$T'$, Theorem~\ref{pi1thm}$\,$\eqref{pi1Finf} implies the assertion.
\end{proof}

Let us call a standard subspace $H$ of~$|G|$ \emph{non-trivial} if it contains an end (of~$G$) that is non-trivial in~$H$. The fundamental groups of all such spaces contain and are contained in the abstract group~$F_\infty$ defined in Section~\ref{wordsec}; recall that this group is independent of~$G$, as long as $|G|$ itself is non-trivial. Indeed:

\begin{corollary}\label{equivalentgroups}
  For every non-trivial stan\-dard subspace $H$ of any~$|G|$ there are subgroup embeddings $F_\infty\to\pi_1(H)\to F_\infty$.
\end{corollary}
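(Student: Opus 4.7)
The embedding $\pi_1(H)\to F_\infty$ is immediate: Theorem~\ref{pi1thm}$\,$\eqref{pi1Finf}, applied with any topological spanning tree~$T$ of~$H$, gives an injection $\pi_1(H)\hookrightarrow F_\infty(\vA)$, and non-triviality of~$H$ combined with Lemma~\ref{trivial} and the local finiteness of~$G$ forces $T$ to have infinitely many chords in~$H$, so $F_\infty(\vA)\cong F_\infty$ as abstract groups.

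For the reverse embedding $F_\infty\to\pi_1(H)$, the plan is to identify a subgroup of $F_\infty(\vA)$ that is abstractly isomorphic to~$F_\infty$ and lies in the image of the above injection. Let $\omega$ be an end that is non-trivial in~$H$. Using the neighbourhood basis $\hat C(S,\omega)\cap H$ of~$\omega$ and Lemma~\ref{trivial}, we would inductively choose finite vertex sets $S_0\sub S_1\sub\dotsb$ separating $\omega$ ever more tightly from the rest of~$H$, together with chords $e_{n_0},e_{n_1},\dotsc$ of~$T$ in~$H$ whose endpoints both lie in $C(S_i,\omega)$; at each step we would absorb the endpoints of the previously chosen chords into the next~$S_i$, which is feasible because $G$ is locally finite. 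By construction, every sequence of points $x_i\in e_{n_i}$ then converges to~$\omega$ in~$|G|$.

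Now let $F'\sub F_\infty(\vA)$ consist of those reduced words that use only letters in $\{\ve_{n_i},\ev_{n_i}\mid i\in\N\}$. This is a subgroup of $F_\infty(\vA)$, and as the group of reduced infinite words on a countable letter set it is isomorphic to~$F_\infty$. By Theorem~\ref{pi1thm}$\,$\eqref{pi1Finf} it remains to verify that every $w\in F'$ has all its monotonic subwords converging in~$|G|$. Given such a subword $w\restr S'$ with $S'=\{s_0,s_1,\dotsc\}$, write $w(s_j)\in\{\ve_{n_{i_j}},\ev_{n_{i_j}}\}$; because each letter appears only finitely often in~$w$, we have $i_j\to\infty$, and the choice of the $e_{n_i}$ then ensures that any sequence $x_j\in e_{n_{i_j}}$ converges to~$\omega$. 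Thus $F'$ lies in the image of $\pi_1(H)\to F_\infty(\vA)$, and pulling $F'$ back to $\pi_1(H)$ yields the desired embedding $F_\infty\hookrightarrow\pi_1(H)$.

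The only genuinely delicate step will be the chord selection: Lemma~\ref{trivial} only guarantees that every neighbourhood of~$\omega$ meets some chord in its interior, whereas what is needed is chords whose entire support (endpoints included) sits in shrinking neighbourhoods of~$\omega$. Engineering this is where the inductive absorption of endpoints into the separators~$S_i$ does its work; once that is in place, the rest is straightforward bookkeeping with Theorem~\ref{pi1thm}$\,$\eqref{pi1Finf}.
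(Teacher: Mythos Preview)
Your argument is correct and follows essentially the same route as the paper: pick a sequence of chords of~$T$ converging to a non-trivial end~$\omega$, and observe that the reduced words using only those chord letters all satisfy the convergence criterion of Theorem~\ref{pi1thm}\,\eqref{pi1Finf}, yielding a copy of~$F_\infty$ inside~$\pi_1(H)$. The paper packages this slightly differently---it forms the subspace $H':=T\cup\{e_{n_0},e_{n_1},\dotsc\}$, invokes Corollary~\ref{subgroup} to get $\pi_1(H')\le\pi_1(H)$, and then notes that the embedding $\pi_1(H')\to F_\infty$ from Theorem~\ref{pi1thm}\,\eqref{pi1Finf} is surjective---but this is the same idea. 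Your worry about Lemma~\ref{trivial} is unnecessary: a chord ``contained in'' $\hat C(S,\omega)\cap H$ already has both endpoints in~$C(S,\omega)$, since the only vertices in $\hat C(S,\omega)$ are those of~$C(S,\omega)$; so the inductive absorption of endpoints, while harmless, is not needed.
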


\begin{proof}
  Theorem~\ref{pi1thm}$\,$\eqref{pi1Finf} says that $\pi_1(H)$ is a subgroup of~$F_\infty$. Conversely, let $T$ be a \tst\ of~$H$. Since $H$ is non-trivial, $T$~has a sequence of chords in~$H$ that converge to an end $\omega\in H$ of~$G$ (Lemma~\ref{trivial}). The union~$H'$ of~$T$ with all these chords is a standard subspace of $|G|$ contained in~$H$, so $\pi_1(H')\le\pi_1(H)$ by Corollary~\ref{subgroup}. Since $T$ is a \tst\ of $H'$ all whose (infinite subwords of words of) chords converge in~$|G|$, Theorem~\ref{pi1thm}$\,$\eqref{pi1Finf} implies that $\pi_1(H')$ is isomorphic to~$F_\infty$.
\end{proof}

\begin{corollary}\label{freetrivial}
The fundamental group $\pi_1(H)$ of a standard subspace $H$ of~$|G|$ is free if and only if $H$ is trivial. In particular, $\pi_1(|G|)$~is free if and only if every end has a contractible neighbourhood in~$|G|$.
\end{corollary}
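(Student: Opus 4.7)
The plan is to prove both directions separately, building on Corollary~\ref{equivalentgroups} and the observation at the start of Section~\ref{sec:pi1toFinf}.

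For the easy direction (if $H$ is trivial then $\pi_1(H)$ is free), I would show that the fixed topological spanning tree $T$ has only finitely many chords in~$H$, and then invoke the remark at the beginning of Section~\ref{sec:pi1toFinf} to conclude that $H$ is homotopy equivalent to a finite graph and hence has free fundamental group. To bound the chords, I would use Lemma~\ref{trivial}: every end $\omega\in H$ has a neighbourhood in $H$ containing no chord of~$T$. If $H$ contained infinitely many chords of~$T$, picking one interior point from each and applying compactness of~$H$ would yield a convergent sequence whose limit cannot be a vertex (by local finiteness of~$G$) nor an interior point of an edge (as such a point has a neighbourhood inside its own edge), and hence must be an end of~$G$---contradicting Lemma~\ref{trivial}.

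For the harder direction I would argue the contrapositive: if $H$ is non-trivial then $\pi_1(H)$ is not free. By Corollary~\ref{equivalentgroups}, $F_\infty$ embeds as a subgroup of~$\pi_1(H)$; by the Nielsen--Schreier theorem, every subgroup of a free group is free, so it suffices to show that $F_\infty$ itself is not free. For this I would reuse the construction in the proof of Corollary~\ref{equivalentgroups}: the subspace $H'$ consisting of a topological spanning tree together with a single sequence of chords converging to one end of~$G$ satisfies $\pi_1(H')\cong F_\infty$. But $H'$ is homotopy equivalent to the Hawaiian Earring (as remarked in the introduction for the one-end case, via contracting the contractible tree $T$), so $F_\infty$ is isomorphic to the fundamental group of the Hawaiian Earring. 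This group is classically known not to be free, by Higman~\cite{HigmanFreeProd} and Cannon--Conner~\cite{CannonConnerER}.

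The ``in particular'' clause is then immediate by specializing to $H=|G|$, since triviality of~$|G|$ in our sense means exactly that every end of~$G$ has a contractible neighbourhood in~$|G|$. The principal obstacle is the non-freeness of $\pi_1$ of the Hawaiian Earring, but since this is an established classical result the real work amounts only to the transport step---identifying one and the same standard subspace whose fundamental group is both $F_\infty$ (via Corollary~\ref{equivalentgroups}) and that of the Hawaiian Earring (via the homotopy equivalence).
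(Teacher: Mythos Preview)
Your proof is correct and follows the same architecture as the paper's: for triviality $\Rightarrow$ free you bound the chords (via Lemma~\ref{trivial} and a compactness/convergence argument) and reduce to a finite graph; for non-triviality $\Rightarrow$ non-free you embed $F_\infty$ via Corollary~\ref{equivalentgroups}, apply Nielsen--Schreier, and invoke Higman.

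The one place you take a slightly different route is in arguing that $F_\infty$ is not free. The paper simply cites Higman (via Cannon--Conner) for this fact directly, as a statement about the abstract group~$F_\infty$. You instead pass through a specific subspace $H'$, assert that $H'$ is homotopy equivalent to the Hawaiian Earring, and then cite the non-freeness of the Earring's fundamental group. This works, but the step ``contract the contractible tree $T$ to get the Hawaiian Earring'' is not quite free: collapsing a contractible subspace preserves homotopy type only when the pair has the homotopy extension property, which you have not checked for $(H',T)$. The paper's direct citation sidesteps this, and since Higman's result is already about the combinatorially defined group (what you call $F_\infty$), the detour through a geometric model is unnecessary.
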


\begin{proof}
  Let $T$ be a \tst\ of~$H$. If $H$ is trivial, then $T$~has only finitely many chords in~$H$: otherwise some of them would converge to a end, which would be non-trivial in $H$ by Lemma~\ref{trivial}. By Lemma~\ref{TST}, $H$~is homotopy equivalent to a finite graph whose fundamental group is the free group on this set of chords.

Conversely, let us assume that $H$ is non-trivial and show that $\pi_1(H)$ is not free. By Corollary~\ref{equivalentgroups}, $\pi_1(H)$~contains $F_\infty$ as a subgroup, so by the Nielsen-Schreier theorem~\cite{Stillwell80} it suffices to prove that $F_\infty$ is not free. As pointed out in~\cite{CannonConnerER}, this was shown by Higman~\cite{HigmanFreeProd}.
\end{proof}

When $H$ is non-trivial, then $\pi_1(H)$ is uncountable, so any representation needs uncountably many generators. We believe that one also needs uncountably many relations, but have no proof of this.

Theorem~\ref{pi1thm} provides a reasonably complete solution to our original graph-theoretical problem, which asked for a canonical combinatorial description of the fundamental group of~$|G|$ for given~$G$. It does not answer the group-theoretical question of how interesting or varied are the groups occurring as~$\pi_1(|G|)$ or~$\pi_1(H)$.

We close with some evidence that this question may indeed be interesting. For all we know so far, $F_\infty$~might be the only abstract group ever occurring as $\pi_1(H)$ for non-trivial~$H$. However, this is far from the truth:

\begin{theorem}\label{oneend}
  The fundamental group $\pi_1(H)$ of a standard subspace $H$ of $|G|$ is isomorphic to $F_\infty$ if and only if $H$ contains precisely one end of $G$ that is non-trivial in $H$.
\end{theorem}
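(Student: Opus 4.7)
Let $\omega$ be the unique non-trivial end of $G$ in~$H$, and let $T$ be a \tst\ of~$H$. By Lemma~\ref{trivial}, every other end of $G$ in~$H$ has a neighbourhood that contains no chord of~$T$, and local finiteness of~$G$ rules out any vertex or edge-interior as an accumulation point of infinitely many chords. Hence in the compact space~$|G|$ the only possible accumulation point of an infinite set of distinct chords is~$\omega$, so any such set in fact converges to~$\omega$. Since each letter occurs only finitely often in any word, every infinite monotonic subword of any $w\in F_\infty(\vA)$ involves infinitely many distinct chords, and the associated sequence of points therefore converges to~$\omega$. Thus every reduced word has all monotonic subwords converging, and Theorem~\ref{pi1thm}$\,$\eqref{pi1Finf} yields $\pi_1(H)\cong F_\infty$.

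\textbf{Forward direction.}
I prove the contrapositive. If $H$ has no non-trivial end then $\pi_1(H)$ is free by Corollary~\ref{freetrivial}, whereas $F_\infty$ is not free (the Higman result cited in that corollary's proof), and so $\pi_1(H)\not\cong F_\infty$. Otherwise $H$ contains two distinct ends $\omega_1\ne\omega_2$ that are non-trivial in~$H$, and the plan is to exhibit $\pi_1(H)$ as a \emph{nontrivial free product}; Higman's theorem that $F_\infty$ admits no such decomposition will then finish the argument. Pick disjoint neighbourhoods $U_i\ni\omega_i$ in~$H$, and let $A_i$ be the set of chords of~$T$ that lie in~$U_i$ ($i=1,2$). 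The remaining chord set $A_0$ is finite: its chords cannot accumulate at~$\omega_1$ or~$\omega_2$ (they avoid $U_1\cup U_2$), nor at a trivial end (Lemma~\ref{trivial}), nor at a vertex or edge-interior (local finiteness). The subspaces $H_j\assign T\cup\bigcup A_j$ are standard subspaces of~$H$; by the backward direction applied to~$H_i$ we have $\pi_1(H_i)\cong F_\infty$ for $i=1,2$ (each $H_i$ has $\omega_i$ as its unique non-trivial end), while $\pi_1(H_0)$ is a free group of finite rank.

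Identifying $\pi_1(H)$ with its image in $F_\infty(\vA)$ via Theorem~\ref{pi1thm}$\,$\eqref{pi1Finf}, I plan to show $\pi_1(H)=\pi_1(H_0)*\pi_1(H_1)*\pi_1(H_2)$. Any $w\colon S\to A$ in~$\pi_1(H)$ has only finitely many $A_0$-positions (as $A_0$ is finite and each letter has finite preimage), and its $A_1$- and $A_2$-positions cannot both occur cofinally in any monotonic subword, since such a subword would alternate between points in~$U_1$ and~$U_2$ and fail to converge. A short order-theoretic argument---using that every infinite linearly ordered set contains a copy of $\omega$ or~$\omega^*$ and that adjacent maximal monochromatic intervals must have opposite colours---then partitions the $A_1\cup A_2$-positions of~$S$ into finitely many monochromatic intervals. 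Together with the finitely many $A_0$-positions this gives a finite concatenation $w=w_1 w_2\dotsm w_k$ with each $w_j$ a reduced word in a single $A_{i_j}$. Since inverse letters always lie in the same $A_j$, no cancellation crosses a boundary; and the subgroups $\pi_1(H_j)$ have pairwise trivial intersection in~$F_\infty(\vA)$ because their letter sets are disjoint. This concatenation is therefore the normal form of the free product $\pi_1(H_0)*\pi_1(H_1)*\pi_1(H_2)$, which is nontrivial as $\pi_1(H_1)$ and $\pi_1(H_2)$ are both nontrivial; Higman's theorem then contradicts $\pi_1(H)\cong F_\infty$.

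\textbf{Main obstacle.} The delicate step is the combinatorial partition of the positions of $w$ into finitely many monochromatic intervals; once this is in hand, the free-product identification and the appeal to Higman's theorem are routine. The order-theoretic lemma at its heart is elementary but needs a careful proof, particularly when the set of maximal monochromatic intervals is densely ordered rather than of type $\omega$ or $\omega^*$.
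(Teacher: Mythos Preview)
Your backward direction matches the paper's argument exactly. Your forward direction is correct in outline but takes a genuinely different route, and there are two small points to tighten.

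\textbf{Comparison of approaches.} For the case of two non-trivial ends, the paper does not decompose $\pi_1(H)$ itself as a free product. Instead it contracts a finite separating set $S$ to a single vertex~$v_S$, obtaining a new graph~$G'$ and a standard subspace $H'\sub |G'|$ in which $v_S$ topologically separates $\omega_1$ from~$\omega_2$. Seifert--van~Kampen then gives $\pi_1(H')\cong\pi_1(H_1)*\pi_1(H_2)$ with both factors non-trivial, and the map $w_a\mapsto r(w_a\restr\{n,n+1,\dotsc\})$ (forgetting the finitely many chords lost in the contraction) yields an epimorphism $\pi_1(H)\to\pi_1(H')$. Lemma~\ref{quasiatomic} (Conner--Eda) then rules out $\pi_1(H)\cong F_\infty$. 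Your approach stays inside $F_\infty(\vA)$ throughout and decomposes $\pi_1(H)$ directly: partition the chords by proximity to $\omega_1$,~$\omega_2$, show via the convergence criterion that every reduced word in the image factors as a finite alternating concatenation over the three alphabets, and read off an internal free-product decomposition. This is more elementary---no quotient spaces, no van~Kampen---at the price of the order-theoretic lemma you flag. The paper's route, by contrast, offloads the decomposition to a standard topological tool.

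\textbf{Two points to fix.} First, your claim that each $H_i$ has $\omega_i$ as its \emph{unique} non-trivial end is not justified: other ends of $G$ lying in~$U_i$ could also be accumulation points of $A_i$-chords. What you actually need, and what your argument gives, is only that each $H_i$ is non-trivial (since $\omega_i$ is non-trivial in~$H_i$); then $\pi_1(H_i)$ contains $F_\infty$ by Corollary~\ref{equivalentgroups} and is not finitely generated. Second, the indecomposability result you invoke is not Higman's; it is the Conner--Eda theorem recorded here as Lemma~\ref{quasiatomic}. Your decomposition $\pi_1(H)\cong\pi_1(H_0)*\pi_1(H_1)*\pi_1(H_2)$ can be regrouped as $A*B$ with both $A$ and~$B$ not finitely generated (each contains a copy of~$F_\infty$), so Lemma~\ref{quasiatomic} applies directly and gives the contradiction.
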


For the proof of Theorem~\ref{oneend} we need a lemma
about~$F_\infty$. By Theorem~4.1 of Conner and Eda \cite{ConnerEda},
the image of any homomorphism $F_\infty\to A*B$ is contained in either
$A'*B$ or $A*B'$ for finitely generated subgroups $A'$ of~$A$ and $B'$
of~$B$. Hence,

\begin{lemma}\label{quasiatomic}
  If groups $A,B$ are not finitely generated then there is no
  epimorphism $F_\infty \to A*B$.\noproof
\end{lemma}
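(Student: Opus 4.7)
The plan is to apply the cited Conner--Eda theorem directly and derive a contradiction from surjectivity. The proof should essentially be a one-liner once one works out what ``image contained in $A'*B$'' means when combined with surjectivity.

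Suppose, for contradiction, that $\phi\colon F_\infty\to A*B$ is an epimorphism. The quoted Theorem~4.1 of Conner--Eda supplies finitely generated subgroups $A'\le A$ and $B'\le B$ such that $\phi(F_\infty)\subseteq A'*B$ or $\phi(F_\infty)\subseteq A*B'$. By the symmetry between $A$ and~$B$ it is enough to handle the first case. Surjectivity of $\phi$ then forces $A*B=A'*B$, so in particular $A\subseteq A'*B$ as a subset of $A*B$.

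The next step is to deduce $A=A'$. For this I take an arbitrary $a\in A$ and compare its normal forms: in $A*B$ the element~$a$ is a single syllable from~$A$, while the inclusion $A'*B\hookrightarrow A*B$ induced by $A'\le A$ (injective by the universal property of free products) shows that every element of $A'*B$ has a normal form whose $A$-syllables already lie in~$A'$. Since normal forms in $A*B$ are unique, $a$ must itself be a syllable from~$A'$, i.e.\ $a\in A'$. Hence $A=A'$ is finitely generated, contradicting the hypothesis on~$A$.

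There is no real obstacle once the Conner--Eda result is granted; the only point worth a line is the normal-form comparison establishing $A=A'$, which reduces to the injectivity of $A'*B\hookrightarrow A*B$ and the uniqueness of free-product normal forms. The lemma then follows without any further machinery, and the ``\noproof'' marker in the statement is consistent with this being essentially a direct corollary of the quoted theorem.
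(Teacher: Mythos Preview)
Your argument is correct and follows exactly the route the paper takes: the paper simply states the lemma as an immediate consequence of the Conner--Eda theorem (marking it with \verb|\noproof|), and you have spelled out the one missing step---that $A*B=A'*B$ forces $A=A'$ via uniqueness of normal forms---which is indeed routine.
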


\begin{proof}[Proof of Theorem~\ref{oneend}]
  Suppose first that $H$ contains precisely one end of $G$ that is
  non-trivial in~$H$. Then every sequence of chords in~$H$ in which
  each chord appears only finitely often converges to this end. The group embedding $\pi_1(H)\to F_\infty$ from
  Theorem~\ref{pi1thm}$\,$\eqref{pi1Finf} is therefore surjective.

  Now suppose that $\pi_1(H)$ is isomorphic to $F_\infty$. Then $H$ is
  non-trivial (see the proof of Corollary~\ref{freetrivial}), so $H$ contains an end of $G$ that is non-trivial in~$H$. It remains
  to show that there cannot be two such ends, $\omega_1$
  and~$\omega_2$ say. Let $T$ be a topological spanning tree of~$H$,
  and pick a finite set $S$ of vertices separating $\omega_1$
  from~$\omega_2$. Consider the graph $G'=G/S$ obtained from $G$ by
  deleting any edges between vertices in~$S$ and identifying all the
  vertices in~$S$ to a new vertex~$v_S$. Since rays in $G$ have a tail
  in $G'$ and vice versa, there is an obvious bicontinuous bijection between the ends of $G$
  and those of~$G'$, and we shall not distinguish them notationally.
  Let $H/S$ and $T/S$ be the quotient spaces of $H$ and~$T$ defined
  analogously.

  Our next aim is to construct a standard subspace $H'$ of $|G'|$ and
  a topological spanning tree $T'$ of $H'$ such that the chords of
  $T'$ are precisely the chords of $T$ other than those between
  vertices in~$S$. Clearly $T/S$ is a path-connected subspace of $H/S$
  that contains all its vertices and ends, but $T/S$ can contain
  circles. However, all these contain~$v_S$, so by deleting some edges
  at $v_S$ we can make $T/S$ into a topological spanning tree $T'$
  of~$H/S$. As we do not want $T'$ to have chords in $H/S$ that are
  not chords of~$T$, we remove the same edges at $v_S$ also
  from~$H/S$, to obtain a standard subspace $H'$ of $|G'|$ of which $T'$ is still
  a topological spanning tree.

  It is easy to see that the chords of $T'$ are precisely the chords
  of $T$ that do not join two vertices in~$S$. We enumerate the chords
  of $T$ as $e_0,e_1,\dotsc$ so that the chords of $T'$ are precisely
  $e_n,e_{n+1},\dotsc$ for some $n\in\N$.

  The vertex $v_S$ separates the ends $\omega_1$ and $\omega_2$
  in~$G'$, so $|G'|\sm\{v_S\}$ is the disjoint union of open sets
  $O_1,O_2$ with $\omega_1\in O_1$ and $\omega_2\in O_2$. For $i=1,2$
  let $H_i\assign\{v_S\}\cup(H'\cap O_i)$. Both $H_i$ are non-trivial,
  and hence their fundamental groups are not finitely generated. As $H_1\cap H_2=\{v_S\}$, the Seifert~- van~Kampen theorem yields
  that $\pi_1(H')\simeq\pi_1(H_1)*\pi_1(H_2)$.

  We now define an epimorphism $f\colon \pi_1(H)\to\pi_1(H')$; as
  $\pi_1(H)\simeq F_\infty$ by assumption, this will induce an
  epimorphism $F_\infty\to\pi_1(H_1)*\pi_1(H_2)$ contradicting
  Lemma~\ref{quasiatomic}. By Theorem~\ref{pi1thm}$\,$\eqref{pi1Finf},
  every element $a$ of $\pi_1(H)$ corresponds to a reduced word $w_a$
  in $F_\infty(\{\ve[0],\ve[1],\dotsc\})$ all of whose monotonic
  subwords converge in $|G|$. Consider the word $r(w_a\restr\{n,n+1,
  \dotsc\})$. This word corresponds to an element of~$\pi_1(H')$: its
  monotonic subwords are subwords of~$w_a$, so they converge in~$|G|$
  and hence also in~$|G'|$. Hence by \eqref{inducedreductioninformal} and Theorem~\ref{pi1thm}$\,$\ref{pi1Finf} the map $w_a\mapsto
  r(w_a\restr\{n,n+1,\dotsc\})$ induces a homomorphism $f:\pi_1(H)\to
  \pi_1(H')$. For the same reason $f$ is surjective: by
  Theorem~\ref{pi1thm}$\,$\ref{pi1Finf}, every element of $\pi_1(H')$
  corresponds to a reduced word $w$ in$\{\ve[n],\ve[n+1],\dotsc\}\cup
  \{\ev[n],\ev[n+1],\dotsc\}$ all of whose monotonic subwords converge
  in~$|G'|$, and hence also in~$|G|$. Therefore $w=w_a$ for some
  $a\in\pi_1(H)$, by Theorem~\ref{pi1thm}$\,$\eqref{pi1Finf}.
\end{proof}

\section{Proof of Lemma~\ref{lemma:reducible}}\label{sec:injectivity}

We conclude our proof of Theorem~\ref{pi1thm} with the proof of Lemma~\ref{lemma:reducible}. In this proof we shall need another lemma:

\begin{lemma}\label{reducedpath}
  Let $\sigma$ be a path in $H$ and let $w_{\sigma}:S\to A$ be its trace. Then for every $S'\sub S$ there is a path $\tau$ in $H$ with the same endpoints as $\sigma$ and such that $w_{\tau}=w_{\sigma}\restr S'$. Moreover, $\tau$ can be chosen so that $\tau\restr[a,b]=\sigma\restr[a,b]$ for every domain $[a,b]\in S'$ of a pass of $\tau$.
\end{lemma}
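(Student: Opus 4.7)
The plan is to obtain $\tau$ by re-routing, through the tree~$T$, exactly those passes of $\sigma$ that we wish to drop. Specifically, I would set $\tau(t)\assign\sigma(t)$ whenever $t$ lies in no interval $(a,b)$ with $[a,b]\in S\sm S'$, and on every such $[a,b]$ let $\tau\restr[a,b]\colon[a,b]\to\sigma(a)T\sigma(b)$ be an order-preserving homeomorphism onto the unique arc in~$T$ between the two endvertices of the corresponding chord. Since both endpoints of every chord are vertices and hence lie in~$T$, this arc exists; and the two clauses of the definition agree at every shared boundary point of consecutive pass intervals, since both give the common vertex $\sigma(t)\in T$ there. Thus $\tau$ is a well-defined map into $\sigma([0,1])\cup T\sub H$, with $\tau(0)=\sigma(0)$, $\tau(1)=\sigma(1)$, and $\tau\restr[a,b]=\sigma\restr[a,b]$ for every $[a,b]\in S'$, giving the ``moreover'' statement for free.

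The hard part will be verifying continuity of~$\tau$. Away from accumulation points of the family $\{[a,b]:[a,b]\in S\sm S'\}$, $\tau$ is locally a finite concatenation of restrictions of~$\sigma$ and of arc parametrizations, which fit together at their common boundary points by construction, so continuity there is routine. The main obstacle is continuity at a point $t_0$ that is an accumulation point (from at least one side) of intervals $(a_n,b_n)$ with $[a_n,b_n]\in S\sm S'$. The pairwise disjointness of pass interiors then forces $b_n-a_n\to 0$, so by continuity of $\sigma$ both $\sigma(a_n)$ and $\sigma(b_n)$ tend to $\sigma(t_0)=\tau(t_0)$. For any sequence $t_k\to t_0$ whose members lie in $S'$-pass intervals or in the complement of $\bigcup_{[a,b]\in S\sm S'}[a,b]$, we have $\tau(t_k)=\sigma(t_k)\to\sigma(t_0)$ by continuity of~$\sigma$. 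For $t_k\in(a_{n_k},b_{n_k})$ with $[a_{n_k},b_{n_k}]\in S\sm S'$, $\tau(t_k)$ lies on the arc $\sigma(a_{n_k})T\sigma(b_{n_k})$, and applying Lemma~\ref{shortarcs} to the interleaved sequence $\sigma(a_{n_1}),\sigma(b_{n_1}),\sigma(a_{n_2}),\sigma(b_{n_2}),\dots$ shows that these arcs eventually lie in any prescribed neighbourhood of~$\sigma(t_0)$, finishing the continuity check.

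Finally I would verify $w_\tau=w_\sigma\restr S'$. Every $[a,b]\in S'$ remains a pass of~$\tau$ through the same oriented chord in the same direction, since $\tau\restr[a,b]=\sigma\restr[a,b]$. No new passes arise: on each replaced $[a,b]\in S\sm S'$ the image of~$\tau$ lies in~$T$, which contains no chord, while on the complement $\tau$ agrees with $\sigma$ and hence has no complete passes beyond those already enumerated by~$S$. The only genuinely delicate step is thus the continuity argument at accumulation points of passes, which is exactly where Lemma~\ref{shortarcs} is essential.
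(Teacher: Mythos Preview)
Your proof is correct, and close in spirit to the paper's, but the construction differs in one respect worth noting. You replace only the individual pass intervals $[a,b]\in S\sm S'$ by arcs in~$T$, keeping $\tau=\sigma$ everywhere else---in particular on the portions of~$[0,1]$ where $\sigma$ wanders in~$H$ without completing a pass. The paper instead takes each \emph{maximal} interval $[a,b]$ disjoint from all the $(s,t)$ with $[s,t]\in S'$ and replaces $\sigma\restr[a,b]$ wholesale by a path along $\sigma(a)T\sigma(b)$; thus the paper's $\tau$ is automatically ``straight'' (its image outside the $S'$-passes lies in~$T$), whereas yours need not be. Your more surgical construction is just as valid for the lemma as stated, and the continuity argument at accumulation points of dropped passes via Lemma~\ref{shortarcs} is the same in both. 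The paper's version has the minor advantage that the endpoints of its replacement intervals can be ends as well as vertices (it handles the degenerate case $a=b$ explicitly), and it yields a tidier $\tau$ for downstream use; yours has the advantage of changing $\sigma$ as little as possible.
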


\begin{proof}
  Note that the first statement follows from the more general fact that we proved in Theorem~\ref{pi1thm}(\ref{pi1Finf}): As $w_{\sigma}$ is the trace of a path, all its monotonic subwords converge in $H$. Hence also all monotonic subwords of $w_{\sigma}\restr S'$ converge in $H$, and thus it is the trace of a path~$\tau$. To ensure that $\tau$ coincides with $\sigma$ on every domain of a pass of $\tau$, we will construct $\tau$ explicitely.

  For every $x\in[0,1]$ that lies in an interval in $S'$ we define $\tau(x) \assign \sigma(x)$. Further, we put $\tau(0)\assign\sigma(0)$ and $\tau(1)\assign\sigma(1)$. Then for every $x\in[0,1]$ with $\tau(x)$ still undefined there is a unique maximal interval $[a,b]$ that contains $x$ and is disjoint from~$(s,t)$ for every $[s,t]\in S'$. It is easy to see that $\sigma(a)$ and $\sigma(b)$ are vertices or ends, and hence lie in~$T$. If $a\ne b$, we call $[a,b]$ a \emph{non-traversing interval} and define $\tau$ on $[a,b]$ as a path from $\sigma(a)$ to $\sigma(b)$ whose image is precisely the arc in $T$ between these two points. If $a=b$, then $\sigma(a)$ is an end, and we let $\tau(a) := \sigma(a)$. Clearly $w_{\tau}=w_{\sigma} \restr S'$ and $\tau\restr [a,b]=\sigma \restr [a,b]$ for each $[a,b]\in S'$; it remains to show that $\tau$ continuous.

Continuity is clear at inner points of intervals in $S'$ or of non-traversing intervals, so let $x$ be any other point. By definition, $\tau(x)=\sigma(x)$. It is easy to see that $\sigma(x)$ is a vertex or an end, so $\tau(x)\in T$. Continuity of $\tau$ at $x$ is now easy to prove, using Lemma~\ref{shortarcs} and the fact that $\sigma$ is continuous.
\end{proof}

\begin{proof}[Proof of Lemma~\ref{lemma:reducible}]
The proof that $\alpha\sim\beta$ implies $r(w_{\alpha}) = r(w_{\beta})$ was shown already for the case that $\alpha$ and~$\beta$ are loops. The general case follows, since $\sigma$ and $\tau$ can be made into loops by appending a path in $T$ joining their endpoints, which does not change their traces. It remains to prove the converse: we assume that $r(w_{\sigma}) = r(w_{\tau})$, and show that $\sigma\sim\tau$.

Our aim is to construct a homotopy $F = (f_t)_{t\in[0,1]}$ of paths $f_t$ in~$H$ with $f_0 = \sigma$ and $f_1 = \tau$. We first assume that $\tau$ does not traverse any chords; the general statement will then follow from this case. Our proof of this case will consist of the following five parts. We begin with some simplifications of the problem, straightening $\sigma$ and~$\tau$ to homotopic but less complicated paths. We then pair up the passes of $\sigma$ through chords, with a view to cancel such pairs $(\ve,\ev)$ by a local homotopy $f_t\to f_{t'}$ that retracts a small segment of $f_t$ running through~$\ve$ and back through~$\ev$ without traversing any other chords. These pairs of passes have to be nested in the right way, and finding this pairing will be the second part of our proof. In the third part we determine a (time) order in which to cancel those nested pairs. This order will have to list inner pairs before outer pairs, but among all the linear orders doing this we have to find a suitable one: since the partial order of the nestings of pairs can have limits, so will the linear order of times $t$ at which to cancel the pairs. At these limits we may encounter discontinuities in our homotopy. But we shall be able to choose the order of cancellations so that this happens only countably often. In the fourth step we smooth out those discontinuities by inserting further local homotopies $f_t\to f_{t'}$ at those countably many times~$t$. In part five of the proof we finally show that the homotopy $F$ thus defined is indeed continuous.

We now embark on part one of the proof, simplifying $\sigma$ and~$\tau$. Although $\tau$ does not, by assumption, traverse chords, its image might still contain inner points of chords. Let us call a path $\alpha$ in $H$ \emph{straight} if it does not have this property, i.e.\ if $\alpha(x)\in T$ for every $x$ not contained in the domain of a pass. Applying Lemma~\ref{infhomotopies} to local homotopies retracting any segments of $\alpha$ visiting a chord $e=uv$ to a constant map with image $u$ or~$v$, we can make any path in $H$ straight without touching its passes:
\begin{txteq}\label{straight}
Every path $\alpha$ in $H$ is homotopic in~$\im\alpha$ to a straight path~$\alpha'$ that has the same passes as~$\alpha$.
\end{txteq}
By `having the same passes' we mean not only that $\alpha,\alpha'$ have the same trace but that for every interval $[a,b]\sub [0,1]$ either both segments $\alpha\restr[a,b]$ and $\alpha'\restr[a,b]$ are a pass of their respective path or neither is, and if both are then $\alpha\restr[a,b] = \alpha'\restr[a,b]$.

By~\eqref{straight}, $\sigma$ and $\tau$ are homotopic to straight paths $\sigma'$ and $\tau'$ such that $\sigma'$ has the same passes as $\sigma$ while $\tau'$ has the same passes as $\tau$ (namely, none). In particular, $w_{\sigma'}=w_{\sigma}$ and $w_{\tau'}=w_{\tau} =\es$, so $r(w_{\sigma'}) = r(w_{\sigma}) = r(w_{\tau}) = r(w_{\tau'})$ by assumption. We may therefore assume that $\sigma$ and~$\tau$ are themselves straight.

Let us start with the simplest case: assume that $\sigma$ also traverses no chord. Then $\sigma$ and $\tau$ are homotopic. Indeed:
\begin{txteq}\label{pathsinT}
  Let $\alpha$ and $\beta$ be paths in $T$ with identical endpoints $x,y\in V(G)\cup\Omega(G)$. Then there is a homotopy between $\alpha$ and $\beta$ in $\im\alpha\cup\im\beta$. If $\beta$ is constant, this homotopy can be chosen time-injective.
\end{txteq}

\noindent
To prove~\eqref{pathsinT}, we construct homotopies of $\alpha$ and~$\beta$ to an $x$--$y$ path in~$xTy$. We shall assemble these two homotopies from homotopies between the segments $\gamma\colon [a,b]\to T$ of $\alpha$ or~$\beta$ that are maximal with $\gamma((a,b))\sub T\setminus xTy$, and constant maps, defined as follows. Since $T\sm xTy$ is open in~$T$, the maximality of $[a,b]$ implies that $\gamma(a), \gamma(b)\in xTy$. Let us show that $\gamma(a) = \gamma(b)$. If not, then these two points are joined by two arcs that have disjoint interiors: one in~$xTy$, and another in~$\im\gamma$ (Lemma~\ref{arc}). These two arcs would form a circle in~$T$, which does not exist since $T$ is a topological spanning tree of~$H$. By Lemma~\ref{TST}, there is a time-injective homotopy in~$\im\gamma$ from $\gamma$ to the constant map $[a,b]\to \{\gamma(a)\}$ ($=\{\gamma(b)\}$). Combining all these homotopies, one for every~$\gamma$, we obtain time-injective homotopies (in $\im\alpha$ and $\im\beta$) from $\alpha$ and $\beta$ to $x$--$y$ paths in~$xTy$. If $\beta$ is constant (with image $x=y$), the first of these is the desired time-injective homotopy from $\alpha$ to~$\beta$. Otherwise we note that since $xTy \simeq [0,1]$, the latter two paths are homotopic in~$xTy$, and we can combine our three homotopies to the desired homotopy between $\alpha$ and~$\beta$.

Using Lemma~\ref{infhomotopies} to apply~\eqref{pathsinT} to segments between passes, we obtain the following generalization:
\begin{txteq}\label{samepasses}
  If paths $\alpha$ and $\beta$ in $H$ with identical endpoints have precisely the same passes (i.e., they agree on each domain of a pass of $\alpha$ or $\beta$), then there is a homotopy between them in $\im\alpha\cup\im\beta$.
\end{txteq}

Let us assume now that $\sigma$ traverses chords. Then $[0,1]$ is the disjoint union of the following intervals: the interiors of domains of passes, and the components of the rest of $[0,1]$. Every such component is a closed interval. By~\eqref{samepasses}, we may assume that 
\begin{txteq}\label{imageofsigma}
  If $(a,b)\subset[0,1]$ is maximal with the property that it avoids all domains of passes of $\sigma$, then $\sigma$ maps $[a,b]$ onto the arc~$\sigma(a)T\sigma(b)$. In particular, if $\sigma(a)=\sigma(b)$ then $\sigma$ is constant on~$[a,b]$.
\end{txteq}
   This completes part one of the proof.

\medbreak
   
In order to construct our desired homotopy $F$ between $\sigma$ and~$\tau$, we have to use the fact that $w_{\sigma}$ reduces to $w_{\tau} = \es$. So let us consider a reduction $R$ of $w_{\sigma}$ to the empty word. By definition, $R$~is a totally ordered set of disjoint pairs of positions of $w_{\sigma}$ such that the elements of each $p\in R$ are adjacent in $S\sm\bigcup\{q\in R \mid q<p\}$ and are mapped by $w_{\sigma}$ to inverse letters $\ve_i,\ev_i$. The positions of~$w_\sigma$, in this case, are the domains of the passes of~$\sigma$. Let us write
 $$\P := \big\{(\pi,\rho)\mid \exists (s,t)\in R\,\colon (\pi = \sigma\restr s
     \text{ and } \rho = \sigma\restr t)\big\}$$
 for the set of pairs of passes corresponding to~$R$, where the order of $(s,t)$ is that from~$S$ (so the interval $s$ precedes the interval $t$ in~$[0,1]$). Note that $\P$ is countable, because $\sigma$ has only countably many passes. Our homotopy $F$ will remove the passes of $\sigma$ in pairs as specified by~$\P$, one at a time. The order in which this is done will not necessarily be the ordering which $R$ induces on~$\P$, so let us for now think of $\P$ as an unordered set.

Let us fix some more notation for later use.
Let $\alpha$ be a path in~$H$, and let $p = (\pi,\rho)$ be a pair of passes of $\alpha$ through the same chord $e =: e(p)$ but in opposite directions. Let $[a,a^-]$ be the domain of~$\pi$ and $[b^-,b]$ that of~$\rho$, and assume that $a < a^-\le b^- < b$. Then $\pi(a) = \rho(b) =: z$ and $\pi(a^-) = \rho(b^-)  =: z^-$ are the two vertices of~$e$. If $\alpha\restr[a^-,b^-]$ traverses no chord, and if $\beta$ is the path obtained from $\alpha$ by replacing its segment $\alpha\restr[a,b]$ with the constant map $[a,b]\to\{z\}$, we say that $\beta$ is obtained from $\alpha$ by \emph{cancelling the pair $p$ of passes}. Since $a,a^-,b,b^-,z,z^-$ depend only on the pair $p=(\pi,\rho)$ but not on the rest of~$\alpha$, we denote them by $a(p), a^-(p), b(p), b^-(p), z(p)$ and~$z^-(p)$. Thus:
\begin{txteq}\label{imageofabp}
   For all $p\in\P$, we have $\sigma(a(p))=\sigma(b(p))=z(p)$ and $\sigma(a^-(p))=\sigma(b^-(p)) = z^-(p)$. These points are vertices and hence lie in $T$.
\end{txteq}
This completes the second part of our proof.

\medbreak

In the third part, we now wish to determine the order in which our homotopy $F$ has to cancel the pairs in~$\P$. This order will have to satisfy an obvious necessary condition imposed by the relative position of the passes in these pairs. Indeed, our definition of~$\P$ implies that, given two pairs $p,p'\in\P$, the intervals $(a(p),b(p))$ and $(a(p'),b(p'))$ are either disjoint or nested; accordingly, we call $p$ and $p'$ \emph{parallel} or \emph{nested}. If $p$ and $p'$ are nested and $[a(p),b(p)]$ contains $[a(p'),b(p')]$, we say that $p$ \emph{surrounds} $p'$ and write $p\ge p'$. This is clearly a partial ordering. In fact, $\le$~is the inverse of a tree order:
\begin{txteq}\label{nestedpairs}
Whenever a pair is surrounded by two other pairs, these latter pairs are nested.
\end{txteq}
This partial ordering $\le$ on $\P$ will have to be respected by any order in which our homotopy $F$ can cancel the pairs in~$\P$: if $p$ surrounds~$p'$, then $p'$ has to be cancelled before~$p$.

Our next aim, therefore, is to extend $\le$ to a total ordering $\preceq$ on~$\P$. We may think of $\preceq$ informally as the `order in which $F$ will cancel the pairs', but should remember that the order type of $\preceq$ may be arbitrarily complicated for a countable linear ordering. Note that, in order for our desired homotopy $F(x,t)$ to be continuous, it will not be enough to take as $\preceq$ the (arbitrary) linear extension of $\le$ defined by the reduction~$R$.

To define $\preceq$, we start by enumerating the elements of $\P$ arbitrarily. Then, recursively for $i=0,1,\dotsc$, let $M_i$ be a maximal chain in $\P\sm (M_0\cup\ldots\cup M_{i-1})$ containing the first pair from this set in the enumeration of~$\P$. (If there is no pair left we terminate the recursion, so all the chains $M_i$ are non-empty.)
Given two pairs $p,p'$, not necessarily distinct, let $p\preceq p'$ if either
\begin{itemize}
\item
  $p$, $p'$ lie in the same $M_i$ and $p\le p'$; or
\item
  $p\in M_j$ and $p'\in M_i$ with $i<j$.
\end{itemize}
Thus, $\preceq$ puts later chains below earlier chains. When $p\preceq p'$ we say that $p$ \emph{precedes} $p'$, and $p'$ \emph{succeeds}~$p$. By \eqref{nestedpairs} and the maximality of the chains~$M_i$, each of the sets $M^n := \bigcup_{i=0}^n M_i$ is closed upwards in~$\le\,$: if $p'\ge p\in M^n$ then also $p'\in M^n$. In words:
\begin{txteq}\label{lateissmall}
  For all $i<j$, no pair in $M_j$ surrounds a pair in $M_i$.
\end{txteq}
By definition of~$\preceq$, this implies that $\preceq$ is indeed a linear extension of~$\le$, i.e.\ that $p\preceq p'$ whenever $p\le p'$: every pair precedes any pair that surrounds it.

Having partitioned the set $\P$ of passes of $\sigma$ into pairs, and having chosen an order $\preceq$ in which we want our homotopy $F$ to cancel them, we next wish to map our pairs $p$ to time intervals $[s(p),t(p)]\sub [0,1]$ in which $F$ can cancel~$p$. These intervals will reflect~$\preceq$ in that
\begin{equation}\label{crudeorder}
t(p) < s(p')\text{ \em whenever } p\prec p';
\end{equation}
   in particular, they will be disjoint for different~$p$.
While $t$ runs through $[s(p),t(p)]$, the path $f_t$ will change only on~$[a(p), b(p)]$, so as to cancel~$p$.

In order to state precisely what we require, we need another definition.
Let $\alpha$ be a topological path in~$H$, with $\alpha(a)=\alpha(b) =: z$ for some $a < b$, and let $\beta$ be the path obtained from $\alpha$ by replacing $\alpha\restr[a,b]$ with the constant map $[a,b]\to \{z\}$. We say that a homotopy from $\alpha$ to~$\beta$ \emph{retracts $\alpha\restr[a,b]$ to~$z$} if it is relative to $[0,a]\cup [b,1]$, time-injective, and maps $[a,b]\times[0,1]$ to~$\alpha([a,b])$. If $\alpha([a,b])\sub X\sub H$, we may also say that this retraction is performed \emph{in~$X$}.

Our paths~$f_t$ will satisfy the following assertions:
\begin{txteq}
  The passes of $f_t$ are also passes of~$\sigma$; thus, $f_t \restr [c,d] = \sigma \restr [c,d]$ for every $t\in [0,1]$ and every domain $[c,d]$ of a pass of $f_t$.
\end{txteq}
\begin{txteq}
  For every $p\in\P$, the passes of $f_{s(p)}$ are exactly those passes of $\sigma$ that are contained in pairs $p'\succeq p$; in particular, $p$~is a pair of passes of $f_{s(p)}$.
\end{txteq}
\begin{txteq}\label{localhomotopies}
  For every $p\in\P$, the path $f_{t(p)}$ is obtained from $f_{s(p)}$ by cancelling the pair~$p$. This is achieved by a homotopy $(f_t)_{t\in[s(p),t(p)]}$ that first retracts $f_{s(p)} \restr [a^-(p),b^-(p)]$ to~$z^-(p)$ in~$T$ and then retracts the resulting path $f_t \restr [a(p),b(p)]$ to~$z(p)$ in~$e(p)$.
\end{txteq}

\smallskip
The first part of the homotopy in~\eqref{localhomotopies} will be obtained by applying~\eqref{pathsinT}, with $\alpha = f_{s(p)}\restr[a^-(p),b^-(p)]$ and $\beta\colon [a^-(p),b^-(p)]\to\{z^-(p)\}$. This will turn $f_{s(p)}$ into a path $f_t$ mapping $[a(p),b(p)]$ onto the chord~$e(p)$, to which the second part of the homotopy is then applied.

Let us note for later use:
\begin{equation}\label{meetp}
 \text{\em The path $f_{t(p)}$ maps $[a(p),b(p)]$ to the vertex~$z(p)$.}
\end{equation}

With this preview of how the linear order $\preceq$ of cancellations of passes will be implemented by~$F$, we complete the third part of our proof.

\medbreak

In the fourth part, we now turn to the reason why we have not chosen the intervals $[s(p),t(p)]$ explicitly yet. This is because observing the rules just outlined will not suffice to make our homotopy $F$ continuous.

To see this, consider a bipartition $r = (P^-,P^+)$ of $\P$ into non-empty sets $P^-,P^+$ such that $p^-\prec p^+$ whenever $p^-\in P^-$ and $p^+\in P^+$. Given $i\in\N$, let
$$P^+_i := P^+\cap M_i\quad\emtext{and}\quad P^-_i := P^-\cap M_i\,.$$
Since $P^-$ is non-empty, $P^+$ meets only finitely many~$M_i$. Denote the largest~$i$ with $P^+_i\ne\es$ by~$i(r)$, and call it the \emph{index} of~$r$. Then $P^+_{i(r)}$ is an initial segment of~$P^+$. Since the elements of $M_i$ are nested, $\preceq$ coincides on $P^+_{i(r)}$ with~$\le$.

Let us call $r$ \emph{critical} if $P^+$ has no least element (with respect to~$\preceq$). For critical partitions~$r$, we define
$$[a^+,b^+] := \bigcap_{p\in P^+_{i(r)}} [a(p),b(p)]\,.$$
   \vskip-6pt\noindent
Since $P^+_{i(r)}$ is countable, it has an (infinite) coinitial sequence $p^+_0 > p^+_1 > \dots$. Then $\lim a(p^+_n) = a^+$. As $\sigma$ is continuous, the vertices $z(p^+_n) = \sigma(a(p^+_n))$ converge in $H$ to~$\sigma(a^+)$. As only finitely many of the vertices $z(p^+_n)$ can coincide (Lemma~\ref{pass}), $\sigma(a^+)$~must be an end, which we denote by
  $$z(r) \in\Omega(G)\,.$$

Call a point $x\in [a^+,b^+]$ \emph{critical} (with respect to~$r$) if $x\in (a(q),b(q))$ also for some $q\in P^-$. Then
 $$P_x^- := \big\{q\in P^- \mid x \in (a(q),b(q))\big\}\ne\es$$
is a $\le$-chain, which may or may not have a greatest element. Put
 $$(a_x^-,b_x^-) := \bigcup_{q\in P_x^-} \big(a(q),b(q)\big).$$
   \vskip-6pt\noindent
Every $q\in P_x^-$ is nested with every $p\in P^+_{i(r)}$, since $x\in (a(q),b(q))\cap (a(p),b(p))$. As $q\preceq p$, this means that $q\le p$, so
 $$[a_x^-,b_x^-]\sub [a^+,b^+]\,.$$
The reader may wish to construct some examples of such~$\sigma$ at this point.%
\footnote{Here is one example: Suppose that $\omega$ is a non-trivial end of $H$, and let $e'_0,e'_1,\dotsc$ be a sequence of chords of $T$ that converges to $\omega$. Now let $\sigma$ be a loop in $H$ based at a vertex $v$ that first traverses $\ve'_2,\ve'_3,\dotsc$ and reaches $\omega$ at time $1/3$. For $x\in[2/3,1]$, we let $\sigma(x)\assign\sigma(1-x)$. Thus in $[2/3,1]$, $\sigma$ returns from $\omega$ to~$v$, traversing $\dotsc,\ev'_3,\ev'_2$. Between time $1/3$ and $2/3$ it traverses $\ve'_0,\ve'_1,\ev'_1,\ev'_0$. Let the domains of these passes be $[0.35,0.36],[0.4,0.41],[0.5,0.51]$, and $[0.64,0.65]$. In~$\P$, the unique pass through $\ve'_i$ forms a pair with the unique pass through~$\ev'_i$. In the order~$\prec$, the pairs $p_0,p_1$ that consist of passes through $e'_0$ and through~$e'_1$, respectively, are smaller than alle the other pairs. Now consider the partition $r = (P^-,P^+)$ of~$\P$ with $P^- = \{p_0,p_1\}$. Then $[a^+,b^+]=[1/3,2/3]$, and the points $x\in(0.35,0.65)$ are critical with respect to~$r$. For $x\in (0.4,0.51)$ we have $P^-_x = \{p_0, p_1\}$, for the other $x$ in $(0.35,0.65)$ we have $P_x^- = \{p_0\}$, but for all $x\in(0.35,0.65)$ we have $(a^-_x,b^-_x)=(0.35,0.65)$. The points in $(1/3,2/3)$ outside $(0.35,0.65)$ are not critical with respect to~$r$.}

To see why such $r$ and $x$ are `critical' for the construction of our homotopy~$F$, assume now that $F$ has been chosen so as to satisfy \eqref{crudeorder}--\eqref{localhomotopies}, but apart from this arbitrarily.
Since $p^+_0\succ p^+_1\succ\dots$ we then have $t^+_0 > t^+_1 > \dots$, where $t^+_n := t(p^+_n)$ is the time at which $F$ has just cancelled the pair~$p^+_n$. Let
 $$t^+ := {\textstyle \lim_n t^+_n}\,.$$
For every $x\in [a(p^+_n),b(p^+_n)]$, statement~\eqref{meetp} yields $f_{t^+_n}(x) = z(p^+_n) = \sigma(a(p^+_n))$. Every $x\in [a^+,b^+]$ satisfies this for all~$n$, so for such $x$ the continuity of $F$ and~$\sigma$ imply
\begin{equation}\label{A+}
 {\textstyle
 f_{t^+} (x) = \lim_n f_{t^+_n}(x) = \lim_n \sigma(a(p^+_n)) = \sigma (a^+) = z(r)\,.
 }
\end{equation}
Now assume that $x$ is critical. Since $P^-_x$ is a countable $\le$-chain, it contains a (finite or infinite) cofinal sequence $q_0 < q_1 < \dots$. Then $\lim_n a(q_n) = a^-_x$. Let
 $$t^-_x := {\textstyle \lim_n t(q_n)}\,.$$
As $q\prec p$ for all $q\in P^-$ and $p\in P^+$, we have $t^-_x \le t^+$ by~\eqref{crudeorder}. As earlier, the fact that $x\in [a(q_n),b(q_n)]$ for all~$n$ implies
\begin{equation}\label{Ax}
 {\textstyle
  f_{t^-_x} (x) = \lim_n f_{t(q_n)}(x) \specrel={\eqref{meetp}} \lim_n z(q_n)
    = \lim_n \sigma(a(q_n)) = \sigma (a^-_x) =: z_x\,.
 }
\end{equation}

If $P^-_x$ has a greatest element~$q_n$, then $z_x$ will be the vertex~$z(q_n)$; if not, it will be an end. But this end need not be~$z(r)$. And if $z_x\ne z(r)$, we shall have a problem: to avoid a contradiction between \eqref{A+} and~\eqref{Ax}, we shall have to ensure that $t^-_x\ne t^+$ (which does not follow from the assumptions we have made about~$F$ so far), and define $F(x,t)$ so as to move $z_x$ to~$z(r)$ in the time interval $[t_x^-, t^+]$. Let us call our critical partition \emph{bad} if $z_x\ne z(r)$ for some critical point~$x$, which we then also call \emph{bad}.

In general, $\P$~may have uncountably many critical partitions, and one of these can have bad points~$x$ with infinitely many different~$z_x$. It will be crucial for our construction of~$F$, therefore, to prove that there can be only countably many {\em bad} partitions. For each of these, we shall be able to deal with all its bad points simultaneously.

For our proof that there are only countably many bad partitions, let us show first that
\begin{txteq}\label{MiTail}
   $P^-_{i(r)}\ne\es$ for every bad partition $r = (P^-,P^+)$, indeed for every critical partition that has a critical point.
\end{txteq}
To prove~\eqref{MiTail} let $i=i(r)$, let $x$ be a critical point for~$r$, and consider any $q\in P_x^-$. If $q\in M_i$ we are done. If not, then $q\in M_j$ for some $j>i$, because there are $p\in P^+_i$ with $q\prec p$. By the maximality of~$M_i$, this means that $q\not\le p$ for some $p\in M_i$. Since $q\le p$ for all $p\in P^+_i$ (as shown earlier), it follows that $P^-_i = M_i\sm P^+_i\ne\es$, as claimed.

By~\eqref{MiTail}, the nested union
 $$\big(a^-,b^-\big)
    := \bigcup_{p\in P^-_{i(r)}} \big( a(p),b(p)\big )$$
is not empty. As $q\le p$ for all $q\in P^-_i$ and~$p\in P^+_i$, we have $[a^-,b^-]\sub [a^+,b^+]$:
 $$a^+ \le a^-\quad\emtext{and}\quad b^- \le b^+.$$

Assuming that $r$ is bad, let us show that at least one of these inequalities is strict. Pick a bad point $x\in [a^+,b^+]$. Assume first that $x\in (a^-,b^-)$. Then $x$ lies in $(a(p),b(p))$ for some, and hence for all large enough, $p\in P^-_{i(r)}$. But then all these $p$ lie in~$P_x^-$, so $(a^-,b^-) \sub (a_x^-,b_x^-)$ and hence $a_x^-\le a^-$. (In fact, we have equality.) Recall that $a^+\le a_x^-$. Since $x$ is a bad point, we have $\sigma(a_x^-)\ne \sigma(a^+)$ and therefore $a_x^-\ne a^+$. Hence $a^+ < a_x^- \le a^-$, as desired.

On the other hand if $x\notin (a^-,b^-)$, then $x\in [a^+,a^-]\cup [b^-,b^+]$. We assume that $x\in [a^+,a^-]$ and show $a^+ < a^-$; the case of $x\in [b^-,b^+]$ is analogous, showing $b^- < b^+$. Pick $q\in P_x^-$. Then $x\in (a(q),b(q))\sub(a(p),b(p))$ for every $p\in P^+_{i(r)}$, so $a^+ \le a(q) < x\le a^-$ by the definition of~$a^+$.

We have thus shown that, for every bad partition~$r$, the set
 $$D(r) := (a^+,a^-)\cup (b^-,b^+)$$
is non-empty. We next show that these sets are disjoint for distinct $r = (P^-,P^+)$ and $\tilde r = (\tilde P^-,\tilde P^+)$ with the same index~$i$. As ${r\ne \tilde r}$, we may assume that there is a pair $p\in P^- \cap \tilde P^+$. Then $p\in M_i$, since $M_j\sub P^-\cap \tilde P^-$ for every~$j>i$, while $M_j\sub P^+\cap \tilde P^+$ for every~$j<i$. But then
 $$a^-\le a(p) < \tilde a^+ < \tilde b^+ < b(p)\le b^-$$
with the obvious notation. Since $D(r)\cap [a^-,b^-]=\es$ while $D(\tilde r)\sub (\tilde a^+,\tilde b^+)$, we have $D(r)\cap D(\tilde r)=\es$ as claimed.

As there are only countably many partition indices~$i$, and for every bad partition $r$ with index $i$ the set $D(r)$ contains a rational, this completes our proof that there are only countably many bad partitions.

\medbreak

Denote the set of all critical and all bad partitions by $\R$ and~$\R'$, respectively. Given $r\in\R$, write $a(r)$ and $b(r)$ for the points $a^+,b^+\in [0,1]$ defined above. Taking limits in~\eqref{imageofabp}, we obtain:
\begin{equation}\label{imageofabr}
  \text{\em For every $r\in\R$ we have $\sigma(a(r))=\sigma(b(r))\
     \big(=z(r)\in\Omega(G)\sub T\big)$.}
\end{equation}

Our plan is to begin the construction of~$F$ by extending our linear ordering $\preceq$ to $\P\cup\R$. We shall then choose disjoint time intervals $[s(q),t(q)]$ for all $q\in \P\cup\R'$, extending~\eqref{crudeorder} to~$\P\cup\R'$. For every $r = (P^-,P^+)\in\R$, the choices made for $\P$ will define times
 $$t_r^+\assign\inf\,\{\,t(p)\mid p\in P^+\}\quad\emtext{and}\quad
   t_r^-\assign\sup\,\{\,t(p)\mid p\in P^-\}\,.$$
In order to satisfy~\eqref{A+}, we shall have to have $f_{t_r^+}$ map all of $[a(r),b(r)]$ to~$\{z(r)\}$. For bad~$r$, this will require the insertion of a local homotopy $f_{s(r)}\to f_{t(r)}$ analogous to~\eqref{localhomotopies}, where $s(r)$ and $t(r)$ are chosen so that 
 $$t_r^-\le s(r) < t(r)\le t_r^+.$$

To extend $\preceq$ from $\P$ to $\P\cup\R$, we simply place all the partitions $r\in\R$ at their natural positions in the chains~$M_{i(r)}$. Indeed, let us extend our partial ordering $\le$ on~$\P$ to $\P\cup\R$ by letting $q\le q'$ whenever $[a(q),b(q)]\sub [a(q'),b(q')]$. Then
 $$\hat M_i := M_i\cup\{\,r\in\R\mid i(r) = i\,\}$$
is easily seen to be a $\le$-chain. We now define $\preceq$ on $\P\cup\R$ as we did on~$\P$: given $q,q'$, we put $q\preceq q'$ if either
\begin{itemize}
\item
  $q$, $q'$ lie in the same $\hat M_i$ and $q\le q'$;
 or\item
  $q\in \hat M_j$ and $q'\in \hat M_i$ with $i<j$.
\end{itemize}

\penalty-99

Let us note a couple of facts about this ordering. The fact that pairs are either nested or disjoint extends at once:
\begin{txteq}\label{surroundsucceed}
  Given $q,q'\in \P\cup\R$ with $q\preceq q'$, either $q\le q'$ or the intervals $(a(q),b(q))$ and $(a(q'),b(q'))$ are disjoint.
\end{txteq}
The following statements can be satisfied by suitable $p\in P^-_{i(r)}$, which we recall is non-empty if $r$ has a critical point~\eqref{MiTail}:
\begin{txteq}\label{pbetweenr}
 For every $r\in\R$ that has a critical point, in particular for every~$r\in\R'$, there is a pair $p\in\P$ such that $p\le r$. Given any $r'\in\R$ with $r'\prec r$, this $p$ can be chosen so that $r'\prec p\prec r$.
\end{txteq}
It is not difficult to describe $\preceq$ directly, without reference to~$\le\,$:
\begin{txteq}\label{directorder}
A partition $(P^-,P^+)\in\R$ precedes all pairs in $P^+$ and succeeds all pairs in~$P^-$. Two partitions $r=(P^-,P^+)$ and $\tilde r=(\tilde P^-,\tilde P^+)$ satisfy $r\preceq\tilde r$ if and only if $\tilde P^+\sub P^+$.
\end{txteq}

Having defined~$\preceq$, we can now choose disjoint intervals $[s(q),t(q)]$ for all $q\in\P\cup\R'$, to satisfy
\begin{equation}\label{crudeorderALL}
  \emtext{$t(q)<s(q')$ whenever $q\prec q'$.}
\end{equation}
This can be done inductively, since $\P\cup\R'$ is countable.

We are finally ready to define our homotopy $F=(f_t)_{t\in[0,1]}$. We first define $f_t$ for all $t\in[0,1]$ outside the set
 $$C := \bigcup_{q\in\P\cup\R'} \big(s(q),t(q)\big).$$
\vskip-\medskipamount\noindent
Given such $t\in [0,1]\sm C$, let 
 $$Q^t := \{\,q\in\P\cup\R'\mid t(q)\le t\,\}\,.$$
For every $x\in[0,1]$, the set
 $$Q_x^t := \{\,q\in Q^t\mid x\in(a(q),b(q))\,\}$$
is a $\le$-chain, by~\eqref{surroundsucceed}. If $Q_x^t\ne\es$, we set
\begin{equation*}
  (a^t_x,b^t_x)\assign\bigcup_{q\in Q_x^t}(a(q),b(q))
\end{equation*}
   \vskip-\smallskipamount\noindent
and define $f_t(x)\assign\sigma(a^t_x) = \sigma(b^t_x) \in T$. (Recall~\eqref{imageofabp} and~\eqref{imageofabr}.)
If $Q_x^t = \es$, we call $x$ \emph{unchanged at time~$t$}, define $f_t(x)\assign\sigma(x)$, and put $a^t_x\assign x =: b^t_x$. Thus,
\begin{txteq}\label{ftx}
   for all $t\notin C$ and all~$x$:  $f_t(x) = \sigma(a^t_x) = \sigma(b^t_x)$.
\end{txteq}

We need to show that these functions $f_t$ are continuous. This will follow from the fact that $\sigma$ is continous, once we have shown the following:
\begin{txteq}\label{atx}
   For all $t\notin C$ and all $x$ with $Q^t_x\ne\es$, the function $f_t$ is constant on $(a^t_x,b^t_x)$ with value~$f_t(x)\in T$.
\end{txteq}
To prove~\eqref{atx}, pick $y\in (a^t_x,b^t_x)$. We show that $Q^t_y\ne\es$ and $(a^t_y,b^t_y)=(a^t_x,b^t_x)$; then $f_t(y)=f_t(x)\in T$ by definition of~$f_t$. By the choice of~$y$, there exists $q\in Q^t$ such that $(a(q),b(q))$ contains both $x$ and~$y$, giving $q\in Q^t_x\cap Q^t_y$. As $Q^t_x$ is a $\le$-chain, we have $q\le q'$ for all large enough $q'\in Q^t_x$. Since $y\in (a(q),b(q))\sub (a(q'),b(q'))$ implies $q'\in Q^t_y$, all large enough $q'\in Q^t_x$ lie in~$Q^t_y$, giving $(a^t_x,b^t_x)\sub (a^t_y,b^t_y)$. Likewise, $(a^t_y,b^t_y)\sub (a^t_x,b^t_x)$ and hence $(a^t_x,b^t_x)=(a^t_y,b^t_y)$. This completes the proof of~\eqref{atx}, and with it the proof that the $f_t$ defined so far are continuous.

We have just shown that $(a^t_y,b^t_y)=(a^t_x,b^t_x)$ for all $x$ and~$y$ with $y\in(a^t_x,b^t_x)$. Therefore, for any $x,y$ the intervals $(a^t_x,b^t_x)$ and $(a^t_y,b^t_y)$ are either identical or disjoint: if $(a^t_x,b^t_x)$ meets $(a^t_y,b^t_y)$, in a point $z$ say, we have $(a^t_x,b^t_x)=(a^t_z,b^t_z)=(a^t_y,b^t_y)$. An immediate consequence of this is the following:
\begin{txteq}\label{unchanged}
   For all $t\in[0,1]\sm C$ and $x\in [0,1]$, the points $a^t_x$ and $b^t_x$ are unchanged at time $t$.
\end{txteq}

It remains to define $f_t$ for $t\in (s(q),t(q))$ with $q\in\P\cup\R'$. Since these intervals are disjoint, $f_{s(q)}$~and $f_{t(q)}$ are already defined. For each~$q$, our aim is to define the functions $f_t$ with $t\in (s(q),t(q))$ as a homotopy between $f_{s(q)}$ and~$f_{t(q)}$. When $q\in\R'$, this homotopy should retract $f_{s(q)}\restr [a(q),b(q)]$ to~$z(q)$ in~$T$ by a direct application of~\eqref{pathsinT}. When $q\in\P$, our plan is to follow~\eqref{localhomotopies} and achieve the same result in two stages. We first wish to use \eqref{pathsinT} to retract $f_{s(q)}\restr [a^-(q),b^-(q)]$ to~$z^-(q)$ in~$T$. This should turn $f_{s(q)}\restr [a(q),b(q)]$ into a path consisting of the two passes of $q$ through~$e(q)$ at the beginning and end, and a constant path with image~$z^-(p)$ in the middle. We then wish to retract this path to~$z(q)$ in~$e(q)$.

In order to apply \eqref{pathsinT} and implement~\eqref{localhomotopies} as just outlined, we have to verify the following prerequisites:
\begin{itemize}
   \item that $f_{s(q)}$ maps both $a(q)$ and $b(q)$ to~$z(q)$;
   \item that $f_{t(q)}$ maps all of $[a(q),b(q)]$ to~$z(q)$;
   \item if $q\in\P$: that $f_{s(q)}$ agrees with $\sigma$ on $D(q) := (a(q),a^-(q))\cup (b^-(q),b(q))$ and maps $[a^-(q),b^-(q)]$ to~$T$;
   \item if $q\in\R'$: that $f_{s(q)}$ maps $[a(q),b(q)]$ to~$T$.
\end{itemize}

Let us prove the first statement, as well as the second statement for $a(q)$ and~$b(q)$. At times $s(q)$ and~$t(q)$, both $a(q)$ and $b(q)$ were unchanged, because they can lie in an interval $(a(q'),b(q'))$ only when $q < q'$ and hence $t(q) < t(q')$. Therefore $f_{s(q)}$ and $f_{t(q)}$ both map $a(q)$ and $b(q)$ to $\sigma(a(q)) = \sigma(b(q)) = z(q)$. (Recall~\eqref{imageofabp} and~\eqref{imageofabr}.)

Next, we prove the second statment for $x\in (a(q),b(q))$. At time~$t(q)$, none of these $x$ was unchanged, since $q\in Q^{t(q)}_x$ for all these~$x$. In fact, $q$~is the greatest element of each of the $\le$-chains $Q^{t(q)}_x$. Therefore $(a^{t(q)}_x,b^{t(q)}_x) = (a(q),b(q))$, and hence $f_{t(q)} (x) = \sigma(a(q)) = z(q)$ by~\eqref{ftx}.

To prove the first part of the third statement note that, if $q\in\P$, all the points in $D(q)$ are still unchanged at time~$s(q)$. Hence $f_{s(q)}$~agrees with $\sigma$ on~$D(q)$.\looseness=-1

To prove the rest of the third and the fourth statement, consider any point $x$ in~$[a(q),b(q)]$, but not in $D(q)$ if $q\in\P$. We have to show that $f_{s(q)}(x)\in T$. This follows from~\eqref{atx} if $x$ is not unchanged at time~$s(q)$, so assume that it is. Then $f_{s(q)}(x) = \sigma(x)$. If this point is not in~$T$, then $x$ is an inner point of the domain of a pass contained in some $p\le q$. If $p=q$ this means that $x\in D(q)$, contradicting our choice of~$x$. Hence $p < q$, and $t(p) < s(q)$ by~\eqref{crudeorderALL}. Thus $p\in Q^{s(q)}_x \ne\es$, contradicting our assumption that $x$ was unchanged at time~$s(q)$.

Having checked the prerequisites, we may now apply~\eqref{pathsinT} as outlined earlier to choose $f_t$ for all $t\in (s(q),t(q))$ as follows:
\begin{txteq}\label{localFr}
  For every $r\in\R'$, the paths $(f_t)_{t\in[s(r),t(r)]}$ form a homotopy retracting $f_{s(r)} \restr [a(r),b(r)]$ to~$z(r)$.
\end{txteq}
\begin{txteq}\label{localFp}
  For every $p\in\P$, the paths $(f_t)_{t\in[s(p),t(p)]}$ form a homotopy that first retracts $f_{s(p)} \restr [a^-(p),b^-(p)]$ to~$z^-(p)$ in~$T$ and then retracts the resulting path on $[a(p),b(p)]$ to $z(p)$ in~$e(p)$.
\end{txteq}\smallskip

For our later proof that $F$ is continuous, let us note an important property of the homotopies in \eqref{localFr} and~\eqref{localFp}. Let $U$ be a neighbourhood in $|G|$ of an end~$\omega$; then $U\cap H$ is a neighbourhood of $\omega$ in $H$. By Lemma~\ref{shortarcs} there is a basic open neighbourhood $\hat U\sub U$ of $\omega$ in $|G|$ (i.e.,\ $\hat U=\hat C(S,\omega)$ for some finite set $S$ of vertices) such that for any $x,y\in \hat U\cap T$, the arc $xTy$ is contained in $U$ (and thus in $U\cap H$). Let $S'$ be the set of neighbours of $S$ in $C(S,\omega)$, note that these are finitely many. Call $U'\assign \hat C(S',\omega)$ a \emph{core of~$U$ around~$\omega$}. If $U'$ contains a vertex~$z(p)$, then $\hat U$ contains its neighbour~$z^-(p)$ and the edge~$e(p)$. The next statement therefore follows from the fact that the homotopies used in \eqref{localFr} and~\eqref{localFp} either run inside $e(p)$ or else are time-injective (by our definition of retracting).
\begin{txteq}\label{convex}
Let $q\in\P\cap\R'$ and $x\in (a(q),b(q))$, and let $U'\sub |G|$ be a core of a neighbourhood $U$ around an end. If both $f_{s(q)}(x)$ and $f_{t(q)}(x)$ lie in~$U'$, then $f_t(x)\in U$ for all $t\in[s(q),t(q)]$.
\end{txteq}
 
The definition of~$F$ is now complete. Note finally that
\begin{equation}\label{imageofF}
  \im F=\im \sigma\,.
\end{equation}
This completes part four of our proof.

\medbreak

It remains to show that the set $(f_t)_{t\in[0,1]}$ of paths in $H$ is a homotopy between $\sigma$ and~$\tau$. Since $Q^0=\es$, we have $f_0=\sigma$. Rather than proving that $f_1=\tau$, let us show that $\im f_1\sub T$; then \eqref{pathsinT} and our assumption that $\im\tau\sub T$ imply $f_1\sim\tau$, which is good enough. For a proof that $\im f_1\sub T$ consider any $x\in[0,1]$. If $x$ is not unchanged at time $t=1$, then $f_1(x)\in T$ by~\eqref{atx}. If it is, then $f_1(x)=\sigma(x)$. Since $\sigma$ is straight, we have $f_1(x)\in T$ unless $x$ lies in the interior of the domain of a pass of $\sigma$. But this pass is contained in some pair $p\in Q^1_x$, so $x$ was not unchanged at time~$1$, contradiction.

We now prove that $F$ is continuous. Let $x,t\in[0,1]$ be given, and let $U$ be any neighbourhood of $F(x,t)$ in~$|G|$; then $U\cap H$ is a neighbourhood of $F(x,t)$ in~$H$. If $F(x,t)$ is an end, let $U'$ be a core of $U$ around that end. We shall find an $\eps>0$ such that $F((x-\eps,x+\eps), (t-\eps,t+\eps))\subset U$, proceeding in two steps.

\begin{enumerate}[\rm 1.]
\item\label{frombelow}
We find an $\eps$ for which $F((x-\eps,x+\eps),(t-\eps,t])\subset U$.

For every $\eps>0$, let
 $$Q_{\eps} := \big\{\,q\in\P\cup\R': (t-\eps,t]\cap (s(q),t(q))\ne\es\,\big\}\,.$$

If $Q_{\eps}=\es$ for some~$\eps$, then for all $t'\in(t-\eps,t]$ we have $Q^{t'} = Q^t$ and hence $f_{t'}=f_t$. As $f_t$ is continuous, there is an $\eps_0 < \eps$ such that $F((x-\eps_0,x+\eps_0),\penalty-100 (t-\eps_0,t])\subset U$.

If $Q_{\eps}$ is never empty but finite for some~$\eps$, there exists $q\in\P\cup\R'$ such that $t\in(s(q),t(q)]$. Since $(f_t)_{t\in [s(q),t(q)]}$ was defined as a homotopy, in \eqref{localFr} or~\eqref{localFp}, we have $F((x-\eps_0,x+\eps_0),(t-\eps_0,t])\sub U$ for small enough $\eps_0<\eps$.

We may thus assume that $Q_{\eps}$ is infinite for every $\eps>0$, so it has no maximal element with respect to~$\preceq$. By the definition of $\preceq$, we can choose $\eps_0$ small enough that all pairs in $Q_{\eps_0}$ lie in the same chain~$M_i$. Then $i=i(r)$ for all partitions~$r$ in~$Q_{\eps_0}$, e.g.\ by~\eqref{MiTail}. Hence $Q_{\eps_0}\sub\hat M_i$, so the intervals $(a(q),b(q))$ with $q\in Q_{\eps_0}$ are nested; put
 $$(a,b) := \bigcup_{q\in Q_{\eps_0}} (a(q),b(q))\,.$$
   \vskip-\medskipamount
From \eqref{imageofabp}, \eqref{imageofabr} and Lemma~\ref{pass} we know that $\sigma(a)$ is the limit of an infinite sequence of ends or distinct vertices $\sigma(a(q)) = z(q)$, so $\sigma(a)$ must be an end. 

Our assumption that $Q_{\eps_0}$ has no maximal element also implies that $t\notin C$, so $F(x,t) = f_t(x)=\sigma(a^t_x)$ by~\eqref{ftx}. Moreover, there are points $t'\notin C$ arbitrarily close below~$t$.
If $F(x,t)$ is an end, it will suffice to find an $\eps < \eps_0$ such that ${t-\eps\notin C}$ and $F(x',t')\in U'$ for all $x'\in (x-\eps,x+\eps)$ and all $t'\in [t-\eps,t]\sm C$: then for all such $x'$ and all $t''\in (t-\eps,t)\cap C$ we shall have $F(x',t'')\in U$ by~\eqref{convex}.

 We distinguish two cases: that $x$ lies in~$(a,b)$ or not.

\begin{enumerate}[\rm (i)]
\item\label{meetingxbelow}
Suppose that $x\in (a,b)$. Then $a^t_x = a$, so \eqref{ftx} yields $F(x,t) = \sigma(a)$, which we know is an end. Since $\sigma$ is continuous, there is a $\delta>0$ such that $\sigma$ maps $[a,a+\delta)$ to~$U'$. Choose $\eps_1<\eps_0$ so that for all $q\in Q_{\eps_1}$ we have $a(q)\in [a,a+\delta)$ and  $(x-\eps_1,x+\eps_1)\subset(a(q),b(q))$.

Pick $q_0\in Q_{\eps_1}$. Choose $\eps_2 < \eps_1$ small enough that $t-\eps_2 > t(q_0)$, and so that $t-\eps_2\notin C$. Then for all $x'\in (x-\eps_2,x+\eps_2)$ and $t'\in [t-\eps_2,t]\sm C$ we have $q_0\in Q^{t'}_{x'}\ne\es$, and \eqref{ftx} yields $F(x',t') = \sigma(a^{t'}_{x'})$. These points lie in~$U'$, since $a\le a^{t'}_{x'}\le a(q_0)< a+\delta$.

\item\label{avoidingxbelow}
Suppose now that $x\notin(a,b)$, say $x\le a$. Note that for all $x'\notin (a,b)$ and $t'\in (t-\eps_0,t]\sm C$ we have $Q^{t'}_{x'} = Q^t_{x'}$, and hence $f_{t'}(x') = f_t(x')$.

If $x=a$, pick $q_0\in Q_{\eps_0}$. Then $f_{t(q_0)} (x) = f_t(x)\in U'$. (Note that as $a=a^t_y$ for all $y\in (a,b)$, we have $f_t(x)=f_t(a)=\sigma(a)$ by~\eqref{unchanged}, hence $f_t(x)$ is an end.) As $f_{t(q_0)}$ is continuous, there is an $\eps_1 \le t-t(q_0)\ (\le\eps_0)$ such that $t-\eps_1\notin C$ and ${f_{t(q_0)}((x-\eps_1, x+\eps_1))}\subset U'$. We show that $F(x',t')\in U'$ for all $x'\in (x-\eps_1, x+\eps_1)$ and $t'\in [t-\eps_1,t]\sm C$. If $Q^{t'}_{x'}=Q^{t(q_0)}_{x'}$, then $f_{t'}({x'})=f_{t(q_0)}({x'})\in U'$. Otherwise $Q^{t'}_{x'}\supsetneq Q^{t(q_0)}_{x'}$ (since $t(q_0) < t'$), and hence ${x'}\in (a,b)$. Then
  $$x-\eps_1 < a \le a^{t'}_{x'} < {x'} < x+\eps_1\,.$$
As $a^{t'}_{x'}$ is unchanged at time~$t'$~\eqref{unchanged} and hence also at time $t(q_0) < t'$, we have $f_{t'}({x'}) = \sigma(a^{t'}_{x'}) = f_{t(q_0)}(a^{t'}_{x'})$ by~\eqref{ftx}. This last point lies in~$U'$, by the above inequality and the choice of~$\eps_1$.

If $x<a$ then, as $f_t$ is continuous, there is an ${\eps_1} < \eps_0$ such that $x+{\eps_1} < a$ and $F(x',t)\in U$ for all $x'\in(x-{\eps_1},x+{\eps_1})$. Choose ${\eps_1}$ so that $t-{\eps_1}\notin C$. And as noted earlier, $F(x',t') = F(x',t)$ for every $t'\in[t-{\eps_1},t]\sm C$ and all these~$x'$, so $F(x',t')\in U$. On the other hand for $t'\in (t-{\eps_1},t) \cap C$, say $t'\in (s(q),t(q))$ with $q\in Q_{\eps_1}$, we have $f_{t'} (x') = f_{t(q)} (x')$ for these same~$x'$, because $x'\notin (a,b)\supseteq (a(q),b(q))$ and hence $f_{t'}(x')$ remained constant throughout the homotopy defined in \eqref{localFr} or~\eqref{localFp}. Since $t(q)\notin C$, we are thus home by the case of $t'\in[t-{\eps_1},t]\sm C$.
  \end{enumerate}

\item\label{fromabove}
  We find an $\eps$ for which $F((x-\eps,x+\eps),[t,t+\eps))\subset U$.

For every $\eps>0$, let
  $$Q_{\eps} := \big\{ q\in\P\cup\R': [t,t+\eps)\cap (s(q),t(q))\ne\es\big\}.$$
As in the first step, we may assume that $Q_{\eps}$ is infinite for every~$\eps$. Thus ${Q_{\eps}\ne\es}$, but $Q_{\eps}$ has no least element in~$\preceq$. 
Then $t\notin C$, so the sets
 $$P^+\assign\{\,p\in\P \mid s(p)\ge t\,\}\quad\text{and}\quad
       P^-\assign\{\,p\in\P \mid t(p)\le t\,\}$$
partition~$\P$. From~\eqref{pbetweenr} we know that $P^+$ meets every~$Q_{\eps}$. So $P^+$ has no least element in~$\preceq$, and
  \begin{equation}\label{earlypairs}
    t=\inf \{\,t(p) \mid p\in P^+\}.
  \end{equation}
Let us write $r\assign (P^-,P^+)$. But note that $P^-$ may be empty, in which case $r\notin\R$ and every $Q_\eps$ might meet infinitely many chains~$M_i$.

If $F(x,t)$ is an end then, as in Step~\ref{frombelow}, it will suffice to find an $\eps < \eps_0$ such that $t+\eps\notin C$ and $F(x',t')\in U'$ for all $x'\in (x-\eps,x+\eps)$ and all $t'\in [t,t+\eps]\sm C$.

We distinguish two cases.
  \begin{enumerate}
  \item\label{meetingxabove}
Our first case is that for every $\eps$ there is a $q\in Q_{\eps}$ with $x\in(a(q),b(q))$. Depending on whether $P^-$ is empty or not, we shall in two different ways define an end $z(r)$ and an interval $[a(r),b(r)]$ containing~$x$, and in each case prove that
 \begin{equation}\label{zr}
 f_t \emtext{ maps } (a(r),b(r)) \emtext{ to } z(r).
 \end{equation}
We shall then use~\eqref{zr} to find the $\eps$ desired in Step~\ref{fromabove}.

We first assume that $P^-\not=\emptyset$. Then $r$ is a critical partition, so $z(r)$ is defined and is an end. Every $Q_\eps\cap\P$ meets only finitely many chains~$M_i$, and for the largest of these~$i$ we have ${Q_{\eps_0}\sub\hat M_i}$ for some small enough~$\eps_0$. Then the intervals $[a(q),b(q)]$ with $q\in Q_{\eps_0}$ are nested, and
  $$[a(r),b(r)] = \bigcap_{p\in P^+_i} [a(p),b(p)]
     = \bigcap_{q\in Q_{\eps_0}} [a(q),b(q)]$$
by definition of $a(r)$ and~$b(r)$, and~\eqref{pbetweenr}. By our assumption for Case~\ref{meetingxabove}, $x\in [a(r),b(r)]$.

If $r$ is bad, then $r$ lies in $\R'$ and precedes all pairs in $P^+$. By~\eqref{crudeorderALL} and~\eqref{earlypairs} we have $t(r)\le t$, so $r\in Q^t$. In fact, $r$~is the greatest element of~$Q^t$. For $r$ succeeds every $p\in Q^t\cap\P$, since these $p$ lie in~$P^-$. But then $r$ also succeeds any $r'\in Q^t\cap\R'$: otherwise $r\prec p\prec r'$ for some~$p\in P^+$ by~\eqref{pbetweenr}, while $t(p)\le t(r')\le t$ (by~\eqref{crudeorderALL} and $r'\in Q^t$) implies that $p\in P^-$. Now as $r$ is the greatest element of~$Q^t$, it is also the greatest element of~$Q^t_y$ for every $y\in (a(r),b(r))$, giving $a^t_y = a(r)$. As $t\notin C$, \eqref{ftx} yields
 $$f_t(y) = \sigma(a^t_y) = \sigma(a(r)) \specrel={\eqref{A+}} z(r)\,,$$
completing the proof of~\eqref{zr} for the case that $P^-\ne\es$ and $r$ is bad.
    
Let us suppose now that $r$ is not bad (so $r\in\R\sm\R'$), and once more show that $f_t(y) = z(r)$ for every $y\in (a(r),b(r))$. As before, we have $f_t(y) = \sigma(a^t_y)$ by~\eqref{ftx}. If $y$ is critical, then $Q^t_y\ne\es$, and hence $a^t_y = a^-_y$ by~\eqref{pbetweenr} and the definitions of $r$, $a^-_y$ and~$a^t_y$. Thus
 $$f_t(y) = \sigma(a^t_y) = \sigma(a^-_y) = z_y = z(r)$$
since $y$ is not bad. Now assume that $y$ is not critical. By definition of~$r$, this means that
\begin{equation}\label{Qty}
Q^t_y\cap\P = \es\,.
\end{equation}

We first prove that $y$ is unchanged at time~$t$. Indeed, otherwise $Q^t_y\ne\es$, and by~\eqref{Qty} there exists an $\tilde r=(\tilde P^-,\tilde P^+)\in\R'$ such that $y\in (a(\tilde r),b(\tilde r))$ and $t(\tilde r)\le t$. By~\eqref{crudeorderALL}, $\tilde r$~precedes all pairs in $P^+$, which by \eqref{directorder} implies that $\tilde P^+\supset P^+$. As $\tilde r\not= r$ (since $\tilde r\in\R'$ but $r\notin\R'$), there exists a pair $p\in\tilde P^+\cap P^-$. As all sufficiently early pairs of $\tilde P^+$ lie in~$M_{i(\tilde r)}$, we can find this $p$ in~$M_{i(\tilde r)}$, giving $\tilde r\le p$. But then  $y\in (a(\tilde r),b(\tilde r))\sub (a(p),b(p))$ and $t(p)\le t$, contradicting~\eqref{Qty}.

Thus $y$ is unchanged at time~$t$. Then~\eqref{ftx} yields $f_t(y) =\sigma(y)$, so let us show that $\sigma(y)=z(r)$. Our aim is to prove $\sigma(y)=z(r)$ using~\eqref{imageofsigma}. Let $[a,b]\ni y$ be a maximal interval with the property that $(a,b)$ (which is allowed to be empty) avoids every domain of a pass of $\sigma$. As every neighbourhood of $a(r)$ or $b(r)$ meets the domain of a pass of $\sigma$ (namely, in $a(p)$ or $b(p)$ for every sufficiently small $p\in P^+_{i(r)}$) we have $[a,b]\sub[a(r),b(r)]$. We first prove that every point in $[a,b]$ is unchanged at time $t$. Indeed, otherwise there is a $q\in Q^t$ for which $(a(q),b(q))$ meets $[a,b]$. As $y$ is unchanged at time $t$ we have $(a(q),b(q))\not\supset[a,b]$ and thus $a(q)\in(a,b)$ or $b(q)\in(a,b)$, say $a(q)\in(a,b)$. If $q\in\P$, then $[a(q),a^-(q)]$ is the domain of a pass of $\sigma$ that meets $(a,b)$, a contradiction. If $q=(\tilde P^-,\tilde P^+)\in\R'$, then since $[a(q),b(q)]=\bigcap_{p\in \tilde P^+_{i(q)}}[a(p),b(p)]$ there is a pair $p\in \tilde P^+_{i(q)}$ with $a(p)\in(a,a(q))\sub (a,b)$, with a similar contradiction. Hence every point in $[a,b]$ is unchanged at time $t$ and thus $f_t \restr [a,b] = \sigma \restr [a,b]$.

Let us show that $\sigma(a)=\sigma(b)=z(r)$: then either $[a,b]=\{y\}$ and thus $\sigma(y)=z(r)$, or $\sigma$ maps all of~$[a,b]$, including~$y$, to $z(r)$ by~\eqref{imageofsigma}. We prove $\sigma(a)=z(r)$; the proof that $\sigma(b)=z(r)$ is analogous. If $a=a(r)$, then $\sigma(a) = \sigma(a(r)) = z(r)$, by definition of~$z(r)$. If $a\ne a(r)$ then, by the choice of $[a,b]$, there is a sequence $y_0<y_1<\ldots$ of points in $[a(r),b(r)]$ such that $\lim_n y_n=a$ and such that every $y_n$ lies in the interior of the domain of a pass of~$\sigma$ (possibly the same for all~$n$), and hence is critical. As $a$ is unchanged at time~$t$, and hence $a\notin (a(q),b(q))$ for every $q\in Q^t$, we have $y_n \le b^t_{y_n} \le a$ and hence also $\lim_n b^t_{y_n}=a$. As $\sigma$ is continuous, the points $\sigma(b^t_{y_n})$ converge in~$H$ to~$\sigma(a)$. Since each $y_n$ is critical but not bad, we have $\sigma(b^t_{y_n})=z(r)$ for every $n$, and thus $\sigma(a)=z(r)$. This completes the proof of~\eqref{zr} for the case of $P^-\ne\es$.

We now assume that $P^-=\es$. Then every $x\in [0,1]$ is unchanged at time~$t$, since $Q^t_x\ne\es$ would imply $P^-\ne\es$ by~\eqref{pbetweenr} and~\eqref{crudeorderALL}. Thus, $f_t = \sigma$.
Consider the $\le$-chain $Q^1_x$ of all $q\in\P\cup\R'$ with $x\in(a(q),b(q))$. By our assumption for Case~\ref{meetingxabove},
\begin{equation}\label{1xeps}
  Q^1_x\cap Q_\eps\ne\es \quad {\hbox{\em for all $\eps>0$.}}
\end{equation}
Since $Q^t_x=\es$ this means that~$Q^1_x$, like~$Q_\eps$, has no least element, and by~\eqref{pbetweenr} neither does $Q^1_x\cap\P = Q^1_x\cap P^+$. Therefore
\begin{equation*}
  [a(r),b(r)] \assign \bigcap_{p\in Q^1_x\cap P^+}[a(p),b(p)] = 
	 \bigcap_{q\in Q^1_x}[a(q),b(q)]\,.
\end{equation*}
Pick $p_0,p_1,\dotsc\in Q^1_x\cap P^+$ with $\lim_n a(p_n)=a(r)$. As $\sigma$ is continuous, $\lim_n z(p_n)=\sigma(a(r))=:z(r)$. (Recall that $z(p_n)=\sigma(a(p_n))$.) By~\eqref{imageofabp} and Lemma~\ref{pass}, $z(r)$ is an end.

To complete the proof of~\eqref{zr}, we show that the interval $(a(r),b(r))$ avoids every domain of a pass of~$\sigma$: then it is maximal with this property, and \eqref{zr} follows from~\eqref{imageofsigma} and the fact that $f_t=\sigma$. (For the application of~\eqref{imageofsigma} note that $\sigma(b(r))=\sigma(a(r))$, by taking limits in~\eqref{imageofabp}.) Suppose that $(a(r),b(r))$ contains a point $y$ from the domain of a pass of~$\sigma$, say of the pair~$p$. If $p$ was an element of~$Q^1_x$, there would be another pair $p'\le p$ in~$Q^1_x$, with $y\notin (a(p'),b(p'))$. As this contradicts the choice of $y$ as a point in~$(a(r),b(r))$, we have $p\notin Q^1_x$. Thus, $x\notin (a(p),b(p))$, and hence $p\not\ge q$ for all $q\in Q^1_x$. But $p$ is nested with every such~$q$, since $y\in (a(p),b(p))\cap (a(q),b(q))$. Therefore $p < q$, and hence
 $$t < t(p) < s(q) \quad {\hbox{for all $q\in Q^1_x\,$,}}$$
by~\eqref{crudeorderALL} and since $p\notin P^-=\es$. But then for $\eps < t(p)-t$ we have $Q_{\eps}\cap Q^1_x=\es$, contradicting~\eqref{1xeps}. This completes the proof of~\eqref{zr}.

We have thus shown for both sets of definitions that $f_t$ maps $(a(r),b(r))$, and hence also $[a(r),b(r)]$, to the end~$z(r)$. As $x\in [a(r),b(r)]$, we in particular have $z(r) = f_t(x)\in U'$. As $f_t$ is continuous, there is a $\delta > 0$ such that $f_t$ maps $(a(r)-\delta,b(r)+\delta)$ to~$U'$. By the definition of $[a(r),b(r)]$ (in either case) we can find a pair $p_0\in P^+$ such that $a(p_0) \in (a(r)-\delta,a(r))$ and $b(p_0)\in(b(r),b(r)+\delta)$. Choose $\eps$ so that $t+\eps\notin C$, and small enough that $t+\eps < t(p_0)$ as well as $(x-\eps,x+\eps) \subset (a(p_0),b(p_0))$. Then for all $t'\in[t,t+\eps]\sm C$ and $x'\in(x-\eps,x+\eps)$ we have $p_0\in Q^{t(p_0)}_{x'}\supseteq Q^{t'}_{x'}$ and hence $a(p_0) = a^{t(p_0)}_{x'}\le a^{t'}_{x'}$. Thus,
    \begin{equation*}
      a(r)-\delta < a(p_0) \le a^{t'}_{x'} \le x' < x+\eps < b(r)+\delta\,,
    \end{equation*}
giving $f_t(a^{t'}_{x'}) \in U'$ by the choice of~$\delta$. But $a^{t'}_{x'}$ is unchanged at time~$t'$~\eqref{unchanged}, and hence also at time~$t\le t'$. So this latter point is just~$\sigma(a^{t'}_{x'})$, giving $F(x',t')= \sigma(a^{t'}_{x'}) = f_t(a^{t'}_{x'})\in U'$ by~\eqref{ftx}.

  \item\label{avoidingxabove}
Our second case is that there is an $\eps_0$ such that $x\notin(a(q),b(q))$ for all $q\in Q_{\eps_0}$. Suppose first that there is even an $\eps_1 < \eps_0$ such that $(x-\eps_1,x+\eps_1)$ avoids $(a(q),b(q))$ for all $q\in Q_{\eps_1}$. Then consider any $x'\in(x-\eps_1,x+\eps_1)$. For every $t'\in[t,t+\eps_1)\sm C$ we have $F(x',t')=F(x',t)$ by~\eqref{ftx}, since $Q^{t'}_{x'}=Q^t_{x'}$. For $t'\in[t,t+\eps_1)\cap C$, say $t'\in (s(q),t(q))$ with $q\in Q_{\eps_1}$, this implies $F(x',t') = F(x',s(q)) = F(x',t)$ by \eqref{localFr} or~\eqref{localFp}, since $x'\notin (a(q),b(q))$ and $s(q)\in[t,t+\eps_1)\sm C$. As $f_t$ is continuous, there is an $\eps_2 < \eps_1$ such that $f_t$ maps $(x-\eps_2,x+\eps_2)$ to~$U$. Then $F(x',t') = F(x',t)\in U$ for all $x'\in (x-\eps_2,x+\eps_2)$ and $t'\in [t,t+\eps_2)$.

We may thus assume that there is no such~$\eps_1$. Then:
\begin{txteq}\label{intmeets}    
For every~$\eps$, the interval $(x-\eps,x+\eps)$ meets $(a(q),b(q))$ for infinitely many $q\in Q_{\eps}$.
\end{txteq}

By~\eqref{intmeets} there is a sequence $q_0,q_1,\dotsc$ of pairs and partitions in $Q_{\eps_0}$ with $\lim_n a(q_n)=x$ or $\lim_n b(q_n)=x$; we assume that $\lim_n a(q_n)=x$. As $\sigma$ is continuous, we have $\lim_n \sigma(a_n)=\sigma(x)$. By~\eqref{imageofabp} and~\eqref{imageofabr}, the sequence $(\sigma(a_n))_{n\in\N}$ is a sequence of vertices and ends, and every vertex appears only finitely often (Lemma~\ref{pass}). Therefore $\sigma(x)$ is an end.

Let us show that $x$ is unchanged at time~$t$. By~\eqref{intmeets}, any interval $(a(q),b(q))$ that contains $x$ meets some $(a(q_n),b(q_n))$, but is not contained in it since $q_n\in Q_{\eps_0}$ and hence $x\in(a(q),b(q))\sm(a(q_n),b(q_n))$. Thus $q>q_n$ by~\eqref{surroundsucceed}, and $t(q)>t(q_n)>t$ by~\eqref{crudeorderALL}, giving $q\notin Q^t_x$ as desired.
As $x\notin (a(q),b(q))$ for every $q\in Q_{\eps_0}$, $x$~remains unchanged at all times $t' \in [t,t+\eps_0]\sm C$.

As $x$ is unchanged at time~$t$, we have $\sigma(x) = F(x,t)\in U'$. As $\sigma$ is continuous, there is an $\eps < \eps_0$ such that $\sigma((x-\eps,x+\eps)) \subset U'$ and $t+\eps \notin C$. Then for every $x'\in[x,x+\eps)$ and $t'\in[t,t+\eps]\setminus C$ we have $x \le a^{t'}_{x'}$ ($\le x'$), because $x$ is unchanged at time $t'$ and hence $x\notin (a(q),b(q))$ for every $q\in Q^{t'}_{x'}$. Likewise, for $x'\in(x-\eps,x]$ we have $x' \le b^{t'}_{x'} \le x$. Thus for every $x'\in(x-\eps,x+\eps)$ and $t'\in[t,t+\eps]\setminus C$ we have $a^{t'}_{x'}\in(x-\eps,x+\eps)$ or $b^{t'}_{x'}\in(x-\eps,x+\eps)$, say $b^{t'}_{x'}\in(x-\eps,x+\eps)$. By~\eqref{unchanged}, $b^{t'}_{x'}$ is unchanged at time~$t'$ and hence also at time $t\le t'$. We thus have $F(x',t') = \sigma(a^{t'}_{x'}) = \sigma(b^{t'}_{x'})=f_t(b^{t'}_{x'}) \in U'$ by~\eqref{ftx}.
  \end{enumerate}
\end{enumerate}

\noindent
This completes the fifth and final part of our proof that $\sigma$ and $\tau$ are homtopic if $\tau$ does not traverse chords.

Before we consider the general case where $\tau$ too travserses chords, let us remind ourselves in which subsets of~$H$ the homotopies considered so far run. In part one of the above proof, we first used~\eqref{straight} to straighten $\sigma$ to a path~$\sigma'$, which we then trimmed further to obtain a path $\sigma''$ satisfying~\eqref{imageofsigma}. This path $\sigma''$ served as $f_0$ for our homotopy~$F$, which ended with a path~$f_1$ in $T$. This path was homotopic to the straightened version $\tau'$ of the original path~$\tau$ (not traversing any chords). We thus found homotopies
 $$\sigma \sim \sigma' \sim f_0 \sim f_1 \sim \tau' \sim \tau\,.$$
The first of these homotopies runs in~$\im\sigma$; see~\eqref{straight}. In the second homotopy we retracted segments $\sigma'\restr[a,b]\sub T$ to paths $\sigma''\restr[a,b]$ with image~$\sigma'(a)T\sigma'(b)$. This is the unique $\sigma'(a)$--$\sigma'(b)$ arc in~$T$, so Lemma~\ref{arc} implies that the image of $\sigma'\restr[a,b]$ contains it. The homotopy between $\sigma'\restr[a,b]$ and~$\sigma''\restr[a,b]$, which \eqref{pathsinT} says runs in the union of the images of those two paths, thus in fact runs in $\im\sigma'\sub\im\sigma$, and hence so does the entire second homotopy $\sigma'\sim\sigma'' = f_0$. The third homotopy, $f_0\sim f_1$, runs in $\im f_0\sub\im\sigma'\sub\im\sigma$ by~\eqref{imageofF}. Similarly, the last homotopy $\tau\sim\tau'$ runs in~$\im\tau$, and the penultimate one, $f_1\sim\tau'$, runs in $\im f_1\cup \im\tau' \sub \im\sigma \cup \im\tau$. All in all, we have shown the following:
\begin{txteq}\label{smallhomotopy}
  If $\tau$ traverses no chords and $w_{\sigma}$ reduces to the empty word, then there is a homotopy in $\im\sigma \cup \im\tau$ between $\sigma$ and $\tau$.
\end{txteq}

To complete our proof of Lemma~\ref{lemma:reducible}, we now consider the case in which both $\sigma$ and $\tau$ traverse chords. By Lemma~\ref{reducedpath}, there are paths $\sigma'$ and~$\tau'$ such that $w_{\sigma'}=r(w_{\sigma})$ and $w_{\tau'}=r(w_{\tau})$, and we may further assume that every pass of $\sigma'$ is also a pass of $\sigma$ while every pass of $\tau'$ is also a pass of~$\tau$. Statement~\eqref{smallhomotopy}, applied to every non-trivial interval $[a,b]$ that is maximal with the property that it avoids the interior of every domain of a pass of~$\sigma'$, yields $\sigma \sim \sigma'$ by Lemma~\ref{infhomotopies}: note that $\sigma\restr [a,b]$ and $\sigma'\restr [a,b]$ have the same first and last point (because $a$ and $b$ are either boundary points of domains of common passes of $\sigma$ and~$\sigma'$ or limits of such points, and $\sigma$ and $\sigma'$ are continuous), and the reduction of $w_\sigma$ to $w_{\sigma'}$ defines a reduction of $w_{\sigma\restriction [a,b]}$ to $w_{\sigma'\restriction [a,b]} = \es$. Likewise, we obtain $\tau \sim \tau'$. It thus suffices to prove $\sigma'\sim\tau'$, from our assumptions that
\begin{equation}\label{interim}
  w_{\sigma'} = r(w_\sigma) = r(w_\tau) = w_{\tau'}
\end{equation}
(cf.\ Lemma~\ref{uniquereduced}).

Our aim is to use~\eqref{samepasses} to obtain the desired homotopy $\sigma'\sim \tau'$. But~\eqref{samepasses} requires that the two paths considered have the same passes, not just the same trace. In order to make~\eqref{samepasses} applicable, we therefore have to `synchronize' corresponding passes of $\sigma'$ and~$\tau'$: make their domains coincide by reparametrizing $\sigma'$ and~$\tau'$, and make $\sigma'$ and~$\tau'$ agree on those domains by applying local homotopies inside the corrsponding chords.

Every path $\alpha\colon [0,1]\to H$ defines a partition of $[0,1]$ into intervals: the interiors of domains of passes, and the (closed) components of the rest of $[0,1]$. The set $\I_{\alpha}$ of all those intervals, including trivial `intervals' $[x,x] = \{x\}$, inherits a linear ordering from~$[0,1]$. The bijection between the passes of $\sigma'$ and $\tau'$ provided by~\eqref{interim} defines an order-preserving bijection $\pi\colon\I_{\sigma'}\to\I_{\tau'}$.

Although $\pi$ maps open to open and closed to closed intervals, it might map non-trivial closed intervals to trivial ones or vice versa. In order to synchronize $\sigma'$ with~$\tau'$ as planned, we therefore have to expand trivial closed intervals to non-trivial ones in our reparametrizations of $\sigma'$ and~$\tau'$. This will be possible, since clearly $\I_{\sigma'}$ and $\I_{\tau'}$ contain only countably many trivial intervals whose corresponding interval in the other set is non-trivial.

We may thus partition $[0,1]$ into a set $\I$ of intervals so that there exist order-preserving bijections $\pi_{\sigma'}\colon \I\to\I_{\sigma'}$ and ${\pi_{\tau'}\colon\I\to\I_{\tau'}}$ that map open to open intervals bijectively, and trivial to trivial intervals, and which commute with $\pi\colon\I_{\sigma'}\to\I_{\tau'}$. We can now define surjective maps $\phi,\psi\colon [0,1]\to [0,1]$ such that $\phi$ maps every $I\in\I$ onto $\pi_{\sigma'}(I)\in\I_{\sigma'}$ and $\psi$ maps every $I\in\I$ onto $\pi_{\tau'}(I)\in\I_{\tau'}$.

Clearly, $\sigma'':=\sigma'\circ\phi$ is homotopic to~$\sigma'$, and $\tau'':= \tau'\circ\psi$ is homotopic to~$\tau'$. So it suffices to show that $\sigma''\sim\tau''$. But these maps now have not only the same trace but also the same domains of corresponding passes. Combining homotopies between corresponding passes inside their respective chords with a homotopy between the rests of $\sigma''$ and~$\tau''$ as in~\eqref{samepasses} yields the desired homotopy $\sigma''\sim\tau''$, by Lemma~\ref{infhomotopies}.
\end{proof}

\bibliographystyle{amsplain}
\bibliography{collective}

\providecommand{\bysame}{\leavevmode\hbox to3em{\hrulefill}\thinspace}
\providecommand{\MR}{\relax\ifhmode\unskip\space\fi MR }
\providecommand{\MRhref}[2]{%
  \href{http://www.ams.org/mathscinet-getitem?mr=#1}{#2}
}
\providecommand{\href}[2]{#2}
\begin{thebibliography}{10}

\bibitem{AbelsStrantzalos}
H.~Abels and P.~Strantzalos, \emph{Proper transformation groups}, in
  preparation.

\bibitem{BauesQuintero}
H.-J. Baues and A.~Quintero, \emph{Infinite {H}omotopy {T}heory}, Kluwer
  Academic Publ., 2001.

\bibitem{LocFinTutte}
H.~Bruhn, \emph{The cycle space of a $3$-connected locally finite graph is
  generated by its finite and infinite peripheral circuits}, J.~Combin.\ Theory
  (Series B) \textbf{92} (2004), 235--256.

\bibitem{Duality}
H.~Bruhn and R.~Diestel, \emph{Duality in infinite graphs}, Comb.,\ Probab.\
  Comput. \textbf{15} (2006), 75--90.

\bibitem{Partition}
H.~Bruhn, R.~Diestel, and M.~Stein, \emph{Cycle-cocycle partitions and faithful
  cycle covers for locally finite graphs}, J.~Graph Theory \textbf{50} (2005),
  150--161.

\bibitem{LocFinMacLane}
H.~Bruhn and M.~Stein, \emph{Mac{L}ane's planarity criterion for locally finite
  graphs}, J.~Combin.\ Theory (Series B) \textbf{96} (2006), 225--239.

\bibitem{CannonConnerER}
J.W. Cannon and G.R. Conner, \emph{The combinatorial structure of the
  {H}awaiian {E}arring group}, Topology Appl. \textbf{106} (2000), 225--271.

\bibitem{ChiswellMueller}
I.M. Chiswell and T.W. M\"uller, \emph{A class of groups with canonical
  $\mathbb{R}$-tree action}, in preparation.

\bibitem{ConnerEda}
G.R. Conner and K.~Eda, \emph{Fundamental groups having the whole information
  of spaces}, Topology Appl. \textbf{146--147} (2005), 317--328.

\bibitem{DiestelBook05}
R.~Diestel, \emph{Graph {T}heory \emph{(3rd edition)}}, Springer-Verlag, 2005,
  \\ Electronic edition available at:\\ {\small\tt
  http://www.math.uni-hamburg.de/home/diestel/books/graph.theory} .

\bibitem{RDsBanffSurvey}
\bysame, \emph{Locally finite graphs with ends: a topological approach},
  Hamburger Beitr.\ Math. \textbf{340} (2009), see\\ {\small\tt
  http://www.math.uni-hamburg.de/math/research/preprints/hbm.html} .

\bibitem{Ends}
R.~Diestel and D.~K{\"u}hn, \emph{Graph-theoretical versus topological ends of
  graphs}, J.~Combin.\ Theory (Series B) \textbf{87} (2003), 197--206.

\bibitem{TST}
\bysame, \emph{Topological paths, cycles and spanning trees in infinite
  graphs}, Europ.\ J.\ Combinatorics \textbf{25} (2004), 835--862.

\bibitem{DiestelLeaderBGC}
R.~Diestel and I.~Leader, \emph{A proof of the {B}ounded {G}raph {C}onjecture},
  Invent.\ math. \textbf{108} (1992), 131--162.

\bibitem{DiestelLeaderNST}
\bysame, \emph{Normal spanning trees, {A}ronszajn trees and excluded minors},
  J.\ London Math.\ Soc. \textbf{63} (2001), 16--32.

\bibitem{Hom1}
R.~Diestel and P.~Spr\"ussel, \emph{The homology of locally finite graphs with
  ends}, Combinatorica (to appear).

\bibitem{EdaInfFreeProd}
K.~Eda, \emph{Free $\sigma$-products and noncommutatively slender groups},
  J.~Algebra \textbf{148} (1991), 243--263.

\bibitem{Freudenthal31}
H.~Freudenthal, \emph{\"{U}ber die {E}nden topologischer {R}{\"a}ume und
  {G}ruppen}, Math.\ Zeitschr. \textbf{33} (1931), 692--713.

\bibitem{Freudenthal42}
\bysame, \emph{Neuaufbau der {E}ndentheorie}, Annals of Mathematics \textbf{43}
  (1942), 261--279.

\bibitem{AgelosFleisch}
A.~Georgakopoulos, \emph{Infinite {H}amilton cycles in squares of locally
  finite graphs}, To appear in {\em Advances in Mathematics}.

\bibitem{halin64}
R.~Halin, \emph{{\"U}ber unendliche {W}ege in {G}raphen}, Math.\ Annalen
  \textbf{157} (1964), 125--137.

\bibitem{ElemTop}
D.W. Hall and G.L. Spencer, \emph{Elementary topology}, John Wiley, New York
  1955.

\bibitem{Hatcher}
A.~Hatcher, \emph{Algebraic {T}opology}, Cambrigde Univ.\ Press, 2002.

\bibitem{HigmanFreeProd}
G.~Higman, \emph{Unrestricted free products and varieties of topological
  groups}, J.\ London Math.\ Soc. \textbf{27} (1952), 73--81.

\bibitem{HughesRanicki}
B.~Hughes and A.~Ranicki, \emph{Ends of complexes}, Cambrigde Univ.\ Press,
  1996.

\bibitem{jung71}
H.A. Jung, \emph{Connectivity in infinite graphs}, Studies in {P}ure
  {M}athematics (L.~Mirsky, ed.), Academic Press, 1971.

\bibitem{KroenEnds}
B.~Kr\"on, \emph{End compactifications in non-locally-finite graphs}, Math.\
  Proc.\ Cambridge Phil.\ Soc. \textbf{131} (2001), 427--443.

\bibitem{RoggiEndsI}
R.~M\"oller, \emph{Ends of graphs}, Math.\ Proc.\ Cambridge Phil.\ Soc.
  \textbf{111} (1992), 255--266.

\bibitem{RoggiEndsII}
\bysame, \emph{Ends of graphs~{II}}, Math.\ Proc.\ Cambridge Phil.\ Soc.
  \textbf{111} (1992), 455--460.

\bibitem{Arboricity}
M.~Stein, \emph{Arboriticity and tree-packing in locally finite graphs},
  J.~Combin.\ Theory (Series B) \textbf{96} (2006), 302--312.

\bibitem{Stillwell80}
J.~Stillwell, \emph{Classical topology and combinatorial group theory},
  Springer-Verlag, 1980.

\bibitem{ThomassenVellaContinua}
C.~Thomassen and A.~Vella, \emph{Graph-like continua, augmenting arcs, and
  {M}enger's theorem}, Combinatorica \textbf{29}, DOI:
  10.1007/s00493-008-2342-9.

\bibitem{ThomassenWoess}
C.~Thomassen and W.~Woess, \emph{Vertex-transitive graphs and accessibility},
  J.~Combin.\ Theory (Series B) \textbf{58} (1993), 248--268.

\bibitem{Willard}
S.~Willard, \emph{General topology}, Dover Publications, 2004.

\bibitem{woessBook}
W.~Woess, \emph{Random walks on infinite graphs and groups}, Cambridge
  University Press, 2002.

\end{thebibliography}

\small
\parindent=0pt
\vskip2mm plus 1fill

\begin{tabular}{cc}
\begin{minipage}[t]{0.5\linewidth}
Reinhard Diestel\\
Mathematisches Seminar\\
Universit\"at Hamburg\\
Bundesstra\ss e 55\\
20146 Hamburg\\
Germany\\
\end{minipage} 
&
\begin{minipage}[t]{0.5\linewidth}
Philipp Spr\"ussel\\
Mathematisches Seminar\\
Universit\"at Hamburg\\
Bundesstra\ss e 55\\
20146 Hamburg\\
Germany\\
\end{minipage}
\end{tabular} 

\end{document}